 \numberwithin{equation}{section}
\newtheorem{thm}{thm}[section]
\newtheorem{theorem}[thm]{Theorem}
\newtheorem{lemma}[thm]{Lemma}
\newtheorem{remark}[thm]{Remark}
\newtheorem{proposition}[thm]{Proposition}
\newtheorem{definition}[thm]{Definition}
\newtheorem{corollary}[thm]{Corollary}
\newtheorem{example}[thm]{Example}
\newtheorem{assumption}[thm]{Assumption}
\newcommand{\ip}[2]{\left\langle#1,#2\right\rangle} 
\newcommand{\KK}{\mathbb{K}}
\newcommand{\NN}{\mathbb{N}}
\newcommand{\EE}{\mathbb{E}}
\newcommand{\PP}{\mathbb{P}}
\newcommand{\DD}{\mathbb{D}}
\newcommand{\p}{\mathcal{P}}
\newcommand{\F}{\mathcal{F}}
\newcommand{\RR}{\mathbb{R}}
\newcommand{\B}{\mathcal{B}}
\newcommand{\K}{\mathcal{K}}
\newcommand{\C}{\mathcal{C}}
\newcommand{\cS}{\mathcal{S}}
\def\tX{\tilde X}
\def\t{\tilde}
\newcommand{\I}{\mathcal{I}}
\newcommand{\Xt}{X^{ \theta}}
\newcommand{\Xd}{X^{x, [\theta]}}
\newcommand{\D}{\mathcal{\mathbf{D}}}
\newcommand{\Vs}{V_0, \ldots, V_d}
\newcommand{\bv}{\boldsymbol v}
\newcommand{\bb}{\boldsymbol \beta}
\newtheorem*{remark*}{Remark}
\newtheorem*{rep@theorem}{\rep@title}
\newcommand{\newreptheorem}[2]{%
\newenvironment{rep#1}[1]{%
 \def\rep@title{#2 \ref{##1}}%
 \begin{rep@theorem}}%
 {\end{rep@theorem}}}
\title{Smoothing properties of McKean-Vlasov SDEs\thanks{This work was partially supported by the Engineering and Physical Sciences Research Council [grant no EP/M506345/1].}}
\author{Dan Crisan\thanks{Department of Mathematics, Imperial College London, 180 Queen's Gate, London
SW7 2AZ, UK}, Eamon McMurray\thanks{Department of Mathematics, Imperial College London, 180 Queen's Gate, London
SW7 2AZ, UK. }}
\begin{document}

\maketitle

\abstract{In this article, we develop integration by parts formulae on Wiener space for solutions of SDEs with general McKean-Vlasov interaction and uniformly elliptic coefficients. These integration by parts formulae hold both for derivatives with respect to a real variable and derivatives with respect to a measure understood in the sense of Lions. They allows us to prove the existence of a classical solution to a related PDE with irregular terminal condition. We also develop bounds for the derivatives of the density of the solutions of McKean-Vlasov SDEs.\\[2mm]
{\bf Keywords}: Integration by Parts Formulae, Malliavin Calculus, McKean-Vlasov SDEs, Kusuoka-Stroock Functions. \\[2mm]
}

\section{Introduction}
\label{sec:intro}

The main object of study in this paper is the McKean-Vlasov stochastic differential equation (MVSDE)
\begin{equation}
\label{eq:trueMKV}
\Xt_t =  \theta + \int_0^t V_0 \left(\Xt_s, \left[\Xt_s \right] \right) \, ds +  \sum_{i=1}^d \int_0^t V_i \left(\Xt_s, \left[\Xt_s \right] \right) \, dB^i_s ,
\end{equation}
driven by a Brownian motion $B= \left(B^1, \ldots, B^d \right)$, with coefficients $V_0, \ldots, V_d: \RR^N \times \p_2(\RR^N) \to \RR^N$ and initial condition $\theta $, a square-integrable random variable independent of $B$. Here and throughout, we denote by $[\xi]$ the law of a random variable $ \xi$ and by $\p_2(\RR^N)$ the set of probability measures on $\RR^N$ with finite second moment.

MVSDEs are equations whose coefficients depend on the law of the solution. They are also referred to as mean-field SDEs and their solutions are often called nonlinear diffusions. 
These MVSDEs provide a probabilistic representation to the solutions of a class of  nonlinear PDEs. A particular example of such nonlinear PDEs was first studied by McKean \cite{mckean}. These equations describe the limiting behaviour of an individual particle evolving within a large system of particles undergoing diffusive motion and interacting in a `mean-field' sense, as the population size grows to infinity. A particular characteristic of the limiting behaviour of the system, is that any finite subset of particles become asymptotically independent of each other.      This \emph{propagation of chaos} phenomenon was studied by McKean \cite{mckean1967propagation} and Sznitman \cite{sznitman1991topics} among many other authors.  Existence and uniqueness results, the theory of propagation of chaos and numerical methods have been studied in a variety of settings (see, for example, \cite{bossy2005some,bossy97astochastic,Jourdain_nonlinearsdes,meleard1996asymptotic}).

 As MVSDEs can be interpreted as limiting equations for  large systems, they are widely  used as models in statistical physics \cite{bossy97astochastic,meleard1996asymptotic} as well as in the study of large-scale social interactions within the theory of mean-field games \cite{lasry2006jeux,lasry2006jeux2,lasry2007mean,HCM1,HCM2, carmonaprobmfg,carmonacontrolmkv}. Recently, these equations have also appeared in the mathematical finance literature in the specification and calibration of multi-factor stochastic volatility and hybrid models \cite{bergomi2008smile,guyon2011smile}.

In this paper,  we develop several new integration by parts formulae for solutions of MVSDE. In turn, these formulae enable us to   use MVSDE to define the solution of  a class of partial differential equations that has the form
\begin{align}
        \label{eq:MasterPDE}
        \begin{split}
                \left( \partial_t - \mathcal{L} \right) U(t,x,[\theta]) = 0 & \quad \quad \text{ for } (t,x,[\theta]) \in (0,T] \times \RR^N \times \p_2(\RR^N)\\
                U(0,x,[\theta]) = g(x,[\theta]) &  \quad \quad \text{ for } (x,[\theta]) \in  \RR^N \times \p_2(\RR^N),
        \end{split}
\end{align}
where $g: \RR^N \times \p_2(\RR^N) \to \RR$ and the operator $\mathcal{L}$ acts on sufficiently enough functions $F:\RR^N \times \p_2(\RR^N) \to \RR^N$  and is defined 
\begin{align*}
        \mathcal{L} F(x,[\theta]) = & \sum_{i=1}^N V_0^i(x,[\theta]) \, \partial_{x_i} F(x,[\theta])
        + \frac{1}{2} \sum_{i,j=1}^N [\sigma \sigma^{\top} (x,[\theta])]_{i,j} \, \partial_{x_i} \partial_{x_j} F(x,[\theta]) \\
        & + \EE \left[ \sum_{i=1}^N V_0^i(\theta,[\theta]) \, \partial_{\mu} F(x,[\theta], \theta)_i +  \frac{1}{2} \sum_{i,j=1}^N [\sigma \sigma^{\top} (\theta,[\theta])]_{i,j} \, \partial_{v_j} \partial_{\mu} F(x,[\theta],\theta)_i \right],
\end{align*}
where $\sigma(z, \mu)$ is the $N \times d$ matrix with columns $V_1(z,\mu), \ldots, V_d(z,\mu)$.
 The last two terms in the description of $\mathcal{L} F(x,[\theta])$ involve the derivative with respect to the measure variable as introduced by Lions in his seminal lectures at the \textit{Coll\`ege de France} (see \cite{Cardaliaguet} for details), which we describe in Section \ref{sec:diff}.
Papers \cite{BFY2, BFY, K} present further details of the relevance of the class of nonlinear partial differential equations \eqref{eq:MasterPDE}

For linear parabolic PDEs on $[0,T] \times \RR^N$ it is well known from classical works such as \cite{friedmanparabolic, Hormander1967} that under uniform ellipticity or H\"ormander condition, there exist classical solutions even when the initial condition is not differentiable. In this paper, we explore to what extent the same is true for the PDE \eqref{eq:MasterPDE} under a uniform ellipticity assumption. That is, we consider the question of whether the PDE \eqref{eq:MasterPDE} has classical solutions when the initial condition $g$ is \emph{not} differentiable.
For this we exploit a probabilistic representation for the classical solution
\footnote{Because of the new requirement that the solution is differentiable in the measure direction, the notion of a classical solution \\
of \eqref{eq:MasterPDE} needs to be clarified. We do this in Definition \ref{def:classical} below. 
}
of the PDE \eqref{eq:MasterPDE} given in terms of a functional of $\Xt_t$ and of the solution of the following de-coupled equation:
\begin{equation}
\label{eq:MKVdecoupled}
\Xd_t = x + \int_0^t V_0 \left(\Xd_s, \left[\Xt_s \right] \right) \, ds + \sum_{i=1}^d \int_0^t V_i \left(\Xd_s, \left[\Xt_s \right] \right) \, dB^i_s .
\end{equation}
We say that this equation is de-coupled as the law appearing in the coefficients is $\left[\Xt_s \right]$ (the solution of equation \eqref{eq:trueMKV}), rather than the law of $\Xd_t $, the solution to equation \eqref{eq:MKVdecoupled} itself\footnote{Equation \eqref{eq:MKVdecoupled} is therefore not an MVSDE.}.
In the following, we show that, for a certain class of functions  $g: \RR^N \times \p_2(\RR^N) \to \RR$ (not necessarily smooth), the function
\begin{equation}
\label{eq:Udef}
U(t,x,[\theta]) := \EE  \, g \left(\Xd_t,[ \Xt_t ]\right) \quad \mathrm{for} \: (t,x,[\theta]) \in 
[0,T] \times \RR^N \times \p_2(\RR^N)
\end{equation}
solves the PDE \eqref{eq:MasterPDE}. 
A similar  result has been proved  in   \cite{buckdahnmean,crischassdel} under different conditions than ours and for an initial condition $g$ that is  sufficiently smooth.  

For the stochastic flow  $(X_t^x)_{t \geq 0}$ solving a classical SDE with initial condition $x \in \RR^N$, the standard strategy to show that the function $u(t,x):=\EE \, g(X_t^x)$ is a classical solution 
of a linear PDE is to show, using the flow property of $X_t^x$, that for $h>0$, $u(t+h,x)=\EE \, [u(t,X_h^{x})]$ and then show that $u$ is regular enough to apply It\^{o}'s formula to $ u(t,X_h^{x})$. Expanding this process using It\^{o}'s formula and sending $h \to 0$
 shows that $u$ does indeed solve the related PDE. For MVSDEs, one can develop a similar approach. In this setting, to expand a function depending not only on the process $(\Xd_t )_{t \geq 0}$ (where we can use the usual It\^{o} formula) but also on the flow of measures $\left([\Xt_t] \right)_{t \geq 0}$, we require an extension of the classical chain rule and we use here the chain rule proved in \cite{crischassdel}. Our main focus is therefore to provide conditions under which $U$, defined in \eqref{eq:Udef}, is regular enough to apply the It\^{o} formula and the extended chain rule.

For a general Lipschitz continuous function $g:\RR^N \times \p_2(\RR^N) \to \RR$, we cannot expect for the mapping   $(x,[\theta]) \mapsto \EE [ \, g (\Xd_t,[ \Xt_t ]) ]$ to be differentiable (for a fixed $t>0$) even when the coefficients in the equation for $\Xd_t$ are smooth and uniformly elliptic. This is shown in Example \ref{ex:counter}. We are, however, able to identify a class of non-smooth initial conditions (including interesting examples, see Example \ref{ex:ICs}) for which we can develop integration by parts formulas and establish sufficient smoothness of the associated function $U$. For $g$ in this class, we use Malliavin calculus to show that    
$(x,[\theta]) \mapsto \EE [ \, g (\Xd_t,[ \Xt_t ]) ]$ is differentiable. The differentiability in the measure direction is somewhat surprising since there is no noise added in the measure direction, and this smoothing property seems to be new.
We give further details of our results in the next section.

\subsection{Outline \& Main Results}

In Section 2, we introduce the notation and the basic results related to MVSDEs.  In particular, when describing the smoothness of the coefficients in equations \eqref{eq:trueMKV} and \eqref{eq:MKVdecoupled} in our assumptions, we  introduce the notation $\C^{k,k}_{b,\text{Lip}}(\RR^N\times \p_2(\RR^N);\RR^N)$
for functions $k$-times differentiable with bounded, Lipschitz derivatives, which we introduce precisely in Section \ref{sec:diff}. Similarly, we use the notation $\KK^q_r(E,M)$ to denote processes taking values in a Hilbert space $E$ which are smooth in both Euclidean and measure variables as well as in the Malliavin sense and $M$ denotes how many times the process can be differentiated. This class, which we call the class of Kusuoka-Stroock processes, is introduced in Section \ref{sec:introKSPprob}.
  The class represents a generalization of the class of processes introduced in \cite{KusStrIII} and  analysed in \cite{crisan2010cubature}.

In Section \ref{sec:regsoln}, we prove some results on the differentiability of $\Xd_t$, the solution to equation \eqref{eq:MKVdecoupled}, with respect to the parameters $(x,[\theta])$.
The main result of Section \ref{sec:regsoln} is Theorem \ref{th:XisKSP}, which says that if $\Vs \in \C^{k,k}_{b, Lip}(\RR^N \times \p_2(\RR^N);\RR^N)$, then $(t, x, [\theta]) \mapsto X_t^{x, [\theta]} \in \KK^1_{0}(\RR^N,k)$. This is proved in the Appendix \ref{sec:KSpfs}. We then introduce the uniform ellipticity assumption (UE) in Assumption \ref{ass:UE}, used throughout the rest of the paper. The rest of the section details several corollaries, where we analyse the processes that will  play the r\^ole of Malliavin weights in the integration by parts formulas and identify the class $\KK^q_r(E,M)$ of Kusuoka-Stroock processes to which they belong.

With the main technical results complete, in Section \ref{sec:IBPdecoupled} we develop integration by parts formulas for derivatives of $(x,[\theta]) \mapsto \EE f(\Xd_t)$ under (UE) and the assumption that $\Vs \in \C^{k,k}_{b,\text{Lip}}(\RR^N\times \p_2(\RR^N);\RR^N)$. We do this for derivatives with respect to $x$ and with respect to $\mu$. In particular we show that (see Propositions \ref{th:IBPx1} and \ref{th:IBPmu1}),         for $f \in \C^{\infty}_b(\RR^N;\RR)$, $\Psi \in \KK^q_r(\RR,n)$ and for $|\alpha| + |\beta| \leq [n \wedge (k-2)]$,
 we have
\begin{eqnarray*}
        \partial^{\alpha}_x \,  \EE[(\partial^{\beta} f)(\Xd_t)\, \Psi(t,x, [\theta])]& =& t^{-(|\alpha|+ |\beta|)/2} \, \EE [f(\Xd_t)\, I^3_{\alpha}\left(I^2_{\beta}(\Psi)\right)(t,x, [\theta])] ,\\
 \partial^{\beta}_{\mu} \,       \EE[(\partial^{\alpha}f)(\Xd_t)\, \Psi(t,x, [\theta])](\bv) &= & t^{-(|\alpha|+|\beta|)/2} \EE [f(\Xd_t)\, \I^3_{\beta}\left(I^2_{\alpha}(\Psi)\right)(t,x, [\theta], \bv)] ,
      \end{eqnarray*}        
        where $I^3_{\alpha}\left(I^2_{\beta}(\Psi)\right)$ and $\I^3_{\alpha}\left(I^2_{\beta}(\Psi)\right)$  are defined is defined in Section \ref{sec:IBPFinx} and   $I^3_{\alpha}\left(I^2_{\beta}(\Psi)\right) \in \KK_r^{q+2|\alpha|+3|\beta|}(\RR, m)$ and 
$\I^3_{\alpha}\left(I^2_{\beta}(\Psi)\right) \in \KK_r^{q+4|\alpha|+3|\beta|}(\RR, m),$ where $m=[n \wedge (k-2)]-|\alpha|-|\beta|$. 
\noindent We also consider integration by parts formulas for derivatives of the function
$
x \mapsto \EE f(X_t^{x, \delta_x})$ (see Theorem \ref{th:MKVIBP}).

In Section \ref{sec:masterPDE}, we return our attention to the PDE \eqref{eq:MasterPDE}. In Definition \ref{def:IC}, we introduce the class $\textbf{(IC)}$  of non-differentiable initial conditions $g$ for which we are able to prove 
$(x,[\theta]) \mapsto \EE [ \, g (\Xd_t,[ \Xt_t ]) ]$
 is differentiable. We do this by extending the integration by parts formulas of Section \ref{sec:IBPdecoupled} to cover this class. Then, for $g$ in this class and assuming uniform ellipticity, and the coefficients $\Vs \in \C^{3,3}_{b,Lip}(\RR^N \times \p_2(\RR^N); \RR^N)$ (and possibly bounded depending on the exact form of $g$) we are able to prove the  existence and uniqueness of solutions to the PDE  \eqref{eq:MasterPDE}. In particular, we show (see Theorem \ref{th:masterPDE}) that 
function $U$, defined in \eqref{eq:Udef},
is a classical solution of the PDE \eqref{eq:MasterPDE}. Moreover, $U$
        is unique among all of the classical solutions satisfying the polynomial growth condition $\left|U(t,x,[\theta])\right| \leq C (1+|x|+\|\theta\|_2)^q$ for some $q>0$ and all $(t,x,[\theta]) \in 
[0,T] \times \RR^N \times \p_2(\RR^N)$.

Finally, in Section \ref{sec:densities}, we apply the integration by parts formulae to the study of the density function of $X_t^{x,\delta_x}$. We study the smoothness of the density function and obtain estimates on its derivatives.
The main result (See Theorem \ref{th:densityestimate}) states that, under  suitable  conditions,  $X_t^{x,\delta_x}$ has a density $p(t,x, z)$ such that $(x,z) \mapsto p(t,x, z)$ is differentiable a number of times dependent on the regularity of the coefficients. Indeed, when these derivatives exist, there exist a constant $C$ such that 
        \begin{eqnarray*}             
              | \partial_x^{\alpha} \, \partial_z^{\beta}  p(t,x,z) | \leq C \, (1+ |x|)^{\mu} \,  t^{- \nu}  ,
        \end{eqnarray*}
        where $ \mu = 4|\alpha|+ 3 |\beta| + 3 N$ and  $ \nu = \textstyle \frac{1}{2} (N + | \alpha| + | \beta | )$. Moreover, if $\Vs$ are bounded then the following Gaussian type estimate holds
        \begin{eqnarray*}
                | \partial_x^{\alpha} \, \partial_z^{\beta}  p(t,x,z) | \leq C \,  t^{- \nu} \, \exp \left(- C \, \frac{|z-x|^2}{t} \right) .
        \end{eqnarray*}

\subsection{Comparison with other works}
\label{sec:comparison}

As mentioned previously, the PDE \eqref{eq:MasterPDE} is also studied in \cite{buckdahnmean} and \cite{crischassdel}. Let us explain the relationship between the results in those works and the results in this paper. 

In \cite{buckdahnmean}, the authors prove that derivatives of $(x,[\theta]) \mapsto \Xd_t$ exist up to second order. We also prove this as part of Theorem \ref{th:XisKSP}, although we extend this to derivatives of any order (assuming sufficient smoothness of the coefficients). In \cite{buckdahnmean}, the hypotheses on the continuity and differentiability of the coefficients are the same as ours
The authors then consider initial conditions $g:\RR^N \times \p_2(\RR^N) \to \RR^N$ for which
the derivatives up to second order exist and are bounded, which they use to prove regularity of $U$. Since $g$ is sufficiently smooth, they do not need to impose any non-degeneracy condition on the coefficients. In our work we remove the constraint on the smoothness of $g$ at the expense of assuming non-degeneracy condition on the coefficients of the MVSDEs. In this sense, their results are complementary to ours.

The paper \cite{crischassdel} has a completely different scope. The authors are interested in a nonlinear PDE on $[0,T]\times \RR^N \times \p_2(\RR^N)$, called the master equation in reference to the theory of mean-field games. The PDE we consider is a special case of this, although again they assume that the function $g$ is twice differentiable. Their strategy for proving regularity of $U$ is also different. In their setting, the authors prove that derivatives of the \emph{lifted} flow $\RR^N \times L^2(\Omega) \ni (x,\theta) \mapsto \Xd_t$ exist up to second order (with derivatives in the variable $\theta$ being Fr\'echet derivatives on the Hilbert space $L^2(\Omega)$) where $\Xd_t$ is the forward component in a coupled forward-backward system. They use this result, along with sufficient smoothness of $g$, to prove that the lifted function $\widetilde{U}$, defined on on $[0,T]\times \RR^N \times L^2(\Omega)$ is sufficiently regular in the Fr\'echet sense. They then prove a result which allows them to recover regularity of the second order derivatives of $U$ from properties of the second order Fr\'echet derivatives of $\widetilde{U}$.
Using their strategy, the authors of \cite{crischassdel} are able to impose hypotheses which only involve conditions on derivatives of the coefficients $\partial_{\mu}V_i(x,[\theta],v)$ evaluated at $v=\theta \in L^2(\Omega)$. 

This is in contrast to our assumptions which impose conditions on $\partial_{\mu}V_i(x,[\theta],v)$ for all $(x,[\theta],v) \in \RR^N \times \p_2(\RR^N) \times \RR^N$.

More recently, two other works \cite{Banos, CH} give some partial results related to the smoothness of the solutions of McKean-Vlasov SDEs. In \cite{Banos}, the Malliavin differentiability of McKean-Vlasov SDEs is studied using a stochastic perturbation approach of Bismut type. In \cite{CH}, the strong well-posedness of  a McKean-Vlasov SDEs is proven when the diffusion matrix
is Lipschitz with respect to both the space and measure arguments and uniformly elliptic and the
drift is bounded in space and H\"{o}lder continuous in the measure direction. 
Both works restrict themselves to the particular case when the coefficient dependence on the law of the  solution is of scalar type. We obtain some related results in \cite{CM}, under the same scalar dependence restriction, but under the more general H\"{o}rmander condition.

We base our results on the use of Malliavin calculus techniques. The new integration by parts formulae and, more importantly, the identification of the processes  appearing in these formulae as Kusuoka-Stroock processes is key to our analysis. The use of Kusuoka-Stroock processes is a very versatile tool. Not only that it enables us to identify the solution of the PDE \eqref{eq:MasterPDE}, but the also allows us to study the density of $X_t^{x,\delta_x}$ and obtain both polynomial and Gaussian local bounds  for their derivatives. We are not aware of similar bounds obtained elsewhere in the literature for densities of solutions of MVSDEs.

\section{Preliminaries}
\label{sec:prelim}

\subsection{Notation \& Basic Setup }
\label{sec:introprob}
We work on a filtered probability space $(\Omega, \F, \mathbb{F}= \{\F_t\}_{t \in [0,T]} , \PP )$ which supports an $\mathbb{F}$-adapted $d$-dimensional Brownian Motion, $B=(B^1, \ldots, B^d)$. We also often denote $B^0(s)=s$ for $s \in [0,T]$. We assume that there is a sufficiently rich sub-$\sigma$-algebra $\mathcal{G} \subset \F$ independent of $B$ such that all measures $\mu \in \p_2(\RR^N)$ correspond to the law of a random variable in $L^2((\Omega,\mathcal{G}, \PP) ;\RR^N)$. Then, we define $\mathbb{F}$ to be the filtration generated by $B$, completed and augmented by $\mathcal{G}$. This is to ensure that in the sequel when we consider processes starting from arbitrary initial conditions $ \theta \in L^2(\Omega;\RR^N)$ these processes will be $\mathbb{F}$-adapted. We denote the $L^p$ norm on $(\Omega, \F,\PP)$ by $\| \cdot \|_p $ and we also introduce the space $\cS^p_T$ of continuous $\mathbb{F}$-adapted processes $\varphi$ on $[0,T]$,  satisfying
\begin{align*}
&\left\| \varphi \right\|_{\cS^p_T} = \left(\EE  \sup_{s \in [0,T]} | \varphi_s|^p \right)^{1/p}<\infty.
\end{align*}
In addition to the probability space $(\Omega, \F , \PP )$, we will also make use of other probability spaces $( \t\Omega, \t \F, \t \PP)$ and $(\widehat{\Omega}, \widehat{\F}, \widehat{\PP})$ when performing the lifting operation associated with the Lions derivative. We assume that these satisfy the same conditions as $(\Omega, \F, \PP)$. We denote the $L^p$ norm on each of these spaces by $\| \cdot \|_p $ unless we want to emphasise which space we are working on, in which case we use $\| \cdot \|_{L^p(\widetilde{\Omega})} $ etc.
We use $| \cdot |$ to denote the Euclidean norm. Throughout we denote by $\alpha $ and $\beta$ multi-indices on $\{1, \ldots, N\}$ including the empty multi-index. We denote by $Id_N$ the $N \times N$ identity matrix. We also use some terminology from Malliavin calculus: we denote by $\D$ the Malliavin derivative and by $\delta$ its adjoint, the Skorohod integral.
We outline very briefly the basic operators of Malliavin calculus in Appendix \ref{sec:introMalCal}.

\subsection{Basic results on McKean-Vlasov SDEs}
\label{sec:introMKV}

We study McKean-Vlasov SDEs with general Lipschitz interaction. The coefficients are functions from $\RR^N \times \p_2(\RR^N)$ to $\RR^N$, where $\p_2(\RR^N)$ denotes the space of probability measures on $\RR^N$ with finite second moment. We equip this space with the $2$-Wasserstein metric, $W_2$. For a general metric space $(M,d)$, we define the $2$-Wasserstein metric on $\p_2(M)$ by
\begin{align*}
        W_2(\mu, \nu) = \inf_{\Pi \in \p_{\mu,\nu}} \left(\int_{M \times M} d(x,y)^2 \, \Pi(dx,dy) \right)^{1/2},
\end{align*}
where $\p_{\mu,\nu}$ denotes the set of measures on $M \times M$ with marginals $\mu$ and $\nu$. When we refer to the Lipschitz property of the coefficients, it is with respect to product distance on $\RR^N \times \p_2(\RR^N)$.
\begin{proposition}[Existence, Uniqueness and $L^p$ estimates]
        \label{prop:MKVSDE}
        Suppose that $\theta \in L^2( \Omega)$ and $\Vs$ are uniformly Lipschitz continuous, then there exists a unique, strong solution to the equation
        \begin{equation}
        \label{eq:MKVSDE}
        \Xt_t =  \theta +  \sum_{i=0}^d \int_0^t V_i \left(\Xt_s, \left[\Xt_s \right] \right) \, dB^i_s ,
        \end{equation}
        and there exists a constant $C=C(T)$, such that
        \begin{equation}
        \label{eq:MKVmoment}
        \| X^{\theta} \|_{ \cS^2_T} \leq C  \, \left( 1+  \| \theta \|_2 \right).
        \end{equation}
        Similarly, there exists a unique, strong solution to the equation
        \begin{equation}
        \label{eq:decoupled}
        \Xd_t = x  + \sum_{i=0}^d \int_0^t V_i \left(\Xd_s, \left[\Xt_s \right] \right) \, dB^i_s ,
        \end{equation}
        and there exists a constant $C=C(p,T)$, such that for all $p \geq 1$,
        \begin{equation}
        \label{eq:decoupledmoment}
        \| \Xd \|_{\cS^p_T} \leq C  \, \left( 1+ |x|+ \| \theta\|_2  \right).
        \end{equation}
        Moreover, for all $(x, \theta, t), (x', \theta', t') \in \RR^N \times L^2(\Omega) \times [0,T]$ and $p \geq 1$,
        \begin{equation}
        \label{eq:MKVLpreg}
        \left\| X^{x, [\theta]}  - X^{x', [\theta']} \right\|_{\cS^p_T} \leq C \,  \left( |x-x'| + \|\theta - \theta'\|_2  \right) ,
        \end{equation}
        and 
        \begin{equation}
        \label{eq:MKVLpregtime}
        \left\| X^{x, [\theta]}_t  - X^{x, [\theta]}_{t'} \right\|_p \leq C \, (1+|x|+\|\theta\|_2) \,  |t-t'|^{\frac{1}{2}} .
        \end{equation}
        Finally, we have the following flow property for any $t \in [0,T) $, $s \in (t,T]$,  $x \in \RR^N$ and $\theta \in L^2(\Omega)$,
        \[
        \left( X_{t+s}^{x,[\theta]}, X_{t+s}^{\theta} \right) = \left( X_s^{X_{t}^{x,[\theta]},[X_{t}^{[\theta]}]}, X_s^{X_t^{\theta}} \right) \quad \PP-a.s.
        \]
\end{proposition}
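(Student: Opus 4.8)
The plan is to treat the five assertions in sequence, with the existence and uniqueness of the genuinely nonlinear equation \eqref{eq:MKVSDE} as the crux; the decoupled equation \eqref{eq:decoupled} and all of the estimates then follow by reduction to classical SDE theory together with Gr\"onwall and Burkholder--Davis--Gundy (BDG) arguments.

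For \eqref{eq:MKVSDE} I would run a Picard iteration directly on the process space $\cS^2_T$. Setting $X^0 \equiv \theta$ and defining $X^{n+1}$ as the solution of the standard (non-McKean) SDE obtained by freezing the law in the coefficients at $[X^n_s]$, each $X^{n+1}$ exists and is unique by the classical theory, since $V_i(\cdot, [X^n_s])$ is Lipschitz in the spatial variable. Writing $u_n(t) := \EE \sup_{s \le t} |X^{n+1}_s - X^n_s|^2$ and applying BDG to the difference of two consecutive iterates, the Lipschitz hypothesis in both arguments gives
\[
u_n(t) \le C \int_0^t \left( \EE|X^n_s - X^{n-1}_s|^2 + W_2([X^n_s],[X^{n-1}_s])^2 \right) ds.
\]
The essential point---specific to McKean--Vlasov equations---is the synchronous-coupling bound $W_2([X^n_s],[X^{n-1}_s])^2 \le \EE |X^n_s - X^{n-1}_s|^2$, obtained by using the joint law of the pair $(X^n_s, X^{n-1}_s)$ on the same probability space as an admissible coupling. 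This collapses the estimate to $u_n(t) \le 2C \int_0^t u_{n-1}(s)\,ds$, whence $u_n(T) \le (2CT)^n u_0(T)/n!$ and the iterates form a Cauchy sequence in $\cS^2_T$. The limit solves \eqref{eq:MKVSDE}, and the same coupling argument applied to the difference of two solutions yields uniqueness via Gr\"onwall. The moment bound \eqref{eq:MKVmoment} comes from applying BDG and Gr\"onwall to the equation itself, using the linear growth of the $V_i$ (a consequence of the Lipschitz property) and $W_2([\Xt_s],\delta_0) \le \|\Xt_s\|_2$.

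With the law flow $([\Xt_s])_{s\in[0,T]}$ now fixed, equation \eqref{eq:decoupled} is a classical SDE with spatially-Lipschitz, time-dependent coefficients $V_i(\cdot,[\Xt_s])$, so existence and uniqueness are immediate; the $L^p$ estimate \eqref{eq:decoupledmoment} follows from BDG and Gr\"onwall at exponent $p$, where the inhomogeneous term is controlled by $|V_i(0,[\Xt_s])| \le C(1 + \|\Xt_s\|_2) \le C(1 + \|\theta\|_2)$ via \eqref{eq:MKVmoment}. For the Lipschitz regularity \eqref{eq:MKVLpreg} I would first establish the analogous bound for the nonlinear flow, $\|\Xt - X^{\theta'}\|_{\cS^p_T} \le C\|\theta - \theta'\|_2$, again by BDG--Gr\"onwall with the synchronous-coupling bound controlling the difference of the two law flows; this simultaneously controls $W_2([\Xt_s],[X^{\theta'}_s])$ appearing when comparing the two decoupled equations, so a final Gr\"onwall argument in the variables $(x,\theta)$ and $(x',\theta')$ closes \eqref{eq:MKVLpreg}. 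The time regularity \eqref{eq:MKVLpregtime} is obtained directly from the integral form $X^{x,[\theta]}_{t'} - X^{x,[\theta]}_t = \sum_{i=0}^d \int_t^{t'} V_i(X^{x,[\theta]}_s,[\Xt_s])\,dB^i_s$ by BDG, linear growth, and the moment bound \eqref{eq:decoupledmoment}, which yields the factor $(1+|x|+\|\theta\|_2)\,|t-t'|^{1/2}$.

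Finally, the flow property follows from uniqueness. The pair $(\Xt, \Xd)$ restarted at time $t$ from $(X_t^{x,[\theta]}, X_t^\theta)$ satisfies, on $[t, T]$, the same coupled system as $(X_s^{X_t^{x,[\theta]},[X_t^{[\theta]}]}, X_s^{X_t^\theta})$; the key consistency input is that the law flow of the nonlinear component is itself determined by uniqueness, giving $[X^\theta_{t+\cdot}] = [X_\cdot^{X_t^{[\theta]}}]$, after which the strong uniqueness established above forces the two representations to coincide $\PP$-a.s. I expect the main obstacle to be the existence step: verifying that the synchronous-coupling estimate genuinely closes the Picard iteration without circularity, and ensuring measurability and $\mathbb{F}$-adaptedness of each iterate given that the initial condition $\theta$ is only assumed $\mathcal{G}$-measurable and square-integrable rather than deterministic.
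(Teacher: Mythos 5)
Your proposal is correct and is precisely the standard Picard-iteration/synchronous-coupling argument (with BDG and Gr\"onwall) that the paper itself declines to write out, stating only that ``the proof is standard'' and citing Sznitman and Jourdain, where this same fixed-point scheme with the coupling bound $W_2([X],[Y])^2 \le \EE|X-Y|^2$ is carried out. All the steps you outline --- including the reduction of the decoupled equation to a classical time-inhomogeneous SDE and the derivation of the flow property from strong uniqueness --- match the intended route.
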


\begin{proof}
        The proof is standard and we leave it to the reader. We note that the proof of existence and uniqueness of a solution to equation \eqref{eq:MKVSDE} was proved in \cite{sznitman1991topics} for first-order McKean-Vlasov interaction. The case of a generic Lipschitz McKean-Vlasov interaction is covered in \cite{Jourdain_nonlinearsdes}.  
\end{proof}


\subsection{Differentiation in $\p_2(\RR^N)$}
\label{sec:diff}

In Section \ref{sec:masterPDE}, we study an SDE with a general McKean-Vlasov dependence. We will be interested in differentiability of the stochastic flow associated to this SDE and an associated PDE on $[0,T] \times \RR^N \times \p_2(\RR^N)$. We thus need a notion of derivative for a function on a space of probability measures.
The notion of differentiability we use was introduced by P.-L. Lions in his lectures at the \emph{Coll\`ege de France}, recorded in a set of notes by Cardaliaguet \cite{Cardaliaguet}. The underlying idea is very well exposed in \cite{carmonacontrolmkv}, which we draw on here.

Lions' notion of differentiability is based on the \emph{lifting} of functions $U: \p_2(\RR^N)\to \RR$ 
into functions $\t U$  defined on the Hilbert space $L^2(\t \Omega;\RR^N)$ over some probability space $(\t\Omega,\t\F,\t\PP)$, $\tilde{\Omega}$ being a Polish space and $\t \PP$ an atomless measure, by setting $\t U(\tX)=U([\tX])$ for $\tX \in L^2(\t\Omega;\RR^N)$. Then, a function $U$
is said to be differentiable  at $\mu_0\in \p_2(\RR^N)$ if there exists a random variable $\t X_0$ with law $\mu_0$ such that the lifted function $\t U$ is Fr\'echet differentiable at $\t X_0$. 
Whenever this is the case, the Fr\'echet derivative of $\t U$ at $\t X_0$ can be viewed as an element of $L^2(\t\Omega;\RR^N)$ by identifying $L^2(\t\Omega;\RR^N)$ and its dual. The derivative in a direction $\t\gamma \in L^2(\t\Omega;\RR^N)$ is given by
\[
D \tilde{U}(\t X_0) (\t \gamma) = \langle D \tilde{U} (\t X_0), \t \gamma \rangle_{L^2(\t\Omega;\RR^N)} = \widetilde{\EE} \left[ D \tilde{U} (\t X_0) \cdot \t \gamma \right].
\]
It then turns out (see Section 6 in \cite{Cardaliaguet} for details.) that the distribution of $D \tilde{U} (\t X_0) \in L^2(\t\Omega;\RR^N)$ depends only upon the law $\mu_0$ and not upon the particular random variable $ \t X_0$ having distribution $\mu_0$.  
It is shown in \cite{Cardaliaguet} that, as a random variable, $D \tilde{U} (\t X_0)$ is of the form $ g_{\mu_0}( \t X_0)$, where $ g_{\mu_0} : \RR^N \rightarrow \RR^N$ is a deterministic measurable function which is uniquely defined $\mu_0$-almost everywhere on $\RR^N$, and is square-integrable with respect to the measure $\mu_0$. We call $\partial_{\mu}U(\mu_0):=g_{\mu_0}$ the derivative of $U$ at $\mu_0$. We use the notation
$\partial_{\mu} U(\mu_{0}, \cdot ) : \RR^N \ni v \mapsto \partial_{\mu} U(\mu_{0},v)
\in \RR^N$, which satisfies, by definition,
\[
D \t U(\t X_0) = g_{\mu_0}(\t X_0) =: \partial_{\mu}U(\mu_0, \t X_0).
\]
This holds for any random variable $\t X_{0}$ with distribution $\mu_0$, irrespective of the probability space on which it is defined.

In the sequel, we will consider functions which are differentiable globally on $ \p_2(\RR^N)$. Moreover, we will consider functions where for each $\mu \in \p_2(\RR^N)$, there exists a version of the derivative $\partial_{\mu}U(\mu)$ which is assumed to be a priori continuous as a function
\[
\p_2(\RR^N ) \times \RR^N \ni(\mu,v) \mapsto \partial_{\mu}U(\mu,v) \in \RR^N.
\]
In this case such a version is unique since, for each  $\theta \in L^2(\Omega; \RR^N)$, $\partial_{\mu}U([\theta],v)$ is defined $[\theta](dv)$-a.e., so taking a Gaussian random variable $G$ independent of $\theta$, and $\epsilon >0$, $\partial_{\mu}U([\theta+\epsilon G],v)$ is defined $(dv)$-a.e. and taking $\epsilon \to 0$ and using the continuity of $\partial_{\mu}U$, identifies $\partial_{\mu}U([\theta],v)$ uniquely. We show how this definition works in practice in Examples \ref{ex:scalar} and \ref{ex:first}.

For a function $f: \p_2(\RR^N) \to \RR^N$, we can straightforwardly apply the above discussion to each component of $f=(f^1, \ldots, f^N)$.  To extend to higher derivatives we note that $\partial_{\mu} f^i $ takes values in $\RR^N£$, so we denote its components by $ ( \partial_{\mu} f^i)_j : \p_2(\RR^N) \times \RR^N \to \RR$ for $j=1, \ldots, N$ and, for a fixed $v \in \RR^N$, we can discuss again the differentiability of $ \p_2(\RR^N) \ni \mu \mapsto (\partial_{\mu} f^i)_j(\mu,v) \in \RR$. If the derivative of this function exists and there is continuous version of
\[
\p_2(\RR^N) \times \RR^N \times \RR^N \ni       (\mu,v_1,v_2) \mapsto \partial_{\mu}( \partial_{\mu} f^i)_j(\mu,v_1,v_2) \in \RR^N,
\]
then it is unique. It makes sense to use the multi-index notation $\partial^{(j,k)}_{\mu} f^i: = ( \partial_{\mu}( \partial_{\mu} f^i)_j)_k$.
Similarly, for higher derivatives, if for each $(i_0, \ldots, i_n) \in \{1, \ldots, N\}^{n+1}$, 
\[
\underbrace{\partial_{\mu}(\partial_{\mu} \ldots (\partial_{\mu} }_{n \text{ times}} f^{i_0})_{i_1} \ldots)_{i_n}
\]
exists, we denote this $\partial^{\alpha}_{\mu}f^{i_0}$ with $\alpha = (i_1, \ldots, i_n)$. 
Now, each derivative in $\mu$ is a function of an `extra' variable, so $\partial^{\alpha}_{\mu}f^{i_0}: \p_2(\RR^N) \times (\RR^N)^n \to \RR$. We always denote these variables, by $v_1, \ldots, v_n$, so 
\[
\p_2(\RR^N) \times (\RR^N)^n \ni (\mu, v_1, \ldots, v_n)  \mapsto \partial^{\alpha}_{\mu}f^{i_0}(\mu, v_1, \ldots, v_n)  \in \RR .
\]
When there is no possibility of confusion, we will abbreviate $(v_1, \ldots, v_n)$ to $\bv$, so that
\[
\partial^{\alpha}_{\mu}f^{i_0}(\mu, \bv)  = \partial^{\alpha}_{\mu}f^{i_0}(\mu, v_1, \ldots, v_n).
\]
For $\bv=(v_1, \ldots, v_n) \in (\RR^N)^n$, we will denote
\[
| \bv | := |v_1| + \ldots + |v_n|,
\]
with $|\cdot|$ the Euclidean norm on $\RR^N$.
It then makes sense to discuss derivatives of the function $\partial^{\alpha}_{\mu}f^{i_0}$ with respect to the variables $v_1, \ldots, v_n$.
If, for some $j \in \{1, \ldots, N\}$ and all $(\mu, v_1, \ldots,v_{j-1}, v_{j+1}, \ldots,  v_n) \in \p_2(\RR^N) \times (\RR^N)^{n-1}$,
\[
\RR^N \ni v_j \mapsto \partial^{\alpha}_{\mu}f^{i_0}(\mu, v_1, \ldots, v_n)
\]
is $l$-times continuously differentiable, we denote the derivatives $\partial_{v_j}^{\beta_j}\partial^{\alpha}_{\mu}f^{i_0}$, for $\beta_j$ a multi-index on $\{1, \ldots, N\}$ with $|\beta_j| \leq l$. Similar to the above, we will denote by $\bb$ the $n$-tuple of multi-indices $(\beta_1,\ldots, \beta_n)$. We also associate a length to $\bb$ by
\[ 
|\bb|:=|\beta_1|+ \ldots +|\beta_n|,
\]
and denote $\# \bb:=n$. Then, we denote by $\B_n$ the collection of all such $\bb$ with $\# \bb:=n$, and $\B:= \cup_{n \geq 1} \B_n$. Again, to lighten notation, we will use
\[
\partial^{\bb}_{\bv} \partial^{\alpha}_{\mu}f^i (\mu, \bv):=    \partial_{v_{n}}^{\beta_{n}} \ldots \partial^{\beta_1}_{v_1} \partial^{\alpha}_{\mu}f^i(\mu, v_1, \ldots, v_n).
\]
The coefficients in equations \eqref{eq:MKVSDE} and \eqref{eq:decoupled} are of the type $\Vs : \RR^N \times \p_2(\RR^N) \to \RR^N$, so depend on a Euclidean variable as well as a measure variable. Considering functions on $\RR^N \times \p_2(\RR^N)$ raises a question about whether the order in which we take derivatives matters. A result from \cite{buckdahnmean} says that derivatives commute when the mixed derivatives are Lipschitz continuous.

\begin{lemma}[Lemma 4.1 in \cite{buckdahnmean} ]
        Let $g: \RR \times \p_2(\RR) \to \RR$ and suppose that the derivative functions
        \[
        (x, \mu, v )  \in \RR \times \p_2(\RR) \times \RR \to \left( \partial_{x} \partial_{\mu}  g( x, \mu, v), \partial_{\mu} \partial_{x}  g( x, \mu, v) \right) \in \RR \times \RR
        \] 
        both exist and are Lipschitz continuous: i.e. there exists a constant $C>0$ such that
        \[
        \left| \left( \partial_{x} \partial_{\mu}  g, \partial_{\mu} \partial_{x}  g \right)( x, \mu, v) - \left( \partial_{x} \partial_{\mu}  g, \partial_{\mu} \partial_{x}  g \right)( x', \mu', v') \right| \leq C \, \left( |x-x'| + W_2(\mu,\mu') + |v-v'|
        \right).
        \]
        Then, the functions $\partial_{x} \partial_{\mu}  g$ and $\partial_{\mu} \partial_{x}  $ are identical. 
\end{lemma}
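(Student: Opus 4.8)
The plan is to reduce the claim to the classical Clairaut (Schwarz) theorem for a real-valued function of two real variables, exploiting the lifting that defines the Lions derivative. First I would fix a sufficiently rich probability space $(\t\Omega, \t\F, \t\PP)$ and lift $g$ to the function $\t g : \RR \times L^2(\t\Omega;\RR) \to \RR$ defined by $\t g(x, \tX) := g(x, [\tX])$. The hypotheses guarantee that both first-order derivatives $\partial_x g$ and $\partial_\mu g$ exist, and, as recalled in Section~\ref{sec:diff}, they lift to $\partial_x \t g(x, \tX) = \partial_x g(x, [\tX])$ and to the Fr\'echet derivative $D \t g(x, \tX) = \partial_\mu g(x, [\tX], \tX)$, regarded as an element of $L^2(\t\Omega;\RR)$; in particular the directional derivative in a direction $\t\gamma \in L^2(\t\Omega;\RR)$ is $D \t g(x, \tX)(\t\gamma) = \widetilde{\EE}[\partial_\mu g(x, [\tX], \tX)\, \t\gamma]$.

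Next I would fix $x \in \RR$ and $\tX, \t\gamma \in L^2(\t\Omega;\RR)$ and study the scalar function of two real variables $\phi(s,t) := \t g(x+s, \tX + t\,\t\gamma)$. Differentiating first in $t$ and then in $s$ should give $\partial_s \partial_t \phi(0,0) = \widetilde{\EE}[\partial_x \partial_\mu g(x, [\tX], \tX)\, \t\gamma]$, whereas differentiating first in $s$ and then in $t$ should give $\partial_t \partial_s \phi(0,0) = \widetilde{\EE}[\partial_\mu \partial_x g(x, [\tX], \tX)\, \t\gamma]$. The crux of the argument, and the step I expect to be the main obstacle, is to justify rigorously the interchange of differentiation and expectation in these two identities and, above all, to verify that both mixed partials $(s,t) \mapsto \partial_s\partial_t\phi$ and $(s,t) \mapsto \partial_t\partial_s\phi$ are continuous near the origin, since continuity of the mixed partials is precisely the hypothesis under which Clairaut's theorem forces them to agree. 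This is where the assumed Lipschitz continuity of $\partial_x\partial_\mu g$ and $\partial_\mu\partial_x g$ is used: together with the square-integrability of $\tX$ and $\t\gamma$ and the elementary bound $W_2([\tX + t\t\gamma], [\tX + t'\t\gamma]) \leq |t-t'|\, \|\t\gamma\|_2$, it supplies the uniform domination needed to pass limits under $\widetilde{\EE}$ and to obtain the required continuity.

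Applying Clairaut's theorem then yields $\widetilde{\EE}[\partial_x\partial_\mu g(x, [\tX], \tX)\, \t\gamma] = \widetilde{\EE}[\partial_\mu\partial_x g(x, [\tX], \tX)\, \t\gamma]$ for every $\t\gamma \in L^2(\t\Omega;\RR)$, and since $\t\gamma$ is arbitrary this forces $\partial_x\partial_\mu g(x, [\tX], \tX) = \partial_\mu\partial_x g(x, [\tX], \tX)$ $\t\PP$-almost surely. Equivalently, for each $x$ and each $\mu = [\tX]$ the two continuous functions $v \mapsto \partial_x\partial_\mu g(x, \mu, v)$ and $v \mapsto \partial_\mu\partial_x g(x, \mu, v)$ coincide $\mu$-almost everywhere. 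To promote this to equality at every point I would reuse the perturbation device already recorded in Section~\ref{sec:diff}: choosing $\tX = \theta + \epsilon G$ with $G$ a standard Gaussian independent of $\theta$ makes $[\tX]$ absolutely continuous with full support, so the two functions agree Lebesgue-almost everywhere on $\RR$; letting $\epsilon \to 0$ and invoking the continuity of both mixed derivatives then gives $\partial_x\partial_\mu g(x,\mu,v) = \partial_\mu\partial_x g(x,\mu,v)$ for all $(x,\mu,v)$, which is the assertion of the lemma.
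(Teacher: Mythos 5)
The paper does not prove this lemma itself: it is imported verbatim as Lemma~4.1 of \cite{buckdahnmean}, so there is no in-paper proof to compare against. Your argument is correct and is essentially the standard one used in that reference: lift to $\t g(x,\tX)=g(x,[\tX])$, restrict to the two-parameter family $\phi(s,t)=\t g(x+s,\tX+t\t\gamma)$, identify the two mixed partials as $\widetilde{\EE}[\partial_x\partial_\mu g(x,[\tX],\tX)\,\t\gamma]$ and $\widetilde{\EE}[\partial_\mu\partial_x g(x,[\tX],\tX)\,\t\gamma]$, and invoke Schwarz's theorem. The points you flag as delicate are indeed the only ones needing care, and they close as you say: the Lipschitz hypothesis gives at most linear growth of the mixed derivatives in $v$, so Cauchy--Schwarz with $\tX,\t\gamma\in L^2$ supplies the domination for differentiating under $\widetilde{\EE}$, and the bound $W_2([\tX+t\t\gamma],[\tX+t'\t\gamma])\leq |t-t'|\,\|\t\gamma\|_2$ gives continuity of both mixed partials. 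Choosing $\t\gamma=(\partial_x\partial_\mu g-\partial_\mu\partial_x g)(x,[\tX],\tX)$ then yields $\mu$-a.e.\ equality in $v$, and the Gaussian-perturbation device from Section~\ref{sec:diff} upgrades this to equality everywhere. No gaps.
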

\label{lem:swapderivative}

\noindent With this in mind, we can introduce the following definition.

\begin{definition}[$\C^{n,n}_{b,Lip}(\RR^N \times \p_2(\RR^N) ; \RR^N)$]
\leavevmode
        \label{def:Ckk}
        \begin{itemize}
                \item[(a)]
                Let $V: \RR^N \times \p_2(\RR^N) \to \RR^N$ with components $V^1, \ldots, V^N: \RR^N \times \p_2(\RR^N) \to \RR$. We say that $V \in \C^{1,1}_{b, Lip}(\RR^N \times \p_2(\RR^N) ;\RR^N)$ if the following hold true: for each $i=1, \ldots, N$, $\partial_{\mu} V^i$ exists and $\partial_xV$ exists. Moreover, assume that for all $(x, \mu, v) \in \RR^N \times \p_2(\RR^N) \times \RR^N$
                \begin{equation*}
                        \left|  \partial_x V^i(x,\mu) \right| + \left| \partial_\mu V^i \left(x, \mu, v \right) \right| \leq C.
                \end{equation*}
                In addition, suppose that $\partial_{\mu }V^i$ and $\partial_xV$ are Lipschitz in the sense that
                for all $(x, \mu, v), ( x', \mu', v' ) \in \RR^N \times \p_2(\RR^N) \times \RR^N$, 
                \begin{align*}
                        \left| \partial_{\mu} V^i(x,\mu,v) - \partial_{\mu} V^i(x', \mu',v') \right| &\leq C \left( |x-x'| + W_2(\mu, \mu') + |v-v'| \right), \\
                        \left| \partial_{x} V(x,\mu) - \partial_{x} V(x', \mu') \right| & \leq C \left( |x-x'| + W_2(\mu, \mu') \right).
                \end{align*}
                
                \item[(b)]      We say that $V \in \C^{n,n}_{b, Lip}(\RR^N \times \p_2(\RR^N) ; \RR^N) $ if the following hold true:  for each $i=1, \ldots, N$, and all multi-indices $\alpha$ and $\gamma$ on $\{1, \ldots, N\}$ and all $\bb \in \B$ satisfying $|\alpha| + |\bb| + |\gamma| \leq n$, the derivatives
                \[
                \partial^{\gamma}_x     \partial^{\bb}_{\bv} \partial^{\alpha}_{\mu}V^i(x,\mu,\bv),
      \partial^{\bb}_{\bv} \partial^{\alpha}_{\mu}\partial^{\gamma}_xV^i(x,\mu,\bv),       \partial^{\bb}_{\bv} \partial^{\gamma}_x\partial^{\alpha}_{\mu}V^i(x,\mu,\bv)  
                \]
                exist. Moreover, suppose that each of these derivatives is bounded         and Lipschitz.
                \item[(c)] We say that $h \in \C^n_{b,Lip}(\p_2(\RR^N) ; \RR^N)$ if $h:\p_2(\RR^N) \to \RR^N$  does not depend on a Euclidean variable but otherwise satisfy the conditions in part (b). 
        \end{itemize}
\end{definition}


\begin{remark}
        \label{rem:derivs}
        \begin{enumerate}
                \item For functions $V:\RR^N \times \p_2(\RR^N) \to \RR^N$, we will also consider the lifting $\t V : \RR^N \times L^2( \Omega) \to \RR^N$. Then, for $\xi \in L^2(\Omega)$, $\t V(\xi, \xi)$ should be interpreted as $\t V(\xi( \omega), \xi) = V(\xi( \omega), [\xi])$ with the first argument being considered pointwise by $ \omega$ and the second depending on the  random variable $\xi$ through its law.
                
                \item From the bounds in Definition \ref{def:Ckk}(a), we have the following simple consequences for the Fr\'echet derivative of the lifting $\tilde{V}$ of $V$: for all $x,x' \in \RR^N$ and $\theta,\theta', \gamma,\gamma' \in L^2(\Omega)$,
                \begin{align*}
                        & \left|D \tilde{V}(x,\theta)(\gamma) \right| \leq C \, \|\gamma\|_2 \\
                        & \left|D \tilde{V}(x,\theta)(\gamma) - D \tilde{V}(x',\theta')(\gamma') \right| \leq C \left[\|\gamma\|_2 \left(|x-x'|+\|\theta -  \theta'\|_2 \right) + \|\gamma-\gamma'\|_2 \right].
                \end{align*}
                
                \item Note that we cannot interchange the order of $\partial_\mu$ and $\partial_v$ in $  \partial_v \partial_{\mu}V(x,\mu,v)$ since $V(x,\mu)$ does not depend on $v$. However, if $V \in \C^{n,n}_{b, Lip}(\RR^N \times \p_2(\RR^N) ; \RR^N) $ then for all $\alpha, \bb, \gamma$ with $|\alpha| + |\bb| + |\gamma| \leq n$, we have that
                \begin{align*}
                        \partial^{\gamma}_x \partial^{\bb}_{\bv} \partial^{\alpha}_{\mu} V(x,\mu,\bv) =\partial^{\bb}_{\bv}  \partial^{\gamma}_x  \partial^{\alpha}_{\mu} V(x,\mu,\bv) = \partial^{\bb}_{\bv}    \partial^{\alpha}_{\mu} \partial^{\gamma}_x V(x,\mu,\bv)
                \end{align*}
                due to Lemma 2.2.
        \end{enumerate}
\end{remark}

\noindent We now introduce some concrete examples of functions $V: \RR^N \times \p_2(\RR^N) \to \RR^N$.

\begin{example}[Scalar interaction]

        \label{ex:scalar}
        Take $U \in \C^{k+1}_b(\RR^N \times \RR;\RR^N)$, $\phi \in \C^{k+1}_b(\RR^N;\RR)$ and $\textstyle V(x,\mu):=U(x, \int \phi d \mu )$.
\end{example}

\begin{example}[First-order interaction]
        \label{ex:first}
        
        Take $W \in \C^{k+1}_b(\RR^N \times \RR^N;\RR^N)$ and $ \textstyle V(x,\mu):= \int W(x, \cdot )d \mu $. 
\end{example}

\begin{lemma}
        \label{lem:examples}
        In both examples, $V \in \C^{k,k}_{b,Lip}(\RR^N \times \p_2(\RR^N); \RR^N)$.
\end{lemma}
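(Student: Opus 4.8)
The plan is to treat both examples by a single strategy: lift $V$ to $\widetilde V$ on $\RR^N \times L^2(\t\Omega;\RR^N)$, differentiate explicitly in the Fr\'echet sense, read off the Lions derivatives, and then verify boundedness and Lipschitz continuity of the resulting closed-form expressions. Since by Remark \ref{rem:derivs}(3) all the mixed derivatives listed in Definition \ref{def:Ckk}(b) coincide, it suffices to compute one convenient ordering of $\partial_x$, $\partial_\mu$, $\partial_v$ for each and to check its properties. The single analytic fact underlying everything is the following: if $f:\RR^N\to\RR$ is Lipschitz, then $\mu \mapsto \int f\,d\mu$ is Lipschitz with respect to $W_2$, with constant at most $\mathrm{Lip}(f)$. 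Indeed, for any coupling $\Pi$ of $\mu,\mu'$,
\[
\Big| \int f\,d\mu - \int f\,d\mu' \Big| = \Big| \int (f(x)-f(y))\,\Pi(dx,dy) \Big| \leq \mathrm{Lip}(f) \int |x-y|\,\Pi(dx,dy),
\]
and taking the infimum over $\Pi$ and applying Cauchy--Schwarz gives the bound $\mathrm{Lip}(f)\,W_2(\mu,\mu')$. This is what controls the $\mu$-dependence in both examples.

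For the scalar interaction, write $m(\mu):=\int\phi\,d\mu$, with lift $\widetilde m(\tX)=\widetilde\EE[\phi(\tX)]$. A first-order Taylor expansion together with dominated convergence (valid because $\phi\in\C^1_b$) shows that $\widetilde m$ is Fr\'echet differentiable with $D\widetilde m(\tX)(\t\gamma)=\widetilde\EE[\nabla\phi(\tX)\cdot\t\gamma]$, i.e. $\partial_\mu m(\mu,v)=\nabla\phi(v)$. Since $V^i(x,\mu)=U^i(x,m(\mu))$, the Fr\'echet chain rule for the scalar-valued outer function $U^i$ gives $\partial_x V^i(x,\mu)=\partial_x U^i(x,m(\mu))$ and $\partial_\mu V^i(x,\mu,v)=\partial_r U^i(x,m(\mu))\,\nabla\phi(v)$, where $\partial_r$ denotes the derivative in the scalar second argument of $U$. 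Iterating --- each additional $\partial_\mu$ produces one further factor $\nabla\phi(v_\ell)$ and one further $\partial_r$ acting on $U$, while $\partial^{\beta_\ell}_{v_\ell}$ acts only on the $\ell$-th $\phi$-factor and $\partial^\gamma_x$ only on the first argument of $U$ --- one obtains by induction, for $\alpha=(i_1,\dots,i_{|\alpha|})$ and $\#\bb=|\alpha|$,
\[
\partial^\gamma_x\partial^{\bb}_{\bv}\partial^\alpha_\mu V^i(x,\mu,\bv)
= \big(\partial^\gamma_x\partial_r^{|\alpha|}U^i\big)(x,m(\mu))\prod_{\ell=1}^{|\alpha|}\big(\partial^{\beta_\ell}_{v_\ell}\partial_{x_{i_\ell}}\phi\big)(v_\ell).
\]
Every factor is a derivative of $U$ of order $|\gamma|+|\alpha|\leq k+1$ or of $\phi$ of order $|\beta_\ell|+1\leq k+1$, hence bounded; each is Lipschitz because derivatives of $U,\phi$ of order $\leq k+1$ are bounded (so Lipschitz), $m$ is $W_2$-Lipschitz by the fact above, and finite products of bounded Lipschitz functions are bounded Lipschitz. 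This gives $V\in\C^{k,k}_{b,Lip}$.

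For the first-order interaction, lift $\widetilde V^i(x,\tX)=\widetilde\EE[W^i(x,\tX)]$. The same Taylor argument yields $\partial_x V^i(x,\mu)=\int\partial_x W^i(x,y)\,\mu(dy)$ and $\partial_\mu V^i(x,\mu,v)=\nabla_y W^i(x,v)$, where $\nabla_y$ is the gradient in the second $\RR^N$-argument of $W$. The crucial feature here is that $\partial_\mu V^i$ does \emph{not} depend on $\mu$, so $\partial^\alpha_\mu V^i\equiv 0$ for $|\alpha|\geq 2$; the only derivatives to check are $\partial^\gamma_x\int W^i(x,\cdot)\,d\mu=\int\partial^\gamma_x W^i(x,y)\,\mu(dy)$ and $\partial^\gamma_x\partial^{\beta_1}_{v_1}\partial_\mu V^i(x,\mu,v_1)=\partial^\gamma_x\partial^{\beta_1}_{v_1}\nabla_y W^i(x,v_1)$. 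Both are bounded by sup-norms of derivatives of $W$ of order $\leq k+1$, and both are Lipschitz, the measure argument entering only through integrals $\int g\,d\mu$ of Lipschitz functions $g$; hence $V\in\C^{k,k}_{b,Lip}$.

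The genuinely technical points are two. First, one must justify that the lifts are Fr\'echet (not merely G\^ateaux) differentiable with the claimed derivatives; this requires the uniform control furnished by the boundedness of the first derivatives of $\phi$ and $W$, via Taylor's theorem and dominated convergence. Second, and this is the main bookkeeping obstacle, one must run the induction carefully so that the commutation of $\partial_\mu,\partial_v,\partial_x$ (licensed by Remark \ref{rem:derivs}(3)) is respected and so that the order-$k$ derivatives come out Lipschitz --- this last point is precisely why one derivative of regularity is consumed, so that the hypothesis $U,\phi,W\in\C^{k+1}_b$ is needed to land in $\C^{k,k}_{b,Lip}$.
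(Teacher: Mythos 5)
Your proposal is correct, and it supplies in full the argument that the paper itself omits: the paper's ``proof'' of Lemma \ref{lem:examples} is the single sentence ``The proof is straightforward,'' so there is no authorial argument to compare against. Your route --- lift, compute the Fr\'echet derivative explicitly, read off $\partial_\mu m(\mu,v)=\nabla\phi(v)$ resp.\ $\partial_\mu V^i(x,\mu,v)=\nabla_y W^i(x,v)$, iterate to a closed form, and control the $W_2$-Lipschitz dependence on $\mu$ through the elementary coupling estimate for $\mu\mapsto\int f\,d\mu$ --- is exactly the intended one, and your observation that in the first-order case $\partial_\mu V^i$ is independent of $\mu$ (so all higher $\mu$-derivatives vanish) is the key simplification there.

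Two small points deserve tightening. First, you invoke Remark \ref{rem:derivs}(3) to reduce to one ordering of the derivatives, but that remark is stated as a \emph{consequence} of membership in $\C^{k,k}_{b,Lip}$, so using it here is circular; the clean fix is either to verify the orderings required by Definition \ref{def:Ckk}(b) directly from your explicit formulas (they visibly agree), or to check existence and Lipschitz continuity of the two mixed second derivatives and invoke Lemma 2.2. Second, your parenthetical ``derivatives of $U,\phi$ of order $\leq k+1$ are bounded (so Lipschitz)'' conflates bounded with Lipschitz: a $(k+1)$-st order derivative of a $\C^{k+1}_b$ function need not be Lipschitz. The conclusion survives because the factors actually occurring have order at most $k$: since each $v_\ell$ is attached to one application of $\partial_\mu$, one has $\#\bb=|\alpha|$, so $|\bb|\geq 1$ forces $|\alpha|\geq 1$ and hence $|\beta_\ell|+1\leq|\bb|+1\leq k-|\alpha|+1\leq k$, while the $U$-factor has order $|\gamma|+|\alpha|\leq k$. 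Making this counting explicit closes the argument; with it, the proof is complete.
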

The proof is straightforward.

\subsection{Kusuoka-Stroock processes on $\RR^N \times \p_2(\RR^N)$}
\label{sec:introKSPprob}

In Section \ref{sec:IBPdecoupled}, we develop integration by parts formulas modelled on those developed in works of Kusuoka \cite{Kusuoka3} along with Stroock \cite{KusStrIII} for solutions of classical SDEs. These integration by parts formulas take the form
\begin{align*}
\partial^{\alpha}_x \,  \EE \left[f(\Xd_t)\, \Psi(t,x, [\theta]) \right] &=   \EE  \left[f(\Xd_t)\, \Psi_{\alpha}(t,x, [\theta]) \right] ,\\
\partial^{\beta}_{\mu} \,       \EE \left[ f(\Xd_t)\, \Psi(t,x, [\theta]) \right](\bv) & =  \EE \left[f(\Xd_t)\, \Psi_{\beta}(t,x, [\theta], \bv) \right] 
\end{align*}
for processes $\Psi, \Psi_{\alpha},\Psi_{\beta}$ belonging to a specific class. We work with a class of processes similar to one introduced in \cite{KusStrIII}, which we call the class of Kusuoka-Stroock processes.

\begin{definition}[ Kusuoka-Stroock processes on $\RR^N \times \p_2(\RR^N)$]\label{KSPDefn}
        Let $E$ be a separable Hilbert space and let $r \in \RR$, $q,M \in \mathbb{N}$. We
        denote by $\KK^q_{r}(E,M)$ the set of processes $\Psi : [0,T] \times
        \mathbb{R}^N \times \p_2(\RR^N) \to \mathbb{D}^{M,\infty}(E)$ satisfying the
        following:
        \begin{enumerate} 
                \item For any multi-indices $\alpha, \bb$, $\gamma$ satisfying $\vert \alpha \vert + |\bb| +|\gamma| \leq  M$, the function 
                \[
                [0,T] \times \RR^N \times \p_2(\RR^N) \ni (t,x , [\theta]) \mapsto \partial^{\gamma}_x \partial^{\bb}_{\bv}  \partial^{\alpha}_{\mu} \Psi(t,x,[\theta], \bv) \in L^p(\Omega)
                \]
                exists and is continuous for all $p \geq 1$.
                \item For any $p \geq 1$ and $m \in \NN$ with $|\alpha| + |\bb| + |\gamma| +m \leq M$, we have 
                \begin{align}\label{ksineq}
                        \begin{split}
                                \sup_{ \bv \in (\RR^N)^{\# \bb}} \sup_{t \in (0,T]} t^{-r/2} & \left\| \partial^{\gamma}_x \partial^{\bb}_{\bv}  \partial^{\alpha}_{\mu} \Psi(t,x,[\theta], \bv) \right\|_{ \DD^{m,p}(E) }  \leq C \, \left(1 + |x| + \|\theta\|_2 \right)^q.
                        \end{split}
                \end{align}
        \end{enumerate}
\end{definition}

\begin{remark}
                This definition is different to that in \cite{KusStrIII} in the following ways:
                \begin{enumerate}
                \item The processes depend on a parameter $\mu \in \p_2(\RR^N)$.
                \item We keep track of polynomial growth in $x$ of the $\DD^{m,p}$-norm through a parameter $q>0$ instead of requiring it to be uniformly bounded.
                \item We require continuity in $L^p(\Omega)$ rather than almost surely.
                \end{enumerate}
\end{remark}

\begin{remark}
        \begin{enumerate}
                \item The number $M$ denotes how many times the Kusuoka-Stroock process can be differentiated; $q$ measures the polynomial growth of the $\DD^{m,p}$-norm of the process in $(x,[\theta])$, and $r$ measures the growth in $t$.
                \item In the definition, we are able to stipulate that the $\DD^{m,p}$-norm of all the derivatives will be uniformly bounded w.r.t. $\bv$ because in the sequel the only dependence on $\bv$ in any Kusuoka-Stroock processes will come from $\partial_{\mu} \Xd_t(v)$. In Lemma \ref{lem:linearestimate} $\partial_{\mu} \Xd_t(v)$ is bounded w.r.t $v$ and this carries over to the $\DD^{m,p}$-norm.
        \end{enumerate}
        
\end{remark}

To analyse the density of solutions of the MVSDE \eqref{eq:MKVSDE} started from a fixed initial point in $\RR^N$, it is useful to have notation for Kusuoka-Stroock processes which do not depend on a measure $\mu \in \p_2(\RR^N)$. We denote this class by $\K_r^q(\RR,M)$.
The following lemma says that if we take a Kusuoka-Stroock process on $\RR^N \times \p_2(\RR^N)$ and evaluate its measure argument at a Dirac mass, then this forms a Kusuoka-Stroock process on $\RR^N $. Its proof is straightforward.

\begin{lemma}
        \label{lem:conversion}
        If $\Psi \in \KK_r^q(\RR, M)$ and we define $\Phi(t,x):=\Psi(t,x,\delta_x)$, then $\Phi \in \K_r^q(\RR,M)$.
\end{lemma}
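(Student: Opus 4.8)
The plan is to verify the two conditions of Definition \ref{KSPDefn} directly for $\Phi(t,x):=\Psi(t,x,\delta_x)$, reducing each to the corresponding property already assumed for $\Psi \in \KK_r^q(\RR,M)$. The key observation is that the map $\RR^N \ni x \mapsto \delta_x \in \p_2(\RR^N)$ is smooth in the Lions sense and that, for this particular lift, the measure derivative $\partial_\mu$ collapses into an ordinary Euclidean derivative because $\delta_x$ is supported at the single point $x$. Concretely, when we differentiate $\Phi$ in $x$ we pick up contributions both from the explicit Euclidean slot of $\Psi$ and, via the chain rule, from the measure slot evaluated at the atom of $\delta_x$; each such contribution is one of the derivatives $\partial^\gamma_x\partial^{\bb}_{\bv}\partial^\alpha_\mu\Psi$ (with the extra variables $\bv$ set equal to $x$) that Definition \ref{KSPDefn} already controls.

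First I would make the chain rule explicit. For a function $W$ on $\RR^N\times\p_2(\RR^N)$ one has, writing $\mu=\delta_x$, that $\partial_{x_j}\big[W(t,x,\delta_x)\big]=\big(\partial_{x_j}W\big)(t,x,\delta_x)+\big(\partial_\mu W\big)(t,x,\delta_x,x)_j$, since perturbing $x$ moves both the Euclidean argument and the single atom of the Dirac mass; the $v$-slot of $\partial_\mu W$ is evaluated at $v=x$. Iterating this identity, each $\partial^\gamma_x\Phi$ expands as a finite sum of terms of the form $\big(\partial^{\gamma'}_x\partial^{\bb}_{\bv}\partial^{\alpha}_\mu\Psi\big)(t,x,\delta_x,x,\ldots,x)$ with $|\gamma'|+|\bb|+|\alpha|=|\gamma|$, where every auxiliary variable $v_i$ is set to $x$. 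I would record this expansion as the single combinatorial lemma underlying both verifications; it is standard bookkeeping once the basic identity above is fixed.

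For condition (1) of Definition \ref{KSPDefn}, I would note that $\Phi$ depends only on a Euclidean variable, so only $\gamma$-derivatives arise. By the expansion above each $\partial^\gamma_x\Phi(t,x)$ is a finite sum of the derivatives $\partial^{\gamma'}_x\partial^{\bb}_{\bv}\partial^{\alpha}_\mu\Psi$ evaluated at $([\theta],\bv)=(\delta_x,(x,\ldots,x))$. Existence and $L^p(\Omega)$-continuity of these objects for $|\gamma'|+|\bb|+|\alpha|\le M$ are exactly hypothesis (1) for $\Psi$, and continuity is preserved under the continuous evaluation maps $x\mapsto\delta_x$ (continuous into $(\p_2(\RR^N),W_2)$) and $x\mapsto(x,\ldots,x)$; composition of continuous maps together with the joint continuity assumed for $\Psi$ yields continuity of $\Phi$'s derivatives. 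For condition (2), the same expansion gives $\big\|\partial^\gamma_x\Phi(t,x)\big\|_{\DD^{m,p}(E)}$ bounded by a finite sum of the quantities $\big\|\partial^{\gamma'}_x\partial^{\bb}_{\bv}\partial^{\alpha}_\mu\Psi(t,x,\delta_x,x,\ldots,x)\big\|_{\DD^{m,p}(E)}$; by \eqref{ksineq} for $\Psi$ each of these is at most $C\,t^{r/2}\,(1+|x|+\|\theta\|_2)^q$, and here $\theta$ is chosen with $[\theta]=\delta_x$, i.e. $\theta\equiv x$, so $\|\theta\|_2=|x|$ and the bound becomes $C\,t^{r/2}\,(1+2|x|)^q\le C'\,t^{r/2}\,(1+|x|)^q$. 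Since the $\bv$-dependence of $\Psi$ was uniformly bounded (the supremum over $\bv$ already sits inside \eqref{ksineq}), evaluating at the specific point $\bv=(x,\ldots,x)$ only strengthens the estimate. This gives $\Phi\in\K_r^q(\RR,M)$ with the same parameters $r,q,M$.

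The only genuinely delicate step is justifying the chain-rule identity $\partial_{x_j}\big[W(t,x,\delta_x)\big]=\big(\partial_{x_j}W\big)(t,x,\delta_x)+\big(\partial_\mu W\big)(t,x,\delta_x,x)_j$, since it passes a Euclidean derivative through the Lions derivative along the curve $x\mapsto\delta_x$; one must check that the Fréchet differentiability of the lift and the assumed joint continuity of $\partial_\mu W$ in $(\mu,v)$ are enough to differentiate the composition and to identify the measure-direction contribution as the evaluation at the atom $v=x$. This can be seen by lifting: representing $\delta_x$ by the constant random variable $\widetilde X_0\equiv x$ and perturbing it to $\widetilde X_0+h e_j$, the Fréchet derivative of $\widetilde W$ in that direction is $\widetilde{\EE}\big[\partial_\mu W(t,x,\delta_x,x)\cdot e_j\big]=\big(\partial_\mu W\big)(t,x,\delta_x,x)_j$, while the explicit $x$-slot contributes $\big(\partial_{x_j}W\big)(t,x,\delta_x)$; combining these via the differentiability assumptions gives the stated identity. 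Everything else is routine substitution into the hypotheses on $\Psi$, which is why the lemma is asserted to have a straightforward proof.
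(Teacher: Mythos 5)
Your proof is correct and is exactly the ``straightforward'' argument the paper has in mind but omits: the iterated chain rule along $x\mapsto\delta_x$ (the same first-order identity the paper itself invokes at the start of the proof of Theorem \ref{th:MKVIBP}), followed by substitution of $\theta\equiv x$ so that $\|\theta\|_2=|x|$ and the growth exponent $q$ is preserved. The derivative count $|\gamma'|+|\bb|+|\alpha|=|\gamma|\le M$ keeps every term within the range controlled by Definition \ref{KSPDefn}, so both conditions transfer as you describe.
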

\section{Regularity of Solutions of McKean-Vlasov SDEs}
\label{sec:regsoln}
This section contains some basic results about solutions of the equations involved, their integrability and their differentiability with respect to parameters. Existence and uniqueness of solutions to \eqref{eq:MKVdecoupled} is covered in Section \ref{sec:introMKV}.

\begin{proposition}[First-order derivatives]
        \label{prop:firstderivs}
        Suppose that \newline $\Vs \in \C^{1,1}_{b,\text{Lip}}(\RR^N \times \p_2(\RR^N);\RR^N)$. Then the following hold:
        \begin{itemize}
                \item[(a)]
                There exists a modification of $X^{x, [\theta]}$ such that, for all $t \in [0,T]$, the map $x \mapsto X_t^{x, \theta}$ is $\PP$-a.s. differentiable. We denote the derivative $\partial_x X^{x, [\theta]}$ and note that it solves the following SDE
                \begin{equation}
                \label{eq:jac}
                \partial_x \Xd_t = \text{Id}_N  + \sum_{i=0}^d \int_0^t   \partial V_i \left(\Xd_s, [\Xt_s ] \right) \, \partial_x \Xd_s \, dB^i_s.
                \end{equation}
                
                \item[(b)] For all $t \in [0,T]$, the maps $\theta \mapsto \Xt_t$ and $\theta \mapsto X_t^{x, [\theta]}$ are Fr\'echet differentiable in $L^2(\Omega)$, i.e. there exists a linear continuous map $D \Xt_t : L^2(\Omega) \to L^2(\Omega)$ such that for all $ \gamma \in L^2(\Omega)$, 
                \[
                \| X_t^{\theta +  \gamma}- \Xt_t - D \Xt_t(\gamma)\|_2 =o(\|\gamma\|_2) \:\: \text{  as  } \:\: \|\gamma\|_2 \to 0 ,
                \]
                and similarly for $\Xd_t$. These processes satisfy the following  stochastic differential equations
                \begin{align}
                        \label{eq:Frech} D \Xd_t (\gamma) &=  \sum_{i=0}^d \int_0^t \left[ \partial V_i(\Xd_s, [X_s^{\theta}]) \, D \Xd_s(\gamma) +   D \tilde{V}_i(\Xd_s,  \Xt_s )(D  \Xt_s (\gamma) ) \right] \, dB^i_s ,\\
                        \label{eq:FrechMKV}     D \Xt_t (\gamma) &= \gamma + \sum_{i=0}^d \int_0^t \left[ \partial V_i(\Xt_s, [X_s^{\theta}]) \, D\Xt_s (\gamma) +    D \tilde{V}_i(\Xt_s,  \Xt_s )(D  \Xt_s (\gamma) ) \right] \, dB^i_s ,
                \end{align}
                where we denote by $\tilde{V}_i$ the lifting of $V_i$ to a function on $\RR^N \times L^2(\Omega)$.
                Moreover, for each $x \in \RR^N$, $t \in [0,T]$, the map $\p_2(\RR^N) \ni [\theta] \mapsto \Xd_t \in L^p(\Omega)$ is differentiable for all $p \geq 1$. So, $\partial_{\mu} X^{x, [\theta]}_t(v) $ exists and
                it satisfies the following equation
                \begin{align}
                        \label{eq:partialmuX}
                        \begin{split}
                                \partial_{\mu} \Xd_t(v) = \sum_{i=0}^d \int_0^t & \bigg\{ \partial V_i(\Xd_s, [X_s^{\theta}]) \,\partial_{\mu} \Xd_s(v) 
                                  + \widetilde{\EE} \left[ \partial_{\mu}V_i (\Xd_s, [X_s^{\theta}], \widetilde{ X}^{v,[\theta]}_s ) \, \partial_x \widetilde{ X}_s^{v, [\theta]}    \right]  \\
                                & \quad + \widetilde{\EE} \left[ \partial_{\mu}V_i (\Xd_s, [X_s^{\theta}], \widetilde{ X}^{\t \theta}_s ) \,  \partial_{\mu} \widetilde{ X}_s^{\t\theta,[\theta]}(v)   \right] \bigg\} dB^i_s ,
                        \end{split}
                \end{align}
                where $\widetilde{ X}^{\t \theta}_s$ is copy of $\Xt_s$ on the probability space $(\t \Omega, \t \F, \t \PP)$ driven by the Brownian motion $\t B$ and with initial condition $\t \theta$. Similarly, $ \partial_x  \widetilde{X}_s^{v, [\theta]} $ is a copy of $ \partial_x  X_s^{v, [\theta]} $ driven by the Brownian motion $\t B$ and $\partial_{\mu} \widetilde{ X}_s^{\t\theta,[\theta]}(v)= \left. \partial_{\mu} \widetilde{ X}_s^{x,[\theta]}(v) \right|_{x = \t \theta}$.
                Finally, the following representation holds for all $\gamma \in L^2(\Omega)$:
                \begin{equation}
                \label{eq:Frechrep}
                D \Xd_t (\gamma) = \widetilde{\EE} \left[\partial_{\mu} \Xd_t(\t \theta) \, \t \gamma \right].
                \end{equation} 
                
                \item[(c)] For all $t \in [0,T]$, $\Xd_t, \Xt_t \in \DD^{1, \infty}$. Moreover, $\D_r \Xd = \left(\D^j_r (\Xd)^i \right)_{\substack{1 \leq i \leq N \\ 1 \leq j \leq d}}$ satisfies, for $0 \leq r \leq t$
                \begin{equation}
                \label{eq:mall}
                \D_r \Xd_t =  \sigma(\Xd_r, [\Xt_r])  + \sum_{i=0}^d \int_r^t \partial V_i(\Xd_s, [\Xt_s]) \, \D_r \Xd_s \, dB^i_s ,
                \end{equation}
                where $\sigma(z, \mu)$ is the $N \times d$ matrix with columns $V_1(z,\mu), \ldots, V_d(z,\mu)$.
        \end{itemize}
\end{proposition}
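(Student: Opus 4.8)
The plan is to establish all three parts with the same underlying difference-quotient-plus-Gronwall machinery, the only real subtlety being which arguments of the coefficients $V_i$ actually move under each type of perturbation. Throughout I use that $\Vs \in \C^{1,1}_{b,\text{Lip}}$ supplies bounded, Lipschitz first derivatives in both the Euclidean and the measure directions, so that every candidate variational equation below has bounded adapted coefficients and is therefore well-posed with the $L^p$ bounds of Proposition \ref{prop:MKVSDE}.

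For part (a), the key observation is that in the decoupled equation \eqref{eq:decoupled} the law $[\Xt_s]$ appearing in the coefficients does \emph{not} depend on $x$. Differentiating $x \mapsto \Xd_t$ is therefore the classical problem of differentiating an SDE flow with respect to a deterministic initial condition, with the law frozen as an $x$-independent (random) input. I would introduce the candidate Jacobian as the unique solution of the linear SDE \eqref{eq:jac}, whose coefficients $\partial V_i(\Xd_s,[\Xt_s])$ are bounded and adapted, then bound the $L^p$-norm of $h^{-1}(X_t^{x+he_j,[\theta]}-\Xd_t)$ minus the candidate using the Lipschitz continuity of $\partial V_i$ together with \eqref{eq:MKVLpreg} and Gronwall, obtaining $L^p$-convergence to $0$ as $h\to 0$. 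A Kolmogorov continuity argument applied jointly in $(x,h)$ upgrades this to the existence of an a.s.\ differentiable modification.

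Part (b) is the heart of the proof and I would organise it in three stages. First, Fréchet differentiability of the genuine MVSDE flow $\theta\mapsto\Xt_t$: perturbing $\theta\to\theta+h\gamma$ moves both the initial condition and the law, and the candidate derivative solves the coupled linear equation \eqref{eq:FrechMKV}, whose law-perturbation term is controlled by the bounds on $D\tilde V_i$ recorded in Remark \ref{rem:derivs}; well-posedness of this linear McKean--Vlasov equation and $L^2$-convergence of the difference quotients again follow from Gronwall. For the decoupled flow $\theta\mapsto\Xd_t$ with $x$ fixed, only the law moves, giving \eqref{eq:Frech}, which feeds in $D\Xt_s(\gamma)$ from the first stage. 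Second, and most delicately, I would pass from the Fréchet to the Lions derivative: since $\Xd_t$ depends on $\theta$ only through its law, the linear map $\gamma\mapsto D\Xd_t(\gamma)$ must admit an integral representation against a measurable kernel, which is exactly \eqref{eq:Frechrep} and \emph{defines} $\partial_{\mu}\Xd_t(v)$ in the sense of Section \ref{sec:diff}. Inserting \eqref{eq:Frechrep} into \eqref{eq:Frech} and resolving the law-perturbation on an independent copy $(\t\Omega,\t\F,\t\PP)$ then produces the two terms of \eqref{eq:partialmuX}: the $\partial_x \widetilde{X}_s^{v,[\theta]}$ term comes from the dependence of the coefficient on the random representative of the law, while the recursive $\partial_\mu \widetilde{X}_s^{\t\theta,[\theta]}(v)$ term comes from the law's own dependence on $\theta$.

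Finally, part (c) is the most routine because the measure argument $[\Xt_s]$ is \emph{deterministic} and hence annihilated by $\D_r$: only the Euclidean argument contributes, so $\D_r\Xd_t$ satisfies the same linear equation as in the classical case, with the usual initial jump $\sigma(\Xd_r,[\Xt_r])$ at $s=r$ arising from the stochastic integral. I would confirm $\Xd_t,\Xt_t\in\DD^{1,\infty}$ by showing the Picard iterates are Malliavin differentiable with $\DD^{1,p}$-norms bounded uniformly via Gronwall, pass to the limit, and read off \eqref{eq:mall}. I expect the main obstacle to be the Lions-derivative step of part (b): disentangling the two ways $\theta$ enters the coefficient and justifying, against the merely $L^2$ nature of the Fréchet derivative, that it genuinely has the pointwise form $\partial_\mu\Xd_t(v)$ with enough continuity in $v$ for the evaluation at $v=\t\theta$ in \eqref{eq:partialmuX} and for \eqref{eq:Frechrep} to be meaningful.
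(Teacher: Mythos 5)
Parts (a) and (c) of your plan are sound and agree in substance with the paper: for (a) the paper simply cites Kunita's flow theorem for classical SDEs with time-dependent coefficients (the law $[\Xt_s]$ being an $x$-independent input, exactly as you observe), and for (c) it runs the same Picard-iteration argument you describe, using that the measure argument is deterministic and hence killed by $\D_r$. The genuine problem is the pivotal step of part (b), where you pass from the Fr\'echet derivative to the Lions derivative. You propose to first invoke the structure theorem for law-invariant functionals to obtain a kernel representing $D\Xd_t(\gamma)$, declare that kernel to be $\partial_{\mu}\Xd_t(v)$, and then derive \eqref{eq:partialmuX} by ``inserting \eqref{eq:Frechrep} into \eqref{eq:Frech}''. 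As stated this is circular: to resolve the term $D\tilde V_i(\Xd_s,\Xt_s)(D\Xt_s(\gamma))$ in \eqref{eq:Frech} into the two $\widetilde{\EE}[\cdots]$ terms of \eqref{eq:partialmuX}, you need the decomposition $D\Xt_s(\gamma)=\partial_x X_s^{\theta,[\theta]}\gamma+\widehat{\EE}\bigl[\partial_{\mu}X_s^{\t\theta,[\theta]}(\widehat{\theta})\,\widehat{\gamma}\bigr]$ of the derivative of the \emph{coupled} flow; but $\theta\mapsto\Xt_s$ is not law-invariant, so the structure theorem does not supply this — it is precisely part of what must be proven. Moreover the structure theorem only yields a $[\theta]$-a.e.\ defined measurable kernel with no continuity in $v$, so the evaluation at $v=\t\theta$ inside the stochastic integral in \eqref{eq:partialmuX} is not yet meaningful, as you yourself anticipate.

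The paper closes this gap by running the identification in the opposite direction. It takes \eqref{eq:partialmuX} (together with \eqref{eq:jac}) as the \emph{definition} of a candidate process: this is a linear system with bounded coefficients, well-posed by Gronwall with no differentiability input. It then evaluates the candidate equation at $v=\widehat{\theta}$ on a third independent space, multiplies by $\widehat{\gamma}$, takes $\widehat{\EE}$ (commuting $\widehat{\EE}$ with the It\^o integral by a stochastic Fubini theorem), adds the equation for $\partial_x X^{x,[\theta]}_t\big|_{x=\theta}\gamma$, and checks that the sum solves the same SDE as $D\Xt_t(\gamma)$ once the latter is rewritten in terms of $\partial_{\mu}V_i$; uniqueness for that linear equation then yields the decomposition of $D\Xt_t(\gamma)$, and the identical comparison for the decoupled flow yields \eqref{eq:Frechrep}, which by definition identifies the candidate as the Lions derivative. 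If you want your argument to close, you should adopt this guess-and-verify-by-uniqueness step rather than trying to extract the kernel and its equation directly from the Fr\'echet derivative.
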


\begin{proof}
        \begin{itemize}
                \item[(a)] Recalling again that $X^{x, [\theta]}$ satisfies a classical SDE with time-dependent coefficients, it follows from \cite{Kun84}  Theorem 4.6.5 there exists a modification of $X^{x, [\theta]}_t$ which is continuously differentiable in $x$, and the first derivative satisfies equation \eqref{eq:jac}.
                
                \item[(b)] It is shown in \cite[Lemma 4.17]{crischassdel} that the map $ \theta \mapsto (\Xt_t,\Xd_t)$ is Fr\'echet differentiable. It is then easy to see the Fr\'echet derivative processes satisfy equations \eqref{eq:Frech} and \eqref{eq:FrechMKV}. Now, we follow the idea in \cite{buckdahnmean} to show that $\partial_{\mu} \Xd_t(v)$ solves equation \eqref{eq:partialmuX}. We first re-write the equation for $D \Xt_t(\gamma)$ in terms of $\partial_{\mu}V_i$ instead of the Fr\'echet derivative of the lifting $\t V_i$, as follows
                \begin{align}
                        \label{eq:FrechMKValt}\begin{split}
                                D \Xt_t (\gamma) = \gamma + \sum_{i=0}^d  \int_0^t & \bigg\{  \partial V_i(\Xt_s, [X_s^{\theta}]) \, D\Xt_s (\gamma)  + \widetilde{\EE} \left[ \partial_{\mu} V_i \left(\Xt_s, [\Xt_s], \widetilde{X}^{\t \theta}_s\right) D\widetilde{X}_s^{\t \theta}(\widetilde{\gamma})\right]  \bigg\} \, dB^i_s .
                        \end{split}
                \end{align}
                
                Consider the equation satisfied by $\partial_{\mu}  X_s^{\t\theta,[\theta]}(v)$,
                 evaluated at $v= \widehat{\theta}$ and multiplied by $\widehat{\gamma}$ with both random variables defined on a probability space $(\widehat{\Omega}, \widehat{\F}, \widehat{\PP})$. Taking expectation with respect to $\widehat{\PP}$, we get
                \begin{align}
                        \begin{split}
                                \label{eq:productU}
                                \widehat{\EE}\left[\partial_{\mu}  X_t^{\t\theta,[\theta]}(\widehat{\theta})\, \widehat{\gamma} \right] = \sum_{i=0}^d  \int_0^t \bigg\{  \partial V_i(\Xt_s, [X_s^{\theta}]) \,& \widehat{\EE}\left[\partial_{\mu}  X_s^{\t\theta,[\theta]}(\widehat{\theta})\, \widehat{\gamma} \right] 
                                 + \widehat{\EE} \widetilde{\EE} \left[ \partial_{\mu}V_i (\Xt_s, [X_s^{\theta}], \widetilde{ X}^{\hat{\theta}, [ \theta]}_s )  \partial_x  \widetilde{X}_s^{\widehat{\theta}, [\theta]} \, \widehat{\gamma} \right] \\
                                &  \quad + \widetilde{\EE} \left[ \partial_{\mu}V_i (\Xt_s, [X_s^{\theta}], \widetilde{ X}^{\t \theta}_s ) \widehat{\EE} \left[  \partial_{\mu}  \widetilde{X}_s^{\t\theta,[\theta]}(\widehat{\theta}) \, \widehat{\gamma} \right] \right] \bigg\}dB^i_s.
                        \end{split}
                \end{align}
                In the above equation, we are able to take $\widehat{\gamma}$ inside the It\^{o} integral with no problem since it is defined on a separate probability space to the Brownian motion, $B$. We are also able to interchange the order of the It\^{o} integral and  expectation with respect to $\widehat{\PP}$ using a stochastic Fubini theorem (see for example \cite[Theorem 65]{protter}).
                Again, since $(\widehat{\theta}, \widehat{\gamma})$ are defined on a separate probability space,
                \begin{align*}
                        \widehat{\EE} \widetilde{\EE} & \left[ \partial_{\mu}V_i (\Xt_s, [X_s^{\theta}], \widetilde{ X}^{\hat{\theta}, [ \theta]}_s )  \partial_x  \widetilde{X}_s^{\widehat{\theta}, [\theta]} \, \widehat{\gamma} \right]  
                        = \widetilde{\EE}  \left[ \partial_{\mu}V_i (\Xt_s, [X_s^{\theta}], \widetilde{ X}^{\t \theta}_s )   \partial_x  \widetilde{X}_s^{\tilde{\theta}, [\theta]}  \, \tilde{\gamma}  \right],
                \end{align*}
                which we can replace in equation \eqref{eq:productU} to get:
                \begin{align}
                        \begin{split}
                                \label{eq:productU2}
                                \widehat{\EE}\left[ \partial_{\mu}  X_t^{\t\theta,[\theta]}(\widehat{\theta}) \, \widehat{\gamma} \right] = \sum_{i=0}^d  \int_0^t \bigg\{  \partial & V_i(\Xt_s, [X_s^{\theta}]) \, \widehat{\EE}\left[ \partial_{\mu}  X_s^{\t\theta,[\theta]}(\widehat{\theta}) \, \widehat{\gamma} \right]  \\
                                & 
                                 + \widetilde{\EE} \left[ \partial_{\mu}V_i (\Xt_s, [X_s^{\theta}], \widetilde{ X}^{\t \theta}_s ) \left(\partial_x  \widetilde{X}_s^{\tilde{\theta}, [\theta]}  \, \tilde{\gamma}  + \widehat{\EE} \left[  \partial_{\mu}  \widetilde{X}_s^{\t\theta,[\theta]}(\widehat{\theta}) \, \widehat{\gamma} \right] \right) \right] \bigg\}dB^i_s.
                        \end{split}
                \end{align}
                Now, taking equation \eqref{eq:jac}, satisfied by $\partial_x \Xd_t$ and evaluating at $x= \theta$, multiplying by $\gamma$ and adding to equation \eqref{eq:productU}, we see that $ \partial_x X^{\theta,[\theta]}_t \gamma + \widehat{\EE}\left[ \partial_{\mu}  X_t^{\t\theta,[\theta]}(\widehat{\theta})\, \widehat{\gamma} \right] $ is equal to
                \begin{align*}
                         \gamma +  \sum_{i=0}^d \int_0^t  \bigg\{ &  \partial V_i(\Xd_s, [X_s^{\theta}]) \, \left( \partial_x X^{\theta,[\theta]}_s \gamma + \widehat{\EE}\left[ \partial_{\mu}  X_s^{\t\theta,[\theta]}(\widehat{\theta})\, \widehat{\gamma} \right] \right) \\
                        & + \widetilde{\EE}  \left[ \partial_{\mu}V_i (\Xd_s, [X_s^{\theta}], \widetilde{ X}^{\t \theta}_s ) \left(  \partial_x  \widetilde{X}_s^{\tilde{\theta}, [\theta]} \, \widetilde{\gamma}  + \widehat{\EE}\left[ \partial_{\mu}  \widetilde{X}_s^{\t\theta,[\theta]}(\widehat{\theta}) \widehat{\gamma}  \right] \right)  \right] \bigg\} dB^i_s.
                \end{align*}
                
                One can therefore see that the equation satisfied by $ \partial_x X^{\theta,[\theta]}_t \gamma + \widehat{\EE}\left[ \partial_{\mu}  X_t^{\t\theta,[\theta]}(\widehat{\theta})\, \widehat{\gamma} \right] $ is the same as equation \eqref{eq:FrechMKValt} satisfied by $D \Xt_t(\gamma)$, so by uniqueness they are equal. This representation also makes clear the linearity and continuity of $\gamma \mapsto D \Xt_t(\gamma)$.
                
                Following essentially the same procedure shows that
                 $\widehat{\EE} \left[\partial_{\mu} \Xd_t(\widehat{\theta}) \, \widehat{\gamma} \right]$ satisfies the same equation as $D \Xd_t(\gamma)$, so that \eqref{eq:Frechrep} holds. Hence, by definition $\partial_{\mu} \Xd_t(v)$ exists and satisfies equation \eqref{eq:partialmuX}. This representation also makes clear the linearity and continuity of $\gamma \mapsto D \Xd_t(\gamma)$.
                
                \item[(c)]
                Let $X^{\theta, n}$ denote the Picard approximation of the solution to the McKean-Vlasov SDE \eqref{eq:MKVSDE}, given by
                \begin{align*}
                        X^{\theta, 0}_t & =\theta, \quad t \in [0,T] \\
                        X^{\theta, n}_t &=  \theta +  \sum_{i=0}^d \int_0^t V_i \left(X^{\theta, n}_s, \left[X^{\theta, n-1}_s \right] \right) \, dB^i_s ,
                        \end{align*}    
                For each $n \geq 1$, $X^{\theta, n}$ is the solution of a classical SDE with time-dependent coefficients, which are differentiable in space, with each derivative of the coefficients being Lipschitz continuous. Therefore, by Nualart \cite{Nualart} Theorem
                2.2.1 $X^{\theta, n}_t \in \DD^{1, \infty}$ for all $t \in [0,T]$. The form of the equation satisfied by $\D X^{\theta, n}_t$ is the same as \eqref{eq:mall}. It is then easy to show that $ \|X^{\theta, n}_t\|_{\DD^{1, \infty}} < C(1  + \| \theta \|_2)$ uniformly in $n$. Now, since for all $p \geq 2$, $ \|X^{\theta, n}_t - X^{\theta}_t\|_p \to 0$ as $n \to \infty$, by Nualart  \cite{Nualart} Lemma 1.5.3, $ X^{\theta}_t \in \DD^{1, \infty}$. Similarly, $\Xd_t \in \DD^{1, \infty}$ since it solves a classical SDE with time-dependent coefficients.
                The measure term in the coefficients of the equation for $\Xd_t$ is deterministic, so $\D_r(\Xd_t)$ satisfies the usual equation for the Malliavin derivative of an SDE which is precisely equation \eqref{eq:mall}.
        \end{itemize}
\end{proof}
For our aplications, we need to extend the above result to higher order derivatives of $X^{x, [\theta]}_t$. The main result is summarised in the following theorem, which classifies $X^{x, [\theta]}_t$ as a Kusuoka-Stroock process.

\begin{theorem}
        \label{th:XisKSP}
        Suppose $\Vs \in \C^{k,k}_{b, Lip}(\RR^N \times \p_2(\RR^N);\RR^N)$, then $(t, x, [\theta]) \mapsto X_t^{x, [\theta]} \in \KK^1_{0}(\RR^N,k)$. If, in addition, $\Vs$ are uniformly bounded then $(t, x, [\theta]) \mapsto X_t^{x, [\theta]} \in \KK^0_{0}(\RR^N,k)$.
\end{theorem}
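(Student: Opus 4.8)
The plan is to argue by induction on the total order of differentiation $n\le k$, with the engine being a general stability principle for linear SDEs: a linear equation whose homogeneous coefficients are bounded and whose initial datum and inhomogeneous coefficients are themselves Kusuoka--Stroock processes again admits a solution in the Kusuoka--Stroock class, with controlled parameters. Granting this, I would obtain $(t,x,[\theta])\mapsto\Xd_t\in\KK^1_0(\RR^N,k)$ by repeatedly differentiating the defining equation \eqref{eq:decoupled} and observing that every derivative process solves exactly such a linear SDE.

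First I would record the base case. Proposition \ref{prop:MKVSDE} gives the moment bound \eqref{eq:decoupledmoment}, namely $\|\Xd_t\|_{\cS^p_T}\le C(1+|x|+\|\theta\|_2)$, which is precisely the $q=1$, $r=0$ estimate of Definition \ref{KSPDefn} for the undifferentiated process, while Proposition \ref{prop:firstderivs} supplies the first-order derivatives $\partial_x\Xd$, $\partial_\mu\Xd(v)$ and $\D_r\Xd$, their linear SDEs \eqref{eq:jac}, \eqref{eq:partialmuX} and \eqref{eq:mall}, and the membership $\Xd_t\in\DD^{1,\infty}$.

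For the inductive step I would assume that all mixed derivatives $\partial^\gamma_x\partial^\bb_\bv\partial^\alpha_\mu\Xd$ of order strictly below $n$ exist, are Malliavin smooth, solve linear SDEs, and satisfy \eqref{ksineq} with $q=1$, $r=0$, and then differentiate these equations once more in the relevant variable. Since $\Vs\in\C^{k,k}_{b,\mathrm{Lip}}$, every partial derivative of $V_i$ up to order $k$ is bounded and Lipschitz, so each new derivative process $Z$ satisfies
\[
Z_t=A+\sum_{i=0}^d\int_0^t\big[\partial V_i(\Xd_s,[\Xt_s])\,Z_s+C^i_s\big]\,dB^i_s,
\]
where $\partial V_i(\Xd_s,[\Xt_s])$ is bounded and the datum $A$ and remainders $C^i_s$ are finite sums of products of strictly lower-order derivative processes, controlled by the inductive hypothesis, multiplied by bounded derivatives of $V_i$. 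I would then apply Burkholder--Davis--Gundy and Gronwall to bound $\|Z\|_{\cS^p_T}$; differentiating $Z$ in the Malliavin sense and in $(x,\mu,\bv)$ produces further linear SDEs of the same shape and yields the full $\DD^{m,p}$-bounds of \eqref{ksineq}, while continuity in $(t,x,[\theta])$ in $L^p(\Omega)$ follows from the $L^p$-continuity of the coefficients exactly as in \eqref{eq:MKVLpreg}--\eqref{eq:MKVLpregtime}. No negative power of $t$ appears, so $r=0$, because at this stage one only differentiates and never integrates by parts; and the growth exponent remains $q=1$ because only the zeroth-order datum carries the factor $(1+|x|+\|\theta\|_2)$, all higher remainders having bounded inhomogeneous parts. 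For the final assertion, if the $\Vs$ are moreover uniformly bounded, then $\sigma$ in \eqref{eq:mall} and all inhomogeneous data become bounded, so the $\DD^{m,p}$-norms of the derivative processes no longer grow in $(x,\theta)$ and the classification sharpens to $\KK^0_0(\RR^N,k)$.

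The step I expect to be the main obstacle is the measure direction. Differentiating the McKean--Vlasov coupling repeatedly, through the Lions calculus of Section \ref{sec:diff} and the chain rule of \cite{crischassdel}, generates a proliferating family of remainder terms living on the auxiliary spaces $(\t\Omega,\t\PP)$ and $(\widehat\Omega,\widehat\PP)$: each differentiation in $\mu$ produces expectations over tilde-copies of derivative processes of both the decoupled flow $\Xd$ and the genuine McKean--Vlasov flow $\Xt$, whose law sits inside the coefficients, as already visible in \eqref{eq:partialmuX}. The bookkeeping of identifying which derivative of $V_i$ multiplies which product of lower-order tilde-space processes, and verifying that every such remainder again lands in the correct Kusuoka--Stroock class, is the delicate part. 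In particular the uniformity in $\bv$ required by the $\sup_\bv$ in \eqref{ksineq} must be propagated: the $\bv$-dependence enters only through processes such as $\partial_x\widetilde X^{v,[\theta]}_s$, which by Lemma \ref{lem:linearestimate} are bounded uniformly in $v$, and this uniform boundedness has to be shown to survive every product and every linear SDE arising in the induction.
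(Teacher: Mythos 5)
Your proposal follows essentially the same route as the paper: the "general stability principle for linear SDEs" you invoke is exactly what the paper isolates as Lemma \ref{lem:linearestimate} and Proposition \ref{prop:linearderivs} (a generic linear equation of the form \eqref{eq:LinearY} with tilde-space remainder terms, estimated via Burkholder--Davis--Gundy and Gronwall), and the induction on the total order $\alpha+|\bb|+\gamma$ with the $\cS^p_T$- and $\DD^{m,p}$-bounds propagated at each level, including the source of the $q=1$ growth in the Malliavin datum $\sigma(\Xd_r,[\Xt_r])$ and its disappearance when the $\Vs$ are bounded, is precisely the paper's argument in Appendix \ref{sec:KSpfs}. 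The proposal is correct.
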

Since each derivative process satisfies a linear equation (whose exact form is not important for our purposes) the proof is quite mechanical and reserved to the Appendix \ref{sec:KSpfs}.
Now we introduce some operators acting on Kusuoka-Stroock processes. These are the building blocks of the integration by parts formulae to come.
For the rest of this section, we will need the following uniform ellipticity assumption.
\begin{assumption}[UE]
        \label{ass:UE}
        Let $\sigma: \RR^N \times \p_2(\RR^N) \to \RR^{N \times d}$ be given by
        \[
        \sigma(z,\mu):= \left[ V_1(z,\mu) | \cdots | V_d(z, \mu) \right] .
        \]
        We make the assumption that there exists $\epsilon>0$ such that, for all $\xi \in \RR^N$, $z \in \RR^N$ and $\mu \in \p_2(\RR^N)$,
        \[
        \xi^{\top} \sigma(z, \mu) \sigma(z, \mu)^{\top} \xi \geq \epsilon | \xi |^2 .
        \]
\end{assumption}
Now, for a multi-index $\alpha$ on $\{1, \ldots, N\}$, we introduce the following operators acting on elements of $\KK^q_r(\RR,n)$, defined for $\alpha=(i)$, by
\begin{align*}
        & I^1_{(i)}(\Psi)(t,x, [\theta]) :=  \frac{1}{\sqrt{t}} \, \delta \left(r \mapsto \Psi(t,x, [\theta]) \,   \left( \sigma^{\top} (\sigma \sigma^{\top})^{-1} (X^{x,\mu}_r, [X_r^{\theta}]) \partial_x X^{x,\mu}_r \right)_{i} \right),  \\
        & I^2_{(i)}(\Psi)(t,x, [\theta]) := \sum_{j=1}^N I^1_{(j)} \left((\partial_x X^{x,\mu}_t)^{-1}_{j,i} \Psi(t,x, [\theta])\right), \\
        & I^3_{(i)}(\Psi)(t,x, [\theta]) := I^1_{(i)}(\Psi)(t,x,[\theta]) + \sqrt{t} \partial^i\Psi(t,x,[\theta]), \\
        & \I^1_{(i)} (\Psi)(t,x, [\theta],v_1) :=  \\ 
        & \phantom{XXXX} \frac{1}{\sqrt{t}} \, \delta\left( r \mapsto \left( \sigma^{\top} (\sigma \sigma^{\top})^{-1} (X^{x,\mu}_r, [X_r^{\theta}]) \partial_x X^{x,\mu}_r \, (\partial_x X^{x,\mu}_t)^{-1}  \partial_{\mu} \Xd_t(v_1) \right)_{i} \Psi(t,x, [\theta]) \right),  \\
        & \I^3_{(i)}(\Psi)(t,x, [\theta],v_1) := \I^1_{(i)}(\Psi)(t,x, [\theta],v_1) +  \sqrt{t} (\partial_{\mu} \Psi)_{i}(t,x, [\theta],v_1).
\end{align*}
For $\alpha = (\alpha_1, \ldots, \alpha_n)$ we inductively define
\begin{align*}
        & I^1_{\alpha}:= I^1_{\alpha_n} \circ I^1_{\alpha_{n-1}} \circ \cdots \circ I^1_{\alpha_1} ,
\end{align*}
and make analogous definitions for each of the other operators. The following result states that these operators are well-defined and describes how each operator transforms a given Kusuoka-Stroock process. The proof is contained in Appendix \ref{sec:KSpfs}.

\begin{proposition}
        \label{prop:defwieghts}
        If $\Vs \in \C^{k,k}_{b,Lip}(\RR^N \times \p_2(\RR^N);\RR^N)$, (UE) holds and $\Psi \in \KK^q_r(\RR,n)$, then $I^1_{\alpha}(\Psi)$ and $I^3_{\alpha}(\Psi)$, are all well-defined for $|\alpha|\leq (k \wedge n)$. $I^2_{\alpha}(\Psi)$, $\I^1_{\alpha}(\Psi)$ and $\I^3_{\alpha}(\Psi)$ are well defined for $|\alpha|\leq n \wedge (k-2)$. Moreover,
        \begin{align*}
                I^1_{\alpha}(\Psi),  I^3_{\alpha}(\Psi) & \in  \KK^{q+2|\alpha|}_r(\RR,(k \wedge n)-|\alpha|) , \\
                I^2_{\alpha}(\Psi) & \in \KK^{q+3|\alpha|}_r(\RR,[n \wedge (k-2)]-|\alpha|) , \\
                \I^1_{\alpha}(\Psi), \I^3_{\alpha}(\Psi) & \in \KK^{q+4|\alpha|}_r(\RR,[n \wedge (k-2) ]-|\alpha|).
        \end{align*}
        If $\Psi \in \KK^0_r(\RR,n)$ and $\Vs$ are uniformly bounded, then
        \begin{align*}
                I^1_{\alpha}(\Psi),  I^3_{\alpha}(\Psi) & \in  \KK^{0}_r(\RR,(k \wedge n)-|\alpha|) , \\
                I^2_{\alpha}(\Psi) & \in \KK^{0}_r(\RR,[n \wedge (k-2)]-|\alpha|) , \\
                \I^1_{\alpha}(\Psi), \I^3_{\alpha}(\Psi) & \in \KK^{0}_r(\RR,[n \wedge (k-2) ]-|\alpha|).
        \end{align*}
        
\end{proposition}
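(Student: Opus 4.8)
The plan is to reduce the statement to a small ``calculus'' of Kusuoka--Stroock processes: I would first record that every elementary factor occurring inside the operators is itself a Kusuoka--Stroock process of a known class, and then show how the three operations out of which $I^1,I^2,I^3,\I^1,\I^3$ are built --- pointwise multiplication, Skorohod integration against $\frac1{\sqrt t}$, and differentiation in the parameters $(x,[\theta],\bv)$ --- act on the triple of indices $(q,r,M)$. By Theorem \ref{th:XisKSP}, $\Xd\in\KK^1_0(\RR^N,k)$ (and $\KK^0_0(\RR^N,k)$ when the $V_i$ are bounded), and the linear equations satisfied by $\partial_x\Xd$, its inverse $(\partial_x\Xd)^{-1}$, $\D\Xd$ and $\partial_\mu\Xd(v)$ (Proposition \ref{prop:firstderivs}) have coefficients $\partial V_i$ that are bounded and $k-1$ times differentiable, so each of these derivative processes belongs to a class $\KK^{q'}_{0}(\,\cdot\,,k-1)$; finally, Assumption \ref{ass:UE} guarantees that $a:=\sigma^{\top}(\sigma\sigma^{\top})^{-1}$ is smooth with $(\sigma\sigma^{\top})^{-1}$ bounded, so that the composition $a(\Xd_r,[\Xt_r])$ is again a Kusuoka--Stroock process (bounded, hence $q'=0$, precisely when the $V_i$ are bounded). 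These facts are what the auxiliary results preceding the proposition supply.

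The heart of the argument is three stability lemmas for the class $\KK^q_r$, each proved by differentiating under the relevant integral and applying H\"older's inequality. \emph{Multiplication}: if $\Psi_1\in\KK^{q_1}_{r_1}(\RR,M)$ and $\Psi_2\in\KK^{q_2}_{r_2}(\RR,M)$ then $\Psi_1\Psi_2\in\KK^{q_1+q_2}_{r_1+r_2}(\RR,M)$, by the Leibniz rule for $\partial_x,\partial_\mu,\partial_{\bv}$ and $\D$ together with H\"older applied to the $\DD^{m,p}$ norms. \emph{Skorohod integration}: if $u_r=\Psi(t,x,[\theta])\,w_r$ with $w$ a weight process as above, then $\frac1{\sqrt t}\,\delta(u)$ lies in $\KK$ with $M$ decreased by one and $r$ unchanged. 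This is the crucial step: parameter derivatives commute with $\delta$, so $\partial^{\gamma}_x\partial^{\alpha}_\mu\delta(u)=\delta(\partial^{\gamma}_x\partial^{\alpha}_\mu u)$; Meyer's inequality bounds $\|\delta(\cdot)\|_{\DD^{m,p}}$ by $\|\cdot\|_{\DD^{m+1,p}(H)}$, which consumes one Malliavin derivative and forces $M\mapsto M-1$; and the length-$t$ time interval produces a factor $t^{1/2}$ in the $H=L^2([0,t])$ norm, raising $r$ by one, which is exactly cancelled by the prefactor $\frac1{\sqrt t}$. \emph{Differentiation}: $\partial_x$ and $\partial_\mu$ map $\KK^{q}_r(\,\cdot\,,M)$ into a class with $M$ lowered by one (used for the additive terms $\sqrt t\,\partial^i\Psi$ and $\sqrt t\,(\partial_\mu\Psi)_i$ in $I^3$ and $\I^3$).

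With these in hand I would treat a single index $\alpha=(i)$ first. For $I^1_{(i)}$, combine the multiplication rule (to see $\Psi\,a(\Xd_r,[\Xt_r])\,\partial_x\Xd_r$ is a Kusuoka--Stroock process) with the Skorohod rule to land in $\KK^{q+2}_{r}(\RR,(k\wedge n)-1)$, the $+2$ being the polynomial-growth cost of the weight $a\,\partial_x\Xd$ recorded in the preceding corollaries, and the drop to $k\wedge n$ reflecting both the $\C^{k}$-regularity of the coefficients and Meyer's loss of one derivative. The operator $I^3_{(i)}$ adds $\sqrt t\,\partial^i\Psi$, which by the differentiation rule lies in the same class, so it inherits the class of $I^1_{(i)}$. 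The operators $I^2_{(i)},\I^1_{(i)},\I^3_{(i)}$ additionally carry the factors $(\partial_x\Xd_t)^{-1}$ and $\partial_\mu\Xd_t(v)$; because controlling these in $\DD^{m,p}$ requires two further derivatives of the flow, their admissible range is $|\alpha|\le n\wedge(k-2)$, and their growth costs are $+3$ for $I^2$ and $+4$ for $\I^1,\I^3$. The general case follows by induction on $|\alpha|$: writing $I^1_\alpha=I^1_{\alpha_{|\alpha|}}\circ\cdots\circ I^1_{\alpha_1}$ and applying the one-step result $|\alpha|$ times, the increments $+2$, the unchanged $r$, and the decrements of $M$ accumulate to the stated exponents; the bounded-coefficient statement is identical except that every weight has $q'=0$, so all growth exponents stay at $0$.

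The main obstacle is the Skorohod step: one must check that $\Psi\,w\in\DD^{M,\infty}(H)$ so that $\delta$ is defined, verify rigorously that the parameter derivatives $\partial_x,\partial_\mu,\partial_{\bv}$ (the last understood through the lifting of the measure argument) genuinely commute with $\delta$ and with $\D$, and extract from Meyer's inequality a bound that is simultaneously uniform in $\bv$, carries the correct power $t^{r/2}$, and has polynomial growth of the advertised order in $(x,\|\theta\|_2)$. Keeping the three indices $(q,r,M)$ correctly coupled through each composition --- in particular ensuring the budget $M$ never goes negative over the allowed range of $|\alpha|$ --- is where the bookkeeping must be done carefully.
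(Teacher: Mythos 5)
Your proposal is correct and follows essentially the same route as the paper: the paper's proof likewise first places the elementary weight factors $\sigma^{\top}(\sigma\sigma^{\top})^{-1}(X^{x,[\theta]}_{\cdot},[X^{\theta}_{\cdot}])\,\partial_x X^{x,[\theta]}_{\cdot}\mathbf{1}_{[0,t]}$, $(\partial_x X^{x,[\theta]}_t)^{-1}$ and $(\partial_x X^{x,[\theta]}_t)^{-1}\partial_{\mu}X^{x,[\theta]}_t$ into explicit Kusuoka--Stroock classes (Proposition \ref{prop:KSprocesses}), then invokes exactly the stability rules you describe for products, Skorohod integration and parameter differentiation (Lemma \ref{KSP}), and finally handles $|\alpha|=1$ for each operator and iterates. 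Your index bookkeeping ($+2$, $+3$, $+4$ in $q$; $r$ preserved by the cancellation between $\mathbf{1}_{[0,t]}$ and the prefactor $t^{-1/2}$; the drop to $k-2$ coming from the second derivatives of the coefficients entering the equation for $(\partial_x X)^{-1}$) matches the paper's.
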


\section{Integration by parts formulae for the de-coupled equation}
\label{sec:IBPdecoupled}

Having introduced some operators acting on Kusuoka-Stroock processes, we now show how to use these operators to construct Malliavin weights in integration by parts formulas. We first develop integration by parts formulas for derivatives of $x \mapsto \EE \, f(\Xd_t)$ and then separately $[\theta] \mapsto\EE \, f(\Xd_t)$. In the last part of this section, we will show how to combine these results to construct integration by parts formulas for derivatives of the function $x \mapsto \EE \,f(X^{x,\delta_x}_t)$.

\subsection{Integration by parts in the space variable}
\label{sec:IBPFinx}

\begin{proposition}
        \label{th:IBPx1}
        Let $f \in \C^{\infty}_b(\RR^N;\RR)$ and $\Psi \in \KK^q_r(\RR,n)$
        \begin{enumerate} 
        \item
        If $|\alpha| \leq [n \wedge k]$, then
        \[
        \EE[\partial^{\alpha}_x(f(\Xd_t))\, \Psi(t,x, [\theta])] = t^{-|\alpha|/2} \, \EE [f(\Xd_t)\, I^1_{\alpha}(\Psi)(t,x, [\theta])] .
        \]
        \item If $|\alpha| \leq [n \wedge (k-2)]$, then
                \[
                \EE[(\partial^{\alpha}f)(\Xd_t)\, \Psi(t,x, [\theta])] = t^{-|\alpha|/2} \, \EE [f(\Xd_t)\, I^2_{\alpha}(\Psi)(t,x, [\theta])] .
                \]
                \item If $|\alpha| \leq [n \wedge k]$, then
                        \[
                        \partial^{\alpha}_x \,  \EE[f(\Xd_t)\, \Psi(t,x, [\theta])] = t^{-|\alpha|/2} \, \EE [f(\Xd_t)\, I^3_{\alpha}(\Psi)(t,x, [\theta])] .
                        \]
        \item
        If $|\alpha| + |\beta| \leq [n \wedge (k-2)]$, then
                \[
                \partial^{\alpha}_x \,  \EE[(\partial^{\beta} f)(\Xd_t)\, \Psi(t,x, [\theta])] = t^{-(|\alpha|+ |\beta|)/2} \, \EE [f(\Xd_t)\, I^3_{\alpha}\left(I^2_{\beta}(\Psi)\right)(t,x, [\theta])] .
                \]
        \end{enumerate}
\end{proposition}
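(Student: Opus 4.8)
The plan is to derive part (4) purely by composing the two identities already established in parts (2) and (3), so that no fresh Malliavin or Skorohod computation is needed; the real content is arranging the two steps in the right order and tracking the Kusuoka-Stroock indices so that the intermediate weight remains admissible.

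First I would apply part (2) with the multi-index $\beta$ in the role of $\alpha$. Since $|\beta| \le |\alpha| + |\beta| \le [n \wedge (k-2)]$, part (2) is applicable and yields, as an identity of functions of $x$ (with $t$ and $[\theta]$ held fixed),
\[
\EE[(\partial^{\beta} f)(\Xd_t)\, \Psi(t,x, [\theta])] = t^{-|\beta|/2} \, \EE [f(\Xd_t)\, I^2_{\beta}(\Psi)(t,x, [\theta])].
\]
By Proposition \ref{prop:defwieghts}, the new weight satisfies $I^2_\beta(\Psi) \in \KK^{q+3|\beta|}_r(\RR, m)$, where $m := [n \wedge (k-2)] - |\beta|$. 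I would then differentiate this identity $|\alpha|$ times in $x$: because it is a pointwise identity in $x$ and the right-hand side is $\partial^\alpha_x$-differentiable (which is exactly what part (3) supplies below), the $\partial^\alpha_x$-derivatives of the two sides coincide, giving
\[
\partial^{\alpha}_x \,\EE[(\partial^{\beta} f)(\Xd_t)\, \Psi(t,x,[\theta])] = t^{-|\beta|/2}\,\partial^{\alpha}_x\, \EE [f(\Xd_t)\, I^2_{\beta}(\Psi)(t,x,[\theta])].
\]
Next I would invoke part (3) with the process $I^2_\beta(\Psi) \in \KK^{q+3|\beta|}_r(\RR,m)$ in the role of $\Psi$ (and with the still-smooth function $f \in \C^\infty_b$). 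Its regularity condition is $|\alpha| \le [m \wedge k]$; since $m = [n\wedge(k-2)] - |\beta| \le k-2 < k$ we have $m \wedge k = m$, and the hypothesis $|\alpha| + |\beta| \le [n \wedge (k-2)]$ gives precisely $|\alpha| \le m$, so part (3) applies and produces
\[
\partial^{\alpha}_x\, \EE [f(\Xd_t)\, I^2_{\beta}(\Psi)(t,x,[\theta])] = t^{-|\alpha|/2}\,\EE [f(\Xd_t)\, I^3_{\alpha}(I^2_{\beta}(\Psi))(t,x,[\theta])].
\]

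Combining the two displays and collecting the powers of $t$ produces the factor $t^{-(|\alpha|+|\beta|)/2}$ together with the weight $I^3_{\alpha}(I^2_{\beta}(\Psi))$, which is exactly the asserted formula; its membership in $\KK^{q+2|\alpha|+3|\beta|}_r(\RR, m')$ with $m' = [n\wedge(k-2)] - |\alpha| - |\beta|$ then follows by applying the $I^3$ mapping statement of Proposition \ref{prop:defwieghts} to $I^2_\beta(\Psi)$. I expect no genuine obstacle here: the only two points requiring care are the index bookkeeping just described — namely confirming $I^2_\beta(\Psi)$ retains $m \ge |\alpha|$ differentiation orders to feed into part (3) — and the justification for differentiating the part (2) identity in $x$, which is immediate once part (3) certifies that the right-hand side is smooth in $x$.
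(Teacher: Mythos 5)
Your proposal is correct and matches the paper's own argument: the paper proves part 4 simply by composing parts 2 and 3 in exactly the order you describe, and your index bookkeeping (checking $|\alpha| \leq m = [n \wedge (k-2)] - |\beta|$ so that $I^2_{\beta}(\Psi)$ remains admissible as input to part 3) is the correct justification.
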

\begin{proof}
\begin{enumerate}
\item
        First, we note that equation  \eqref{eq:jac} satisfied by $\partial_x X^{x,[\theta]}_t$ and equation \eqref{eq:mall} satisfied by  $\D_r X^{x,[\theta]}_t$ are the same except their initial conditions. It therefore follows that for $r \leq t$,
        \[
        \partial_x X^{x,[\theta]}_t = \D_r X^{x,[\theta]}_t \sigma^{\top} (\sigma \sigma^{\top})^{-1} (X^{x,[\theta]}_r, [X_r^{\theta}]) \partial_x X^{x,[\theta]}_r .
        \]
        This allows us to make the following computations for $f \in \C^{\infty}_b(\RR^N;\RR)$,
        \begin{align*}
                &\EE \left[ \partial_x [f(\Xd_t)] \Psi(t,x, [\theta]) \right]\\
                = & \EE \left[ \partial f (X_t^{x,[\theta]} ) \,\partial_x X^{x,[\theta]}_t\, \Psi(t,x, [\theta]) \right] \\
                = & \frac{1}{t} \EE \left[ \int_0^t \partial f (X_t^{x,[\theta]}) \, \partial_x X^{x,[\theta]}_t  \Psi(t,x, [\theta]) \, dr \right] \\
                = & \frac{1}{t} \EE \left[ \int_0^t  \partial f (X_t^{x,[\theta]}) \,  \D_r X^{x,[\theta]}_t  \sigma^{\top} (\sigma \sigma^{\top})^{-1} (X^{x,[\theta]}_r, [X_r^{\theta}]) \partial_x X^{x,[\theta]}_r  \Psi(t,x, [\theta]) \, dr \right] \\
                = & \frac{1}{t} \EE \left[ \int_0^t  \D_r f (X_t^{x,[\theta]})  \, \sigma^{\top} (\sigma \sigma^{\top})^{-1} (X^{x,[\theta]}_r, [X_r^{\theta}]) \partial_x X^{x,[\theta]}_r \, \Psi(t,x, [\theta]) \, dr \right]        \\
                = & \frac{1}{t} \EE \left[  f (X_t^{x,[\theta]}) \,  \delta\left( r \mapsto \left( \sigma^{\top} (\sigma \sigma^{\top})^{-1} (X^{x,[\theta]}_r, [X_r^{\theta}]) \partial_x X^{x,[\theta]}_r   \right)^{\top} \Psi(t,x, [\theta]) \right) \right],
        \end{align*}
        where we have used Malliavin integration by parts
        $
        \EE \langle \D \phi, u \rangle_{H_d} = \EE \left[ \phi \, \delta(u) \right]
        $ in the last line. This proves the result for $|\alpha|=1$. By Proposition \ref{prop:defwieghts}, $I^1_{\alpha}(\Psi) \in  \KK^{q+2}_r(\RR,(k \wedge n)-1) $ when $|\alpha|=1$. We can therefore iterate this argument another $|\alpha|-1$ times to obtain the result for all $\alpha$ satisfying $|\alpha| \leq [n \wedge k]$.
        \item
                By the chain rule, 
                \begin{align*}
                        \EE[(\partial^{i} f)(\Xd_t) \Psi(t,x, [\theta])] 
                        & = \sum_{j=1}^N \EE[\partial_{x_i} (f(\Xd_t)) \left( (\partial_x X^{x,[\theta]}_t)^{-1} \right)^{j,i} \Psi(t,x, [\theta])]  \\
                        & = t^{-1/2} \, \sum_{j=1}^N \EE \left[f(\Xd_t) I^1_{(j)} \left(\left((\partial_x X^{x,[\theta]}_t)^{-1} \right)^{j,i} \Psi(t,x, [\theta])\right) \right] \\
                        & = t^{-1/2} \,\EE [f(\Xd_t)\, I^2_{(i)}(\Psi)(t,x, [\theta])] .                                                            
                \end{align*}
                By Proposition \ref{prop:defwieghts}, $I^2_{(i)}(\Psi) \in \KK^{q+3}_r \left(\RR,[n \wedge (k-2)]-1 \right)$, so since  $|\alpha| \leq [n \wedge (k-2)]$, we can apply this argument another $|\alpha|-1$ times to get the result.
                \item
                        We compute, for any $i=1, \ldots, N$ 
                        \begin{align*}
                                \partial^{i}_x \, \EE[ f(\Xd_t) \Psi(t,x, [\theta])] & = \EE[\partial^i_x (f(\Xd_t)\Psi(t,x, [\theta]) + \partial^i_x\Psi(t,x, [\theta]) f(\Xd_t)  ] \\
                                & = t^{-1/2} \EE \left[ f(\Xd_t) \left\{ I^1_{(i)}(\Psi)(t,x, [\theta]) + \sqrt{t} \partial_x^i\Psi(t,x, [\theta])  \right\} \right],
                        \end{align*}
                        which proves the result for $|\alpha|=1$.
                        Again, using  Proposition \ref{prop:defwieghts}, $I^3_{\alpha}(\Psi) \in  \KK^{q+2}_r(\RR,(k \wedge n)-1) $ when $|\alpha|=1$. We can therefore iterate this argument another $|\alpha|-1$ times to obtain the result for all $\alpha$ satisfying $|\alpha| \leq [n \wedge k]$.
                        \item This follows from parts 2 and 3.
                \end{enumerate}
\end{proof}

\subsection{Integration by parts in the measure variable}
\label{sec:IBPFinmu}

We now consider derivatives of the function
$
[\theta] \mapsto \EE [f (\Xd_t)] .
$

\begin{proposition}
        \label{th:IBPmu1}
        Let $f \in \C^{\infty}_b(\RR^N;\RR)$ and $\Psi \in \KK^q_r(\RR,n)$.
        \begin{enumerate}
        \item If $|\beta| \leq [n \wedge (k-2)]$, then
        \[
        \EE[\partial^{\beta}_{\mu}(f(\Xd_t))(\bv)\, \Psi(t,x, [\theta])] = t^{-|\beta|/2} \,  \EE [f(\Xd_t)\, \I^1_{\beta}(\Psi)(t,x, [\theta], \bv)] .
        \]
        \item If $|\beta| \leq[n \wedge (k-2)]$, then
                \[
                \partial^{\beta}_{\mu} \,       \EE[f(\Xd_t)\, \Psi(t,x, [\theta])](\bv) = t^{-|\beta|/2} \,  \EE [f(\Xd_t)\, \I^3_{\beta}(\Psi)(t,x, [\theta], \bv)] .
                \]
                \item
                If $|\alpha| + |\beta| \leq [n \wedge (k-2)]$,  then
                        \[
                        \partial^{\beta}_{\mu} \,       \EE[(\partial^{\alpha}f)(\Xd_t)\, \Psi(t,x, [\theta])](\bv) = t^{-(|\alpha|+|\beta|)/2}  \EE [f(\Xd_t)\, \I^3_{\beta}\left(I^2_{\alpha}(\Psi)\right)(t,x, [\theta], \bv)] .
                        \]
        \end{enumerate}
\end{proposition}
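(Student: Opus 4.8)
The plan is to mirror the proof of Proposition \ref{th:IBPx1}, replacing the space-variable flow by the measure derivative and carrying out the Malliavin integration by parts in exactly the same way. The one genuinely new ingredient is that the measure derivative $\partial_\mu \Xd_t(v)$ — which is not itself driven by any noise — can be rewritten through the Malliavin derivative $\D_r\Xd_t$, and it is this rewriting that ultimately produces the smoothing in the measure direction.

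First I would establish part 1 for $|\beta|=1$. Starting from the identity $\partial_x \Xd_t = \D_r \Xd_t\,\sigma^\top(\sigma\sigma^\top)^{-1}(\Xd_r,[\Xt_r])\,\partial_x\Xd_r$ for $r\le t$ obtained in the proof of Proposition \ref{th:IBPx1}(1), and inserting the trivial factorisation $\partial_\mu\Xd_t(v) = \partial_x\Xd_t\,(\partial_x\Xd_t)^{-1}\partial_\mu\Xd_t(v)$, I obtain
\[
\partial_\mu\Xd_t(v) = \D_r\Xd_t\,u_r(v), \qquad u_r(v):=\sigma^\top(\sigma\sigma^\top)^{-1}(\Xd_r,[\Xt_r])\,\partial_x\Xd_r\,(\partial_x\Xd_t)^{-1}\partial_\mu\Xd_t(v),
\]
and $u_r(v)$ is exactly the integrand appearing in the definition of $\I^1_{(i)}$. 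Using the chain rule for the Lions derivative, $\partial_\mu(f(\Xd_t))(v)=\partial f(\Xd_t)\,\partial_\mu\Xd_t(v)$, then averaging over $r$ through the factor $\frac1t\int_0^t\,dr$, applying the Malliavin chain rule to turn $\partial f(\Xd_t)\,\D_r\Xd_t$ into $\D_r(f(\Xd_t))$, and finally the duality $\EE\langle\D\phi,u\rangle=\EE[\phi\,\delta(u)]$, I arrive at $\EE[\partial_\mu(f(\Xd_t))(v)\,\Psi]=t^{-1/2}\EE[f(\Xd_t)\,\I^1_{(i)}(\Psi)(v)]$. Since Proposition \ref{prop:defwieghts} gives $\I^1_{(i)}(\Psi)\in\KK$, I would then iterate as in Proposition \ref{th:IBPx1}(1); although the higher-order Lions chain rule for $\partial^\beta_\mu(f(\Xd_t))$ produces several terms involving higher derivatives of both $f$ and of $\Xd_t$, these are precisely reproduced by the nested Skorohod integrals defining $\I^1_\beta$, and Proposition \ref{prop:defwieghts} ensures every intermediate weight stays in $\KK$, yielding part 1 for $|\beta|\le[n\wedge(k-2)]$.

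For part 2 the essential step is differentiation of the expectation in the measure direction. Working through the lifting $\t\theta\mapsto\EE[f(\Xd_t)\,\Psi(t,x,[\theta])]$ on $L^2$, applying the Fréchet product and chain rules, using the representation \eqref{eq:Frechrep} for $D\Xd_t$ together with the analogous representation for $D\Psi$ (available since $\Psi\in\KK$), and then reading off the Lions derivative by matching against $\widetilde{\EE}[\,\cdot\,\t\gamma]$, I get
\[
\partial_\mu\EE[f(\Xd_t)\Psi](v)=\EE[\partial_\mu(f(\Xd_t))(v)\,\Psi]+\EE[f(\Xd_t)\,\partial_\mu\Psi(v)].
\]
Substituting the base case of part 1 into the first term and recalling $\I^3_{(i)}(\Psi)=\I^1_{(i)}(\Psi)+\sqrt t\,(\partial_\mu\Psi)_i$ gives $\partial_\mu\EE[f(\Xd_t)\Psi](v)=t^{-1/2}\EE[f(\Xd_t)\,\I^3_{(i)}(\Psi)(v)]$. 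Because the right-hand side is again of the form $\EE[f(\Xd_t)\,\widehat\Psi]$ with $\widehat\Psi=t^{-1/2}\I^3_{(i)}(\Psi)\in\KK$, I can reapply this identity, and iterating $|\beta|$ times yields part 2. Part 3 follows at once: Proposition \ref{th:IBPx1}(2) gives $\EE[(\partial^\alpha f)(\Xd_t)\Psi]=t^{-|\alpha|/2}\EE[f(\Xd_t)\,I^2_\alpha(\Psi)]$ as an identity between functions of $[\theta]$, so applying $\partial^\beta_\mu$ and invoking part 2 with $I^2_\alpha(\Psi)\in\KK$ in place of $\Psi$ produces $t^{-(|\alpha|+|\beta|)/2}\EE[f(\Xd_t)\,\I^3_\beta(I^2_\alpha(\Psi))(\bv)]$, the hypothesis $|\alpha|+|\beta|\le[n\wedge(k-2)]$ being exactly what Proposition \ref{prop:defwieghts} needs for all intermediate weights to be well-defined.

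I expect the main obstacle to be the rigorous justification of part 2, namely interchanging the measure derivative with the expectation. This is delicate because the law $[\theta]$ enters $\EE[f(\Xd_t)\Psi]$ in two distinct ways, through the coefficients of $\Xd_t$ and through $\Psi$, so the argument must pass through the lifting, keep careful track of the auxiliary spaces $(\t\Omega,\t\F,\t\PP)$ and $(\widehat\Omega,\widehat\F,\widehat\PP)$, and verify that \eqref{eq:Frechrep} applies to both factors; the requisite limit interchanges (dominated convergence for the Fréchet limit and the stochastic Fubini argument already used in the proof of Proposition \ref{prop:firstderivs}(b)) rely on the moment and continuity bounds built into Definition \ref{KSPDefn}. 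By contrast, the base identity $\partial_\mu\Xd_t(v)=\D_r\Xd_t\,u_r(v)$, although it is conceptually the heart of the smoothing phenomenon, is an easy consequence of the space-variable identity once the latter is in hand.
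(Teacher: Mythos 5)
Your proposal is correct and follows essentially the same route as the paper: part 1 via the identity $\partial_\mu\Xd_t(v)=\D_r\Xd_t\,\sigma^{\top}(\sigma\sigma^{\top})^{-1}(\Xd_r,[X_r^{\theta}])\,\partial_x\Xd_r\,(\partial_x\Xd_t)^{-1}\partial_\mu\Xd_t(v)$, averaging over $r$ and applying Malliavin duality to produce $\I^1_{\beta}$; part 2 via the product rule for $\partial_\mu$ combined with part 1 and the definition of $\I^3$; and part 3 by first applying Proposition \ref{th:IBPx1}(2) and then part 2 to the resulting weight $I^2_{\alpha}(\Psi)$. The extra care you devote to justifying the interchange of $\partial_\mu$ with the expectation through the lifting is detail the paper elides, not a different argument.
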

\begin{proof}
\begin{enumerate}
\item
        We use again that for $r \leq t$,
        \[
        \partial_x X^{x,[\theta]}_t = \D_r X^{x,[\theta]}_t \sigma^{\top} (\sigma \sigma^{\top})^{-1} (X^{x,[\theta]}_r, [X_r^{\theta}]) \partial_x X^{x,\mu}_r .
        \]
        This allows us to make the following computations for $f \in \C^{\infty}_b(\RR^N;\RR)$,
        \begin{align*}
                & \EE \left[ \partial_{\mu}(f(\Xd_t)) \Psi(t,x, [\theta]) \right] \\
                & = \EE \left[ \partial f (X_t^{x, [\theta]} ) \, \partial_{\mu} \Xd_t \, \Psi(t,x, [\theta]) \right] \\
                & = \frac{1}{t} \EE \left[ \int_0^t \partial f (X_t^{x, [\theta]}) \,  \,\partial_x X^{x, [\theta]}_t \, (\partial_x X^{x, [\theta]})^{-1}_t \partial_{\mu} \Xd_t(v) \Psi(t,x, [\theta]) \, dr \right] \\
                &= \frac{1}{t} \EE  \int_0^t \big\{ \partial f (X_t^{x, [\theta]}) \,  \D_r X^{x, [\theta]}_t  \sigma^{\top} (\sigma \sigma^{\top})^{-1} (X^{x, [\theta]}_r, [X_r^{\theta}]) \partial_x X^{x, [\theta]}_r  (\partial_x X^{x, [\theta]}_t)^{-1} 
         \partial_{\mu} \Xd_t(v) \Psi(t,x, [\theta]) \big\} dr  \\
                & = \frac{1}{t} \EE  \int_0^t \big\{  \D_r f (X_t^{x, [\theta]})  \, \sigma^{\top} (\sigma \sigma^{\top})^{-1} (X^{x, [\theta]}_r, [X_r^{\theta}]) \partial_x X^{x, [\theta]}_r \, (\partial_x X^{x, [\theta]}_t)^{-1}  \partial_{\mu} \Xd_t(v) \Psi(t,x, [\theta]) \big\} dr  \\
                & = \frac{1}{t} \EE \big[  f (X_t^{x, [\theta]}) \,  \delta\big( r \mapsto \left( \sigma^{\top} (\sigma \sigma^{\top})^{-1} (X^{x, [\theta]}_r, [X_r^{\theta}]) \partial_x X^{x, [\theta]}_r \, (\partial_x X^{x, [\theta]}_t)^{-1}  \partial_{\mu} \Xd_t(v) \right)^{\top} \ \Psi(t,x, [\theta]) \big) \big] .
        \end{align*}
        where we have used Malliavin integration by parts
        $
        \EE \langle \D \phi, u \rangle_{H_d} = \EE \left[ \phi \, \delta(u) \right]
        $ in the last line.
        This proves the claim for $|\beta|=1$. For general $\beta$, it follows by iterating this integration by parts $|\beta|$ times.
        \item   \[
                \partial_{\mu} \,       \EE[f(\Xd_t)\, \Psi(t,x, [\theta])](v) = t^{-|\beta|/2} \, \EE [\partial_{\mu} (f(\Xd_t))(v) \, \Psi(t,x, [\theta]) + f(\Xd_t) \, \partial_{\mu}\Psi(t,x, [\theta],v)] .
                \]
                This is enough to prove the proposition when $|\beta|=1$. For $|\beta|>1$, simply repeat this argument.
        \item This follows from parts 1 and 2.
        \end{enumerate}
\end{proof}

\subsection{Integration by parts for McKean-Vlasov SDE with fixed initial condition }
\label{sec:MKVIBP}
We now consider developing integration by parts formulae for derivatives of the function
\[
x \mapsto \EE f(X_t^{x, \delta_x}).
\]
We introduce the following operator acting on elements of $\K_r^q(\RR, M)$, the set of Kusuoka-Stroock processes on $\RR^N$. For $\alpha=(i)$
\[
J_{(i)}(\Phi)(t,x):= I^3_{(i)}(\Phi)(t,x,\delta_x) +  \I^3_{(i)}(\Phi)(t,x,\delta_x)
\]
and inductively, for $\alpha=(\alpha_1, \ldots, \alpha_n)$,
\[
J_{\alpha}:= J_{\alpha_n} \circ J_{\alpha_1} \cdots \circ J_{\alpha_1}.
\]

\begin{lemma}
        \label{lem:Jprops}
        If $\Vs \in \C^{k,k}_{b,Lip}(\RR^N \times \p_2(\RR^N);\RR^N)$ and $\Phi \in \K^q_r(\RR,n)$, then $J_{\alpha}(\Phi)$ is well-defined for $|\alpha|\leq [n \wedge (k-2)]$, 
        and
        \[
        J_{\alpha}(\Phi) \in \K^{q+4|\alpha|}_r(\RR,[n \wedge (k-2)]-|\alpha|).
        \]
        Moreover, if $\Phi \in \K^0_r(\RR,k)$ and $\Vs$ are uniformly bounded, then
        \[
        J_{\alpha}(\Phi) \in \K^{0}_r(\RR,[n \wedge (k-2)]-|\alpha|).
        \]
\end{lemma}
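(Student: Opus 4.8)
The plan is to reduce everything to the transformation rules for the operators $I^3$ and $\I^3$ already established in Proposition \ref{prop:defwieghts}, together with the conversion Lemma \ref{lem:conversion}. The one preliminary observation I need is that any $\Phi \in \K^q_r(\RR,n)$ may be regarded as an element of $\KK^q_r(\RR,n)$ by viewing it as constant in the measure argument: setting $\tilde\Phi(t,x,\mu):=\Phi(t,x)$, all $\partial_\mu$ and $\partial_\bv$ derivatives vanish identically, so conditions (1) and (2) of Definition \ref{KSPDefn} for $\tilde\Phi$ reduce exactly to the defining conditions of $\K^q_r(\RR,n)$. Hence the operators $I^3_{(i)}$ and $\I^3_{(i)}$, which are defined on $\KK^q_r$, may legitimately be applied to $\Phi$, which is precisely what the definition of $J_{(i)}$ presupposes. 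I will also use the elementary closure property $\KK^{q_1}_r(\RR,M_1)+\KK^{q_2}_r(\RR,M_2)\subseteq \KK^{\max(q_1,q_2)}_r(\RR,\min(M_1,M_2))$, immediate from the triangle inequality for the $\DD^{m,p}$-norm and the fact that a sum of two polynomial bounds is dominated by a single polynomial of the larger degree.

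For the base case $|\alpha|=1$, I would apply Proposition \ref{prop:defwieghts} to $\Phi$ viewed in $\KK^q_r(\RR,n)$: this gives $I^3_{(i)}(\Phi)\in\KK^{q+2}_r(\RR,(k\wedge n)-1)$ and $\I^3_{(i)}(\Phi)\in\KK^{q+4}_r(\RR,[n\wedge(k-2)]-1)$. Since $[n\wedge(k-2)]\leq (k\wedge n)$, the closure property yields $I^3_{(i)}(\Phi)+\I^3_{(i)}(\Phi)\in\KK^{q+4}_r(\RR,[n\wedge(k-2)]-1)$, and evaluating the measure argument at $\delta_x$ and invoking Lemma \ref{lem:conversion} gives $J_{(i)}(\Phi)\in\K^{q+4}_r(\RR,[n\wedge(k-2)]-1)$. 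This is exactly the asserted conclusion for $|\alpha|=1$, and well-definedness holds because $1\leq|\alpha|\leq[n\wedge(k-2)]$ forces $[n\wedge(k-2)]\geq 1$, hence $k\geq 3$, which is what Proposition \ref{prop:defwieghts} requires in order for $\I^3$ to be applied.

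The general statement then follows by induction on $|\alpha|$. The key bookkeeping point is that after one application the regularity index has dropped to $M_1:=[n\wedge(k-2)]-1\leq k-3$, so at every subsequent step the input index is already $\leq k-2$; consequently both $k\wedge M_j$ and $M_j\wedge(k-2)$ collapse to $M_j$, and one application of $J$ drops the regularity index by exactly one while raising the growth index by exactly four, the $\I^3$ contribution always being the dominant one. Iterating $|\alpha|$ times gives $J_\alpha(\Phi)\in\K^{q+4|\alpha|}_r(\RR,[n\wedge(k-2)]-|\alpha|)$, with well-definedness at each stage guaranteed by $j\leq[n\wedge(k-2)]$. The uniformly bounded case is identical, using instead the second half of Proposition \ref{prop:defwieghts}, which keeps the growth index at $0$ throughout; since that index never increases, the induction delivers $J_\alpha(\Phi)\in\K^0_r(\RR,[n\wedge(k-2)]-|\alpha|)$. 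I expect no genuine obstacle: the argument is mechanical, and the only points requiring care are the $\K\hookrightarrow\KK$ embedding that legitimises applying the operators to a measure-independent process, and the inequality $[n\wedge(k-2)]\leq(k\wedge n)$, which ensures that the $\I^3$-term rather than the $I^3$-term controls both the regularity loss and the growth gain.
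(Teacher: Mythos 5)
Your argument is correct and follows essentially the same route as the paper, whose proof of this lemma is simply a citation of Proposition \ref{prop:defwieghts} and Lemma \ref{lem:conversion}; you have merely made explicit the $\K\hookrightarrow\KK$ embedding, the domination of the $I^3$-term by the $\I^3$-term via $[n\wedge(k-2)]\leq (k\wedge n)$, and the induction bookkeeping that the paper leaves implicit. No gaps.
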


\begin{proof}
        This is a direct result of Proposition \ref{prop:defwieghts} and Lemma \ref{lem:conversion}.
\end{proof}

\begin{theorem}
        \label{th:MKVIBP}
        Let $f \in \C^{\infty}_b(\RR^N;\RR)$.
        For all multi-indices $\alpha$ on $\{1, \ldots, N\}$ with $|\alpha| \leq k-2$
        \[
        \partial^{\alpha}_x \, \EE \left[f(X_t^{x,\delta_x}) \right] = t^{-|\alpha|/2} \, \EE \left[ f(X_t^{x,\delta_x}) \, J_{\alpha}(1)(t,x) \right].
        \]
        In particular, we get the following bound
        \[
        \left|  \partial^{\alpha}_x \, \EE \left[f(X_t^{x,\delta_x}) \right] \right| \leq C \, \|f \|_{\infty} \,  t^{-|\alpha|/2}  \, (1+|x|)^{4 |\alpha|} .
        \]
\end{theorem}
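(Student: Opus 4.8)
The plan is to reduce the diagonal $x$-derivative to the two one-sided integration by parts formulae of Propositions \ref{th:IBPx1} and \ref{th:IBPmu1}. The operator $J_{(i)}$ is built precisely so that it records both contributions that arise because $x$ enters $\EE[f(X_t^{x,\delta_x})]$ twice: once as the starting point of the flow and once through the initial law $\delta_x$. Throughout I work under the standing hypotheses $\Vs \in \C^{k,k}_{b,Lip}(\RR^N\times\p_2(\RR^N);\RR^N)$ and (UE), which are what make Propositions \ref{th:IBPx1}--\ref{th:IBPmu1} and Lemma \ref{lem:Jprops} available.

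First I would establish the one-step recursion
\[
\partial_{x_i}\EE\!\left[f(X_t^{x,\delta_x})\,\Phi(t,x)\right] = t^{-1/2}\,\EE\!\left[f(X_t^{x,\delta_x})\,J_{(i)}(\Phi)(t,x)\right]
\]
for an arbitrary measure-independent weight $\Phi\in\K^q_r(\RR,M)$ with $M\geq 1$. To this end I introduce the auxiliary function $G(x,y):=\EE[f(X_t^{x,\delta_y})\,\Phi(t,x)]$, so that the left-hand side is $\partial_{x_i}[G(x,x)]$. Since the continuity requirements in Definition \ref{KSPDefn} make $G$ jointly $C^1$, the chain rule gives $\partial_{x_i}[G(x,x)]=\partial^{(1)}_{x_i}G(x,x)+\partial^{(2)}_{y_i}G(x,x)$, where $\partial^{(1)}$ and $\partial^{(2)}$ denote the partials in the first and second argument. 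The spatial term $\partial^{(1)}_{x_i}G$ is exactly the object controlled by Proposition \ref{th:IBPx1}(3) with the law frozen at $\delta_y$, producing the weight $I^3_{(i)}(\Phi)(t,x,\delta_y)$. For the measure term the key observation is that, for any Lions-differentiable functional $W$ on $\p_2(\RR^N)$, the map $y\mapsto W(\delta_y)$ satisfies $\partial_{y_i}[W(\delta_y)]=(\partial_\mu W(\delta_y,y))_i$; this is seen by lifting $\delta_y$ through the constant random variable $Y\equiv y$ and differentiating in the constant direction $e_i$. Applying this with $W([\theta])=\EE[f(X_t^{x,[\theta]})\Phi(t,x)]$ (note $\Phi(t,x)$ does not depend on $y$) and then Proposition \ref{th:IBPmu1}(2) gives the weight $\I^3_{(i)}(\Phi)(t,x,\delta_y,y)$. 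Setting $y=x$ and adding the two weights reproduces $J_{(i)}(\Phi)$, which proves the recursion. Notice that the $\sqrt{t}\,(\partial_\mu\Phi)_i$ piece of $\I^3_{(i)}$ is harmless here: since $\Phi$ carries no free measure variable it vanishes, which is exactly why no spurious cross-terms appear.

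The theorem then follows by induction on $|\alpha|$. The output $J_{(i)}(\Phi)$ has $\delta_x$ and the evaluation $v_1=x$ baked in, so it is again a measure-independent Kusuoka-Stroock process, and by Lemma \ref{lem:Jprops} it lies in $\K^{q+4}_r(\RR,[M\wedge(k-2)]-1)$; hence the recursion may be iterated. Starting from $\Phi\equiv 1\in\K^0_0(\RR,M)$ and applying the recursion $|\alpha|$ times (legitimate precisely when $|\alpha|\leq k-2$, by the domain constraint in Lemma \ref{lem:Jprops}) yields
\[
\partial^{\alpha}_x\,\EE\!\left[f(X_t^{x,\delta_x})\right]=t^{-|\alpha|/2}\,\EE\!\left[f(X_t^{x,\delta_x})\,J_\alpha(1)(t,x)\right].
\]
For the bound, Lemma \ref{lem:Jprops} gives $J_\alpha(1)\in\K^{4|\alpha|}_0(\RR,\cdot)$, so the defining estimate \eqref{ksineq} (with $m=0$ and $r=0$) yields $\EE|J_\alpha(1)(t,x)|\leq C(1+|x|)^{4|\alpha|}$; bounding $|f|\leq\|f\|_\infty$ then gives the stated estimate.

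The main obstacle is the one-step recursion, and specifically the clean separation of the two roles of $x$: one must justify the diagonal chain rule for $G(x,x)$ --- which rests on the joint continuity of the weight processes guaranteed by Definition \ref{KSPDefn} --- and establish the identity $\partial_{y_i}[W(\delta_y)]=(\partial_\mu W(\delta_y,y))_i$ linking the ordinary derivative of a Dirac-restricted functional to its Lions derivative evaluated on the diagonal. Once these are in place the remainder is a mechanical induction driven by Lemma \ref{lem:Jprops}.
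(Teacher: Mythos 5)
Your proposal is correct and follows essentially the same route as the paper: the paper's proof also splits $\partial^i_x\,\EE[f(X_t^{x,\delta_x})]$ into the spatial partial plus the Lions derivative evaluated at $([\theta],v)=(\delta_x,x)$, applies Proposition \ref{th:IBPx1} part 3 and Proposition \ref{th:IBPmu1} to each piece, and iterates. The only difference is presentational — you spell out the diagonal chain rule via the auxiliary function $G(x,y)$ and justify the identity $\partial_{y_i}[W(\delta_y)]=(\partial_\mu W(\delta_y,y))_i$ by lifting through a constant random variable, steps the paper leaves implicit in the phrase ``by the above discussion.''
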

\begin{proof}
        
        By the above discussion,
        \[
        \partial^{i}_x \, \EE \left[f(X_t^{x, \delta_x}) \right] = \partial_z^{i} \, \EE \left. \left[f(X_t^{z, \delta_x}) \right] \right|_{z=x} + \partial^{i}_{\mu}  \EE \left. \left[f(X_t^{x,[\theta]}) \right](v) \right|_{ [\theta]=\delta_x,v=x}
        \]
        Now, we apply the IBPFs developed earlier in Proposition \ref{th:IBPx1}  part 3 and Theorem \ref{th:IBPmu1} part 3.
        \begin{align*}
                & \partial_z^{i} \, \EE \left. \left[f(X_t^{z, \delta_x}) \right] \right|_{z=x} = t^{-1/2} \, \EE  \left[ f(X_t^{x,\delta_x})  I^3_{(i)}(1)(t,x) \right] \\
                &\partial^{i}_{\mu}  \EE \left. \left[f(X_t^{x,[\theta]}) \right](v) \right|_{ [\theta]=\delta_x,v=x} =  t^{-1/2} \, \EE  \left[ f(X_t^{x,\delta_x})  \I^3_{(i)}(1)(t,x, \delta_x, x) \right]
        \end{align*}
        and we can iterate this argument $|\alpha|$ times.
        
\end{proof}

\begin{corollary}
        \label{cor:MKVIBP}
        Let $ f \in \C^{\infty}_b(\RR^N;\RR)$ and $\alpha$ and $\beta$ multi-indices on $\{1, \ldots, N\}$ with $|\alpha| + |\beta| \leq k-2$. Then, 
        \[
        \partial^{\alpha}_x \, \EE \left[(\partial^{\beta} f)(X_t^{x,\delta_x}) \right] = t^{-\frac{|\alpha|+|\beta|}{2}} \, \EE \left[ f(X_t^{x,\delta_x}) \,  I^2_{\beta}(J_{\alpha}(1))(t,x) \right]
        \]
        and $I^2_{\beta}(J_{\alpha}(1))\in \K^{4|\alpha|+3|\beta|}_0(\RR,k-2-|\alpha|-|\beta|)$.
        
\end{corollary}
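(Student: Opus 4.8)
The plan is to peel off the two families of derivatives one at a time, taking the spatial derivatives $\partial^{\alpha}_x$ \emph{first} and only afterwards removing the derivatives $\partial^{\beta}$ sitting on $f$; it is precisely this ordering that produces the weight in the stated form $I^2_{\beta}(J_{\alpha}(1))$ rather than $J_{\alpha}(I^2_{\beta}(1))$. Since $f \in \C^{\infty}_b(\RR^N;\RR)$ we also have $\partial^{\beta} f \in \C^{\infty}_b(\RR^N;\RR)$, so Theorem \ref{th:MKVIBP} applies verbatim with $\partial^{\beta} f$ in place of $f$. As $|\alpha| \leq |\alpha|+|\beta| \leq k-2$, this yields
\[
\partial^{\alpha}_x \, \EE\left[(\partial^{\beta} f)(X_t^{x,\delta_x})\right] = t^{-|\alpha|/2}\,\EE\left[(\partial^{\beta} f)(X_t^{x,\delta_x})\,J_{\alpha}(1)(t,x)\right],
\]
and, starting from the constant process $1 \in \K^0_0(\RR,k)$, Lemma \ref{lem:Jprops} gives $J_{\alpha}(1) \in \K^{4|\alpha|}_0(\RR,(k-2)-|\alpha|)$.

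Next I would strip the $\partial^{\beta}$ off $f$ using Proposition \ref{th:IBPx1}(2). The key observation is that Proposition \ref{th:IBPx1}(2) is a \emph{pointwise} identity in $(x,[\theta])$ for the de-coupled process $\Xd_t$, involving no differentiation in $x$, so it may be specialised to the $x$-dependent measure $[\theta] = \delta_x$ after it has been written. Viewing the measure-independent process $J_{\alpha}(1) \in \K$ as a constant-in-$\mu$ element of $\KK^{4|\alpha|}_0(\RR,(k-2)-|\alpha|)$ and taking $\Psi(t,x,[\theta]) = J_{\alpha}(1)(t,x)$, Proposition \ref{th:IBPx1}(2) reads
\[
\EE\left[(\partial^{\beta} f)(\Xd_t)\,J_{\alpha}(1)(t,x)\right] = t^{-|\beta|/2}\,\EE\left[f(\Xd_t)\,I^2_{\beta}(J_{\alpha}(1))(t,x,[\theta])\right],
\]
valid for every $[\theta]$, the index requirement $|\beta| \leq \big[((k-2)-|\alpha|)\wedge(k-2)\big]$ being exactly $|\alpha|+|\beta| \leq k-2$. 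Setting $[\theta]=\delta_x$ and writing $I^2_{\beta}(J_{\alpha}(1))(t,x) := I^2_{\beta}(J_{\alpha}(1))(t,x,\delta_x)$, I combine the two displays into
\[
\partial^{\alpha}_x \, \EE\left[(\partial^{\beta} f)(X_t^{x,\delta_x})\right] = t^{-(|\alpha|+|\beta|)/2}\,\EE\left[f(X_t^{x,\delta_x})\,I^2_{\beta}(J_{\alpha}(1))(t,x)\right],
\]
which is the claimed formula.

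For the membership statement I would simply chase the indices through Proposition \ref{prop:defwieghts} and Lemma \ref{lem:conversion}. Applying the $I^2_{\beta}$ clause of Proposition \ref{prop:defwieghts} to $J_{\alpha}(1) \in \KK^{4|\alpha|}_0(\RR,(k-2)-|\alpha|)$ raises the growth exponent by $3|\beta|$ and lowers the differentiability order by $|\beta|$, giving $I^2_{\beta}(J_{\alpha}(1)) \in \KK^{4|\alpha|+3|\beta|}_0(\RR,(k-2)-|\alpha|-|\beta|)$. Lemma \ref{lem:conversion} then records that the restriction of this process at $\delta_x$ lies in $\K^{4|\alpha|+3|\beta|}_0(\RR,k-2-|\alpha|-|\beta|)$, as asserted.

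The only genuinely delicate point is conceptual rather than computational: one must respect the order of the two peeling steps and, when invoking Proposition \ref{th:IBPx1}(2), treat the $\K$-weight $J_{\alpha}(1)$ as a measure-independent $\KK$-process, specialising to $\delta_x$ only \emph{after} the ($x$-free) identity has been written and then returning to the class $\K$ via Lemma \ref{lem:conversion}. Everything past that is routine bookkeeping of the Kusuoka-Stroock indices.
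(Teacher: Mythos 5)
Your proposal is correct and follows essentially the same route as the paper: apply Theorem \ref{th:MKVIBP} to $\partial^{\beta}f \in \C^{\infty}_b$ to produce the weight $J_{\alpha}(1)$, then strip $\partial^{\beta}$ off $f$ via Proposition \ref{th:IBPx1} part 2 specialised at $[\theta]=\delta_x$, and track the Kusuoka--Stroock indices through Lemma \ref{lem:Jprops}, Proposition \ref{prop:defwieghts} and Lemma \ref{lem:conversion}. Your extra remarks on the ordering of the two peeling steps and on the legitimacy of evaluating at $\delta_x$ only after the $x$-free identity is written merely make explicit what the paper leaves implicit.
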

\begin{proof}
        Theorem \ref{th:MKVIBP} gives
        \[
        \partial^{\alpha}_x \, \EE \left[(\partial^{\beta}f)(X_t^{x,\delta_x}) \right] = t^{-|\alpha|/2} \, \EE \left[ (\partial^{\beta}f)(X_t^{x,\delta_x}) \, J_{\alpha}(1)(t,x) \right]
        \]
        with $J_{\alpha}(1) \in \K^{4|\alpha|}(\RR, k-2-|\alpha|)$.
        Then, using Proposition \ref{th:IBPx1} part 2, we get 
        \[
        \partial^{\alpha}_x \, \EE \left[(\partial^{\beta} f)(X_t^{x,\delta_x}) \right] = t^{-\frac{|\alpha|+|\beta|}{2}} \, \EE \left[ f(X_t^{x,\delta_x}) \, I^2_{\beta}(J_{\alpha}(1))(t,x)  \right] .
        \]
        
\end{proof}

\section{Connection with PDE}
\label{sec:masterPDE}

We return our attention to the PDE \eqref{eq:MasterPDE}. The results of the last section suggest that for initial conditions $g(z,\mu)=g(z)$, which do not depend on the measure, we can still expect there to be a classical solution, even if $g$ is not differentiable. Indeed, we spell out the conditions under which this is true in Theorem \ref{th:masterPDE}. But first, let us consider whether the same can be true for initial conditions which do depend on the measure.

\begin{example}
        \label{ex:counter}
        Let $g(z,\mu) = g(\mu) := \textstyle \left| \int y \, \mu(dy) \right|$ and $V_0 \equiv 0$, $V_1\equiv1$ and $N=d=1$, then 
        \[
        X_t^{\theta} = \theta  + B_t,
        \]
        and
        \[  g([X_t^{\theta}]) =  \left|  \EE[\theta]\right|.
        \]
        We now show that $[ \theta ] \mapsto  g([X_t^{\theta}])$ is not differentiable. If we choose $\theta \in L^2(\Omega)$ with $\EE\theta=0$, then for any $t>0$, $h>0$ and any $\gamma \in L^2(\Omega)$,
        \begin{align*}
                \frac{1}{ h} \left| g( [X_t^{\theta+ h \gamma}]) - g( [X_t^{\theta}]) \right|         & = \frac{|h|}{h} \, \left| \EE \gamma \right|
        \end{align*}
        and this limit does not exist as $h \to 0$. Hence, the G\^{a}teaux derivative of the map $L^2(\Omega) \ni \theta \mapsto g( [X_t^{\theta}])$ does not exist.
\end{example}

The above example shows that for a function $g:\RR^N \times \p_2(\RR^N) \to \RR$ which is Lipschitz continuous, we cannot, in general, expect $[\theta] \mapsto \EE \left( \, g \left(\Xd_t,\left[ \Xd_t \right]\right) \right)$ to be differentiable (for a fixed $t>0$) even when the coefficients in the equation for $\Xd_t$ are smooth and uniformly elliptic. There are, however, interesting examples of initial conditions for which we can develop integration by parts formulas.
Before we introduce this class of initial conditions, we consider what form derivatives of 
$
U(t,x,[\theta]):= \EE \left( g \left( \Xd_t, [\Xt_t] \right) \right)
$
take when $g$ is smooth. The following result is Lemma 5.1 from \cite{buckdahnmean}.
\begin{lemma}
        \label{lem:dmu}
        We assume that the function $g :\RR^N \times \p_2(\RR^N) \to \RR^N$ admits continuous derivatives $\partial_x g$ and $\partial_{\mu}g$ satisfying for some $q>0$ and $0 \leq p <2$
        \begin{align*}
                & \left| \partial_x g(x,[\theta]) \right| \leq C \left(1+|x|+\|\theta\|_2 \right)^q\\
                &  \left| \partial_\mu g(x,[\theta],v) \right| \leq C \left(1+|x|^q+\|\theta\|^q_2 + |v|^p \right)
        \end{align*} 
        and we assume $\Vs \in \C^{1,1}_{b,Lip}(\RR^N \times \p_2(\RR^N);\RR^N)$. 
        Then, $\partial_{\mu}U$ exists and takes the following form:
        \begin{align}
                \label{eq:dmuU}
                \begin{split}
                        & \partial_{\mu}U(t,x,[\theta],v) = \EE \left[ \partial g \left( \Xd_t, [\Xt_t] \right) \partial_{\mu} \Xd_t (v)   \right] \\
                        & \quad + \:  \EE \widetilde{\EE} \left[ \partial_{\mu} g \left( \Xd_t, [\Xt_t], \widetilde{X}^{v,[\theta]}_t \right) \partial_{v} \widetilde{X}_t^{v,[\theta]} +  \partial_{\mu} g \left( \Xd_t, [\Xt_t], \widetilde{X}^{\t \theta}_t \right) \partial_{\mu} \widetilde{X}_t^{\t \theta,[\theta]}(v)   \right].
                \end{split}
        \end{align}
\end{lemma}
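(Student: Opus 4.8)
The plan is to obtain $\partial_\mu U$ by lifting the measure argument of $U$ and computing a Fr\'echet derivative, using the chain rule for functions on $\RR^N \times \p_2(\RR^N)$. Fix $(t,x)$ and define the lift $\widetilde U(\theta) := \EE\!\left[ g\left(\Xd_t, [\Xt_t]\right)\right]$ for $\theta \in L^2(\Omega)$, noting that \emph{both} $\Xd_t = X^{x,[\theta]}_t$ and $\Xt_t = X^{\theta}_t$ depend on $\theta$. By Proposition \ref{prop:firstderivs}(b) the map $\theta \mapsto (\Xt_t, \Xd_t)$ is Fr\'echet differentiable in $L^2(\Omega)$, and the growth hypotheses on $\partial g$ and $\partial_\mu g$ (in particular the restriction $p<2$, which is compatible with the $L^p$-moment bounds on $\widetilde X^{v,[\theta]}_t$ from Proposition \ref{prop:MKVSDE}) are exactly what is needed to differentiate under the expectation sign by dominated convergence. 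The goal is then to show $\widetilde U$ is Fr\'echet differentiable and to read off $\partial_\mu U$ from the kernel of $D\widetilde U$, since by definition of the Lions derivative $D\widetilde U(\theta)(\gamma) = \widetilde\EE[\partial_\mu U(t,x,[\theta],\t\theta)\,\t\gamma]$.

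The first step is the chain rule. Perturbing $\theta$ by $\gamma$, the composite $g(\Xd_t,[\Xt_t])$ varies through two channels, its Euclidean argument $\Xd_t$ and its measure argument $[\Xt_t]$, giving
\begin{align*}
D\widetilde U(\theta)(\gamma) = \EE\!\left[\partial g(\Xd_t, [\Xt_t])\, D\Xd_t(\gamma)\right] + \EE\widetilde\EE\!\left[\partial_\mu g\big(\Xd_t, [\Xt_t], \widetilde X^{\t\theta}_t\big)\, D\widetilde X^{\t\theta}_t(\t\gamma)\right],
\end{align*}
where the second term is the realisation of the Lions derivative of $\mu \mapsto g(z,\mu)$ at $\mu = [\Xt_t]$, in the direction in which the law $[\Xt_t]$ moves, evaluated on an independent copy $(\widetilde\Omega, \widetilde\F, \widetilde\PP)$. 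The second step is to convert the Fr\'echet derivatives back into Lions derivatives of the flows, using the representations established in the proof of Proposition \ref{prop:firstderivs}(b). For the first term, \eqref{eq:Frechrep} gives $D\Xd_t(\gamma) = \widetilde\EE[\partial_\mu \Xd_t(\t\theta)\,\t\gamma]$, which after a Fubini interchange produces the contribution $\EE[\partial g(\Xd_t,[\Xt_t])\,\partial_\mu \Xd_t(v)]$ with $v$ playing the role of $\t\theta$. For the second term, I substitute the decomposition $D\widetilde X^{\t\theta}_t(\t\gamma) = \partial_x \widetilde X^{\t\theta,[\theta]}_t\,\t\gamma + \widehat\EE[\partial_\mu \widetilde X^{\t\theta,[\theta]}_t(\widehat\theta)\,\widehat\gamma]$ derived there; the $\partial_x$ piece yields $\partial_v \widetilde X^{v,[\theta]}_t$ and the $\partial_\mu$ piece yields $\partial_\mu \widetilde X^{\t\theta,[\theta]}_t(v)$, which are precisely the two measure terms of \eqref{eq:dmuU}.

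Collecting the three contributions, $D\widetilde U(\theta)(\gamma)$ takes the form $\widetilde\EE[h(\t\theta)\,\t\gamma]$ for a square-integrable deterministic function $h$, and by the definition of the Lions derivative this identifies $\partial_\mu U(t,x,[\theta],\cdot)=h$, giving exactly \eqref{eq:dmuU}. The main obstacle is not any single identity but the careful bookkeeping across the three probability spaces $(\Omega,\widetilde\Omega,\widehat\Omega)$, together with the stochastic Fubini interchanges and the dominated-convergence arguments justifying differentiation under $\EE$; each of these relies on the uniform $L^p$-bounds for the first-order derivative processes from Proposition \ref{prop:firstderivs} and on the polynomial growth hypotheses imposed on $\partial g$ and $\partial_\mu g$.
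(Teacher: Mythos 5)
The paper does not prove this lemma itself: it is quoted verbatim as Lemma 5.1 of \cite{buckdahnmean}, so there is no in-paper proof to compare against. Your argument is a correct reconstruction of the proof given there, and it uses exactly the machinery this paper establishes: Fr\'echet differentiability of $\theta \mapsto (\Xt_t,\Xd_t)$ from Proposition \ref{prop:firstderivs}(b), the representation \eqref{eq:Frechrep}, and the decomposition $D X^{\theta}_t(\gamma) = \partial_x X^{x,[\theta]}_t\big|_{x=\theta}\gamma + \widehat{\EE}\bigl[\partial_{\mu}X^{x,[\theta]}_t(\widehat{\theta})\big|_{x=\theta}\,\widehat{\gamma}\bigr]$ derived in that proposition's proof, with the growth hypotheses (in particular $p<2$) justifying differentiation under the expectation and the square-integrability of the kernel. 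I see no gap.
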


Now we introduce a class of initial conditions $g: \RR^N \times \p_2(\RR^N) \to \RR$ for which we will be able to develop integration by parts formulas.

\begin{definition}[\textbf{(IC)$_x$} and \textbf{(IC)$_v$} ]
        \label{def:IC}
        
        We say that $g: \RR^N \times \p_2(\RR^N) \to \RR$ is in the class \textbf{(IC)} if the following conditions hold:
        \begin{enumerate}
                \item $g$ is continuous with polynomial growth: i.e. there exists $q>0$ such that for all $(x,[\theta]) \in \RR^N \times \p_2(\RR^N)$:  $|g(x,[\theta])| \leq C (1+ |x|+ \|\theta\|_2)^q$.
                \item There exists a sequence of functions $(g_l)_{l \geq 1}$, $g_l: \RR^N \times \p_2(\RR^N) \to \RR$ with polynomial growth such that $g_l \to g$ uniformly on compacts and $\partial_x g_l$ exists and also has polynomial growth for each $l \geq 1$.
                \item For each $l \geq 1$ there exists a function
                $G_l: \RR^N \times \p_2(\RR^N) \times \RR^N \to \RR$ which is either differentiable in $x$ or $v$ and  $\partial_{\mu} g_l(x,\mu,v) = \partial_x G_l(x,\mu,v)$ or $\partial_{\mu} g_l(x,\mu,v) = \partial_v G_l(x,\mu,v)$. Moreover, each $G_l$ and its derivatives satisfies the growth condition: there exist $q>0$ and $0\leq r <1$ such that for all $(x,[\theta],v) \in\RR^N \times \p_2(\RR^N) \times \RR^N$:
                \[
                |h(x,[\theta],v)| \leq C \left(1 + |x|^q + \| \theta\|_2^q + |v|^{r} \right).
                \]
                where $h$ is $G_l$, $\partial_x G_l$ or $\partial_v G_l$.
                In addition, we assume that for all $(x,\mu,v)$ the pointwise limit $\lim_{l \to \infty} G_l(x,\mu,v)$ exists and the function $G$ defined by $G(x,\mu,v):= \lim_{l \to \infty} G_l(x,\mu,v)$ is continuous and satisfies the same growth condition.
        \end{enumerate}
        If  $\partial_{\mu} g_l= \partial_x G_l$ we say $g$ is in the class \textbf{(IC)$_x$}. If $\partial_{\mu} g_l = \partial_v G_l$, we say $g$ is in the class \textbf{(IC)$_v$}.
\end{definition}
We give some examples of functions $g$ in the class (IC).

\begin{example}
        \label{ex:ICs}$\left.\right.$
        \begin{enumerate}
                
                \item Functions with no dependence on the measure: \newline
                Suppose that $g(x,\mu) = \varphi(x)$ where $\varphi \in \C_p(\RR^N;\RR)$. Then, let $(\varphi_l)_{l \geq 1}$ be a sequence of mollifications of $\varphi$ and $(g_l)_{l \geq 1}$ the corresponding functions defined in the same way. Then,  $\partial_{\mu}g_l(x,\mu,v)= 0$. So, $g$ belongs to the class  \textbf{(IC)$_x$} and $G$ in this case would be $G \equiv 0$.
                
                \item Centred random variables: \newline
                Suppose that $g(x,\mu) = \varphi\left(x- \textstyle\int y \mu(dy)\right)$ where $\varphi \in \C_p(\RR^N;\RR)$. Then, let $(\varphi_l)_{l \geq 1}$ be a sequence of mollifications of $\varphi$ and $(g_l)_{l \geq 1}$ the corresponding functions defined in the same way. Then,  $\partial_{\mu}g_l(x,\mu,v)= - \partial \varphi_l(x- \textstyle\int y \mu(dy) )$. So, $g$ belongs to the class  \textbf{(IC)$_x$} and $G$ in this case would be $G(x,\mu,v) = - \varphi(x-\textstyle\int y \mu(dy))$.
                
                
                \item First order interaction: \newline
                Suppose $g(x,\mu) := \textstyle \int \varphi(x,y) \mu(dy)$ where $\varphi: \RR^N \times \RR^N \to \RR$ is continuous with $|\varphi(x,y)| \leq C(1+ |x|^q + |y|^r)$ for some $q>0$ and $0 \leq r <1$. Then, let $(\varphi_l)_{l \geq 1}$ be a sequence of mollifications of $\varphi$ and $(g_l)_{l \geq 1}$ the corresponding functions defined in the same way. Then, $\partial_{\mu}g_l(x,\mu,v)= \partial_v \varphi_l(x,v)$. So, $g$ belongs to the class  \textbf{(IC)$_v$} and $G$ in this case would be $G(x,\mu,v) = \varphi(x,v)$. Note, this example includes the case of convolutions where $\varphi(x,y)= \varphi(x-y)$.
                
                \item Second order interaction: \newline
                Suppose $g(x,\mu) := \textstyle \int \varphi(x,y,z) \mu(dy) \mu(dz)$  where $\varphi: \RR^{3N} \to \RR$ is continuous with $|\varphi(x,y,z)| \leq C(1+ |x|^q + |y|^r + |z|^r)$ for some $q>0$ and $0 \leq r <1$. Then, let $(\varphi_l)_{l \geq 1}$ be a sequence of mollifications of $\varphi$ and $(g_l)_{l \geq 1}$ the corresponding functions defined in the same way. Then, $\partial_{\mu}g_l(x,\mu,v)= \textstyle \int \left[ \partial_v \varphi_l(x,v,y) + \partial_v \varphi_l (x,y,v) \right] \mu(dy)$. So, $g$ belongs to the class  \textbf{(IC)$_v$} and $G$ in this case would be
                \[
                G(x,\mu,v) = \textstyle \int \left[ \varphi(x,v,y) + \varphi (x,y,v) \right] \mu(dy).
                \] 
                
                \item Polynomials on the Wasserstein space: \newline
                Suppose
                $g(x,\mu) = \textstyle \prod_{i=1}^n \int \varphi_i(x,y) \mu (dy) $, where $n \geq 1$ and each $\varphi_i: \RR^N \times \RR^N \to \RR$ is continuous with $|\varphi_i(x,y)| \leq C(1+ |x|^q )$ for some $q>0$. Then, let $(\varphi_{i,l})_{l \geq 1}$ be a sequence of mollifications of $\varphi_i$ and $(g_l)_{l \geq 1}$ the corresponding functions defined in the same way. Then, 
                \[
                \partial_{\mu}g_l(x,\mu,v)= \sum_{j=1}^n \prod_{i=1,i \neq j}^n \left( \int \varphi_{i,l}(x,y) \mu (dy) \right) \partial_v \varphi_{j,l}(x,v).
                \]
                Therefore $g$ belongs to the class  \textbf{(IC)$_v$} and $G$ in this case would be
                \[
                G(x,\mu,v)= \sum_{j=1}^n \prod_{i=1,i \neq j}^n \left( \int \varphi_i(x,y) \mu (dy) \right)  \varphi_j(x,v).
                \]
        \end{enumerate}
        
\end{example}

Now, we introduce the hypotheses under which we will be able to prove existence and uniqueness of a solution to the PDE \eqref{eq:MasterPDE}.
\begin{itemize}
        \item[\textbf{(H1):}] (UE) holds, and the coefficients $\Vs \in \C^{3,3}_{b,Lip}(\RR^N \times \p_2(\RR^N); \RR^N)$, and $g:\RR^N \times \p_2(\RR^N) \to \RR^N$ is in the class \textbf{(IC)$_x$}.
        \item[\textbf{(H2):}] (UE) holds, and the coefficients $\Vs \in \C^{3,3}_{b,Lip}(\RR^N \times \p_2(\RR^N); \RR^N)$ as well as being uniformly bounded, and that
        $g:\RR^N \times \p_2(\RR^N) \to \RR^N$ is in the class \textbf{(IC)$_v$}.
\end{itemize}

\begin{lemma}
        \label{lem:ctyU}
        Under either \textbf{(H1)} or \textbf{(H2)}, for the function $U(t,x,[\theta]):= \EE \left[g\left(\Xd_t \left[\Xt_t\right]\right)\right]$, the derivative functions
        \begin{align*}
                (0,T]\times\RR^N\times\p_2(\RR^N) \ni(t,x,[\theta]) & \mapsto \left( \partial_x U(t,x,[\theta]), \, \partial^2_{x,x}U(t,x,[\theta]) \right) \\ 
                (0,T]\times\RR^N\times\p_2(\RR^N)\times \RR^N \ni(t,x,[\theta],v) & \mapsto \left(  \partial_{\mu}U(t,x,[\theta],v), \, \partial_v \partial_{\mu}U(t,x,[\theta],v) \right)
        \end{align*}
        exist and are continuous. Moreover, for all compacts $K \subset \p_2(\RR^N)$
        \[
        \sup_{[\theta] \in K} \EE \left| \partial_{\mu}U(t,x,[\theta],\theta)\right|^2 + \left| \partial_v \partial_{\mu}U(t,x,[\theta],\theta) \right|^2 < \infty.
        \]
\end{lemma}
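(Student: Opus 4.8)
\emph{Overall strategy.} The plan is to approximate $g$ by the smooth functions $g_l$ supplied by the class \textbf{(IC)} (Definition~\ref{def:IC}), set $U_l(t,x,[\theta]) := \EE[g_l(\Xd_t, [\Xt_t])]$, and compute the four derivatives of $U_l$ explicitly. Since each $g_l$ is smooth with polynomial growth, $\partial_x U_l$, $\partial^2_{x,x}U_l$, $\partial_\mu U_l$ and $\partial_v\partial_\mu U_l$ all exist by differentiation under the expectation (for the measure derivatives this is Lemma~\ref{lem:dmu} together with one further differentiation). The point is then to integrate by parts, using the operators of Section~\ref{sec:IBPdecoupled}, so that every derivative falling on $g_l$ or on the auxiliary functions $G_l$ is transferred onto a Kusuoka--Stroock weight, leaving $g_l$ (resp.\ $G_l$) composed with the flows but undifferentiated. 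Letting $l\to\infty$ and using that $g_l\to g$, $G_l\to G$ uniformly on compacts while the weights have finite moments and $\Xd_t,\Xt_t$ have moments of all orders (Proposition~\ref{prop:MKVSDE}), dominated convergence produces explicit integration-by-parts representations for the derivatives of $U$ itself, from which existence, continuity and the final bound all follow.

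\emph{The Euclidean derivatives.} Since $[\Xt_t]$ does not depend on $x$, I freeze $\nu=[\Xt_t]$ and apply Proposition~\ref{th:IBPx1}(3) to the smooth function $z\mapsto g_l(z,\nu)$ with $\Psi\equiv 1$, obtaining
\[
\partial^\alpha_x U_l(t,x,[\theta]) = t^{-|\alpha|/2}\,\EE\left[g_l(\Xd_t,[\Xt_t])\,I^3_\alpha(1)(t,x,[\theta])\right],\qquad |\alpha|\le 2 .
\]
This is licit because $I^3_\alpha(1)\in\KK^{2|\alpha|}_0(\RR,k-|\alpha|)$ by Proposition~\ref{prop:defwieghts} and $|\alpha|\le 2\le k=3$. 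Passing to the limit $l\to\infty$ by dominated convergence (the growth of $g_l$ being dominated via~\eqref{eq:decoupledmoment} and the $\DD^{m,p}$-bounds~\eqref{ksineq} on the weight) gives the same formula with $g_l$ replaced by $g$. Continuity in $(t,x,[\theta])$ on $(0,T]\times\RR^N\times\p_2(\RR^N)$ then follows from the $L^p$-continuity of $(x,[\theta])\mapsto\Xd_t$ in~\eqref{eq:MKVLpreg}, the continuity of the weights, and a further dominated-convergence argument.

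\emph{The measure derivatives.} Applying Lemma~\ref{lem:dmu} to $g_l$ splits $\partial_\mu U_l$ into a term carrying the Euclidean derivative $\partial g_l$ contracted with $\partial_\mu\Xd_t(v)$, and two terms carrying the Lions derivative $\partial_\mu g_l$ evaluated along the tilde flows $\widetilde{X}^{v,[\theta]}_t$ and $\widetilde{X}^{\t\theta}_t$. The first term is the contribution of the $\mu$-dependence of $\Xd_t$ alone; freezing $\nu=[\Xt_t]$ it equals $\EE[\partial_\mu(g_l(\Xd_t,\nu))(v)]$, so Proposition~\ref{th:IBPmu1}(1) with test function $z\mapsto g_l(z,\nu)$ and $\Psi\equiv 1$ removes the derivative and produces the weight $\I^1_{(i)}(1)$. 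For the remaining two terms I invoke the \textbf{(IC)} structure. Under \textbf{(IC)$_x$} (hypothesis \textbf{(H1)}) one has $\partial_\mu g_l=\partial_x G_l$, a derivative in the \emph{first} argument, i.e.\ in the slot occupied by $\Xd_t$; the tilde factors $\partial_v\widetilde{X}^{v,[\theta]}_t$ and $\partial_\mu\widetilde{X}^{\t\theta,[\theta]}_t(v)$ are independent of $B$ and ride along as parameters, so Proposition~\ref{th:IBPx1}(2) (the operator $I^2$) transfers the derivative onto a weight and leaves $G_l$ undifferentiated. Under \textbf{(IC)$_v$} (hypothesis \textbf{(H2)}) one has $\partial_\mu g_l=\partial_v G_l$, a derivative in the \emph{third} argument; for the $\widetilde{X}^{v,[\theta]}_t$ term the chain rule gives $\partial_v G_l(\cdot,\widetilde{X}^{v,[\theta]}_t)\,\partial_v\widetilde{X}^{v,[\theta]}_t=\partial_v(G_l(\cdot,\widetilde{X}^{v,[\theta]}_t))$, and after interchanging $\partial_v$ with $\widetilde{\EE}$ I apply Proposition~\ref{th:IBPx1}(3) on the space $(\t\Omega,\t\F,\t\PP)$, while the $\widetilde{X}^{\t\theta}_t$ term is handled by the measure-direction formula, Proposition~\ref{th:IBPmu1}, on the tilde space. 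In every case letting $l\to\infty$ yields a representation of $\partial_\mu U(t,x,[\theta],v)$ as a finite sum of expectations of $g$ or $G$, composed with the flows, against Kusuoka--Stroock weights. Differentiating this once more in the free variable $v$ gives $\partial_v\partial_\mu U$: the extra $\partial_v$ either lands on a weight (still a Kusuoka--Stroock process) or on $G_l(\cdot,\widetilde{X}^{v,[\theta]}_t)$, in which case a further integration by parts in $v$ on the tilde space (the operator $I^2$, for which $|\alpha|\le k-2=1$ is exactly the available budget) removes it. Continuity of both derivatives again follows by dominated convergence from~\eqref{eq:MKVLpreg}.

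\emph{The uniform bound and the main obstacle.} For the $L^2$-bound I use that a compact $K\subset\p_2(\RR^N)$ has uniformly bounded second moments, $\sup_{[\theta]\in K}\|\theta\|_2<\infty$. Evaluating the representations at $v=\theta$ and using the $\DD^{0,2}$-estimates~\eqref{ksineq} for the weights, the only source of unbounded growth in $v$ is $G$, which by Definition~\ref{def:IC} grows like $|v|^{r}$ with $r<1$; hence $\EE|\theta|^{2r}<\infty$ and the bound follows by H\"older. I expect the main obstacle to be precisely this control of the $v$-growth of the weights in the measure-direction terms: under \textbf{(IC)$_v$} the function $G$ genuinely depends on $v$, and the tilde-space integration by parts produces weights whose growth in the starting point $v$ must be suppressed. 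This is exactly where the boundedness of $\Vs$ in \textbf{(H2)} enters---by Theorem~\ref{th:XisKSP} and Proposition~\ref{prop:defwieghts} it places these weights in the zero-growth class $\KK^0$, so that the product with $|v|^{r}$ remains integrable after setting $v=\theta$. Keeping the total number of transferred derivatives within the budget $k=3$ afforded by $\Vs\in\C^{3,3}_{b,Lip}$, especially for the second-order derivative $\partial_v\partial_\mu U$, is the other delicate bookkeeping point.
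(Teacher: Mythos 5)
Your proposal is correct and follows essentially the same route as the paper: approximate $g$ by the smooth $g_l$ from the class \textbf{(IC)}, represent the Euclidean derivatives via the $x$-direction integration by parts with weight $I^1_\alpha(1)$ (your $I^3_\alpha(1)$ coincides with it for $\Psi\equiv 1$), split $\partial_\mu U_l$ via Lemma \ref{lem:dmu} and treat the $G_l$-terms with $I^2$ on the original space under \textbf{(H1)} and with the tilde-space operators $\widetilde{I}^1,\widetilde{\I}^1$ under \textbf{(H2)}, then pass to the limit by dominated convergence. You also correctly identify the two genuinely delicate points the paper relies on — the $|v|^{r}$, $r<1$, growth of $G$ and the need for bounded coefficients under \textbf{(H2)} to place $\widetilde{\I}^1(1)$ in the zero-growth class before evaluating at $x=\t\theta$.
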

\begin{proof}
        Under both \textbf{(H1)} and \textbf{(H2)}, $g$ is in the class \textbf{(IC)}, so there is a sequence of functions $(g_l)_{l \geq 1}$ approximating $g$. Let $U_l(t,x,[\theta]= \EE \left[g_l(\Xd_t,[\Xt_t])\right]$ .
        From Proposition \ref{th:IBPx1} we know that for $i,j \in \{1, \ldots, N\}$ 
        \begin{align*}
                \partial^i_x U_l(t,x,[\theta])&= t^{-1/2} \, \EE \left[ g_l(\Xd_t,[\Xt_t]) I^1_{(i)}(1)(t,x,[\theta]) \right], \\
                \partial^{(i,j)}_x U_l(t,x,[\theta]) &= t^{-1} \,  \EE \left[ g_l(\Xd_t,[\Xt_t])I^1_{(i,j)}(1)(t,x,[\theta]) \right].
        \end{align*}
        By the growth assumption on $g_l$, H\"older's inequality and the moment estimates already obtained for the processes $\Xd_t,\Xt_t$ and the Kusuoka-Stroock processes in \eqref{eq:MKVmoment}, \eqref{eq:decoupledmoment} and Proposition \ref{prop:KSprocesses}, we can show that the expectations above are bounded independently of $l \geq 1$. By dominated convergence, we can take the limit in each equation.
        Now, each of the Kusuoka-Stroock processes appearing in the above representations for the derivatives are, by definition, jointly continuous in $(t,x,[\theta])$ in $L^p(\Omega)$, $p \geq 1$. So is $(t,x,[\theta]) \mapsto g(\Xd_t,[\Xt_t])$ by Theorem \ref{th:XisKSP} (which guarantees that $(t,x,[\theta]) \mapsto \Xd_t$ is a Kusuoka-Stroock process) and the continuity of $g$.
        
        To lighten notation, we restrict to the case $N=1$ through the rest of this proof. First, we assume \textbf{(H1)} holds, so $g$ is in the class \textbf{(IC)$_x$}. Note that $g_l$ satisfies the hypotheses of Lemma \ref{lem:dmu}, which gives
        \begin{align}
                \label{eq:Ux}
                \begin{split}
                        & \partial_{\mu}U_l(t,x,[\theta],v) = \EE \left[ \partial g_l \left( \Xd_t, [\Xt_t] \right) \partial_{\mu} \Xd_t (v)   \right] \\
                        & \quad + \:  \EE \widetilde{\EE} \left[ \partial_{x} G_l \left( \Xd_t, [\Xt_t], \widetilde{X}^{v,[\theta]}_t \right) \partial_{v} \widetilde{X}_t^{v,[\theta]} +  \partial_{x} G_l \left( \Xd_t, [\Xt_t], \widetilde{X}^{\t \theta}_t \right) \partial_{\mu} \widetilde{X}_t^{\t \theta,[\theta]}(v)   \right].
                \end{split}
        \end{align}
        Now, we recall the following identity connecting $\D_r X^{x,[\theta]}_t$ and $ \partial_x X^{x,[\theta]}_r$:
        \[
        Id_N = \D_r X^{x,[\theta]}_t \sigma^{\top} (\sigma \sigma^{\top})^{-1} (X^{x,[\theta]}_r, [X_r^{\theta}]) \partial_x X^{x,[\theta]}_r \left(   \partial_x X^{x,[\theta]}_t\right)^{-1}.
        \]
        So,
        \begin{align*}
                &       \partial_{x} G_l \left( \Xd_t, [\Xt_t], \widetilde{X}^{v,[\theta]}_t \right) \partial_{v} \widetilde{X}_t^{v,[\theta]} \\
                = \: & \partial_{x} G_l \left( \Xd_t, [\Xt_t], \widetilde{X}^{v,[\theta]}_t \right) \D_r X^{x,[\theta]}_t \sigma^{\top} (\sigma \sigma^{\top})^{-1} (X^{x,[\theta]}_r, [X_r^{\theta}]) \partial_x X^{x,[\theta]}_r \left(      \partial_x X^{x,[\theta]}_t\right)^{-1}  \partial_{v} \widetilde{X}_t^{v,[\theta]} \\
                = \: & \D_r \left[G_l \left( \Xd_t, [\Xt_t], \widetilde{X}^{v,[\theta]}_t \right) \right] \sigma^{\top} (\sigma \sigma^{\top})^{-1} (X^{x,[\theta]}_r, [X_r^{\theta}]) \partial_x X^{x,[\theta]}_r \left(      \partial_x X^{x,[\theta]}_t\right)^{-1}  \partial_{v} \widetilde{X}_t^{v,[\theta]}
        \end{align*}
        and, applying Proposition \ref{th:IBPx1} part 2, we get
        \begin{align*}
                &       \EE \widetilde{\EE} \left[ \D_r \left[G_l \left( \Xd_t, [\Xt_t], \widetilde{X}^{v,[\theta]}_t \right) \right] \sigma^{\top} (\sigma \sigma^{\top})^{-1} (X^{x,[\theta]}_r, [X_r^{\theta}]) \partial_x X^{x,[\theta]}_r \left( \partial_x X^{x,[\theta]}_t\right)^{-1}  \partial_{v} \widetilde{X}_t^{v,[\theta]} \right] \\
                = \: &  t^{-1/2} \,     \EE \widetilde{\EE} \left[ G_l \left( \Xd_t, [\Xt_t], \widetilde{X}^{v,[\theta]}_t \right) I^2(1)(t,x,[\theta]) \,  \partial_{v} \widetilde{X}_t^{v,[\theta]} \right].
        \end{align*}
        Similarly,
        \begin{align*}
                &        \partial_{x} G_l \left( \Xd_t, [\Xt_t], \widetilde{X}^{\t \theta}_t \right) \partial_{\mu} \widetilde{X}_t^{\t \theta,[\theta]}(v)\\
                = \: & \partial_{x} G_l \left( \Xd_t, [\Xt_t], \widetilde{X}^{\t \theta}_t \right)\D_r X^{x,[\theta]}_t \sigma^{\top} (\sigma \sigma^{\top})^{-1} (X^{x,[\theta]}_r, [X_r^{\theta}]) \partial_x X^{x,[\theta]}_r \left(   \partial_x X^{x,[\theta]}_t\right)^{-1}   \partial_{\mu} \widetilde{X}_t^{\t \theta,[\theta]}(v) \\
                = \: & \D_r \left[G_l \left( \Xd_t, [\Xt_t],\widetilde{X}^{\t \theta}_t \right) \right] \sigma^{\top} (\sigma \sigma^{\top})^{-1} (X^{x,[\theta]}_r, [X_r^{\theta}]) \partial_x X^{x,[\theta]}_r \left(      \partial_x X^{x,[\theta]}_t\right)^{-1}   \partial_{\mu} \widetilde{X}_t^{\t \theta,[\theta]}(v)
        \end{align*}
        and applying Proposition \ref{th:IBPx1} part 2 again, we get
        \begin{align*}
                &       \EE \widetilde{\EE} \left[ \D_r \left[G_l \left( \Xd_t, [\Xt_t],\widetilde{X}^{\t \theta}_t \right) \right] \sigma^{\top} (\sigma \sigma^{\top})^{-1} (X^{x,[\theta]}_r, [X_r^{\theta}]) \partial_x X^{x,[\theta]}_r \left( \partial_x X^{x,[\theta]}_t\right)^{-1}   \partial_{\mu} \widetilde{X}_t^{\t \theta,[\theta]}(v) \right] \\
                = \: & t^{-1/2} \, \EE \widetilde{\EE} \left[ G_l \left( \Xd_t, [\Xt_t],\widetilde{X}^{\t \theta}_t \right) \, I^2(1)(t,x,[\theta]) \, \partial_{\mu} \widetilde{X}_t^{\t \theta,[\theta]}(v) \right].
        \end{align*}
        So, in this case, \eqref{eq:Ux} can be rewritten as
        \begin{align}
                \label{eq:dmuIBPx}
                \begin{split}
                        \partial_{\mu}U_l(t,x,[\theta],v)  = t^{-1/2} \, \EE \bigg\{ & g_l \left( \Xd_t, [\Xt_t] \right) \I^1(t,x,[\theta],v)  + \widetilde{\EE} \big[ G_l \left( \Xd_t, [\Xt_t], \widetilde{X}^{v,[\theta]}_t \right)I^2(1)(t,x,[\theta]) \,  \partial_{v} \widetilde{X}_t^{v,[\theta]} \\
                        & \quad \quad +  G_l \left( \Xd_t, [\Xt_t],\widetilde{X}^{\t \theta}_t \right) \, I^2(1)(t,x,[\theta]) \, \partial_{\mu} \widetilde{X}_t^{\t \theta,[\theta]}(v) \big] \bigg\}.
                \end{split}
        \end{align}
        To show that $\textstyle \sup_{[\theta] \in K} \EE \left| \partial_{\mu}U(t,x,[\theta],\theta)\right|^2 < \infty$, we note that all processes on the right hands side of \eqref{eq:dmuIBPx} have moments of all orders bounded polynomially in $\|\theta\|_2$ except $\widetilde{X}^{\t \theta}_t$ in the final term. For the final term, by the growth conditions on $G_l$,
        \begin{align*}
                & \left| \EE \widetilde{\EE} \left[G_l \left( \Xd_t, [\Xt_t],\widetilde{X}^{\t \theta}_t \right) \, I^2(1)(t,x,[\theta]) \, \partial_{\mu} \widetilde{X}_t^{\t \theta,[\theta]}(v) \right] \right|^2 \\
                \leq & \left\| G_l \left( \Xd_t, [\Xt_t],\widetilde{X}^{\t \theta}_t \right) \right\|^2_{L^{2/r}(\Omega \times \t \Omega)}  \left\|  I^2(1)(t,x,[\theta]) \right\|^2_{L^{4/(1-r)}(\Omega \times \t \Omega)} \, \left\| \partial_{\mu} \widetilde{X}_t^{\t \theta,[\theta]}(v) \right\|^2_{L^{4/(1-r)}(\Omega \times \t \Omega)} \\
                \leq & C \left( \EE \widetilde{\EE} \left[  \left( 1 + | \Xd_t|^q+ \|\Xt_t\|_2^q + |\widetilde{X}^{\t \theta}_t|^r \right)^{2/r} \right] \right)^r \, (1+|x|+\|\theta\|_2)^6 \\
                \leq & C  \EE \widetilde{\EE} \left[ \left( 1 + | \Xd_t|^{2q/r}+ \|\Xt_t\|_2^{2q/r} + |\widetilde{X}^{\t \theta}_t|^2 \right) \right] \, (1+|x|+\|\theta\|_2)^6 \\
                \leq & C  \left( 1 + |x|^{2q/r}+ \|\theta\|_2^{2q/r} + \|\theta\|_2^2 \right)  \, (1+|x|+\|\theta\|_2)^6.
        \end{align*}
        Clearly this is bounded in $[\theta]$ over compacts in $\p_2(\RR^N)$.
        
        Now, we consider the derivative $\partial_v \partial_{\mu} U_l$. We note that in the definition of $ \I^1(t,x,[\theta],v)$, the only term depending on $v$ is $\partial_{\mu} \Xd_t(v)$. Since $\Vs \in \C^{3,3}_{b,Lip}(\RR^N \times \p_2(\RR^N);\RR^N)$ by assumption, $ \partial_v \I^1(t,x,[\theta],v)$ exists and we obtain:
        \begin{align}
                \label{eq:dmudvU}
                \begin{split}
                        \partial_v\partial_{\mu}U_l(t,x,[\theta],v)  = t^{-1/2} \, \EE \bigg\{ & g_l \left( \Xd_t, [\Xt_t] \right)\, \partial_v \I^1(t,x,[\theta],v) \\
                        & + \widetilde{\EE} \big[ \partial_v G_l \left( \Xd_t, [\Xt_t], \widetilde{X}^{v,[\theta]}_t \right)I^2(1)(t,x,[\theta]) \,  \left(\partial_{v} \widetilde{X}_t^{v,[\theta]}\right)^2 \\
                        & \quad  +  G_l \left( \Xd_t, [\Xt_t], \widetilde{X}^{v,[\theta]}_t \right)I^2(1)(t,x,[\theta]) \,  \partial^2_{v} \widetilde{X}_t^{v,[\theta]} \\
                        & \quad \: +  G_l \left( \Xd_t, [\Xt_t],\widetilde{X}^{\t \theta}_t \right) \, I^2(1)(t,x,[\theta]) \, \partial_v \partial_{\mu} \widetilde{X}_t^{\t \theta,[\theta]}(v) \big] \bigg\}.
                \end{split}
        \end{align}
        We again use that
        \[
        Id_N = \D_r X^{x,[\theta]}_t \sigma^{\top} (\sigma \sigma^{\top})^{-1} (X^{x,[\theta]}_r, [X_r^{\theta}]) \partial_x X^{x,[\theta]}_r \left(   \partial_x X^{x,[\theta]}_t\right)^{-1}.
        \]
        Of course, this identity also holds for `tilde' processes defined on $\left(\t \Omega, \t \F, \t \PP \right)$ and we denote by $\widetilde{\D}$ the Malliavin derivative on this space.
        So,  using the above identity and the Malliavin chain rule, we obtain
        \begin{align*}
                &       \partial_{v} G_l \left( \Xd_t, [\Xt_t], \widetilde{X}^{v,[\theta]}_t \right) I^2(1)(t,x,[\theta]) \,  \left(\partial_{v} \widetilde{X}_t^{v,[\theta]}\right)^2 \\
                = \: & \partial_{v} G_l \left( \Xd_t, [\Xt_t], \widetilde{X}^{v,[\theta]}_t \right) \widetilde{\D}_r \widetilde{X}^{v,[\theta]}_t \sigma^{\top} (\sigma \sigma^{\top})^{-1} (\widetilde{X}^{v,[\theta]}_r, [\widetilde{X}_r^{\theta}]) \partial_x \widetilde{X}^{v,[\theta]}_r I^2(1)(t,x,[\theta]) \,  \partial_{v} \widetilde{X}_t^{v,[\theta]} \\
                = \: & \widetilde{\D}_r \left[G_l \left( \Xd_t, [\Xt_t], \widetilde{X}^{v,[\theta]}_t \right) \right] \sigma^{\top} (\sigma \sigma^{\top})^{-1} (\widetilde{X}^{v,[\theta]}_r, [\widetilde{X}_r^{\theta}]) \partial_x \widetilde{X}^{v,[\theta]}_r I^2(1)(t,x,[\theta]) \,  \partial_{v} \widetilde{X}_t^{v,[\theta]}
        \end{align*}
        and, applying the integration by parts formula in Proposition \ref{th:IBPx1} on the space $\left(\t \Omega, \t \F, \t \PP \right)$, we get
        \begin{align*}
                &       \EE \widetilde{\EE} \left[ \widetilde{\D}_r \left[G_l \left( \Xd_t, [\Xt_t], \widetilde{X}^{v,[\theta]}_t \right) \right] \sigma^{\top} (\sigma \sigma^{\top})^{-1} (\widetilde{X}^{v,[\theta]}_r, [\widetilde{X}_r^{\theta}]) \partial_x \widetilde{X}^{v,[\theta]}_r I^2(1)(t,x,[\theta]) \,  \partial_{v} \widetilde{X}_t^{v,[\theta]} \right] \\
                = \: &  t^{-1/2} \,     \EE \widetilde{\EE} \left[ G_l \left( \Xd_t, [\Xt_t], \widetilde{X}^{v,[\theta]}_t \right) \widetilde{I}^2 \left(  \partial_{x} \widetilde{X}_{\cdot}^{\cdot,\cdot}\right)(t,v,[\theta]) \, I^2(1)(t,x,[\theta]) \right].
        \end{align*}
        So, \eqref{eq:dmudvU} becomes
        \begin{align}
                \label{eq:dmudvUIBP}
                \begin{split}
                        & \partial_v\partial_{\mu}U_l(t,x,[\theta],v)  = t^{-1} \, \EE \bigg\{  \sqrt{t} \, g_l \left( \Xd_t, [\Xt_t] \right)\, \partial_v \I^1(t,x,[\theta],v) \\
                        & + \widetilde{\EE} \big[ G_l \left( \Xd_t, [\Xt_t],  \widetilde{X}^{v,[\theta]}_t  \right) \, I^2(1)(t,x,[\theta]) \left( \widetilde{I}^2 \left(  \partial_{x} \widetilde{X}_{\cdot}^{\cdot,\cdot}\right)(t,v,[\theta]) + \sqrt{t} \,  \,  \partial^2_{v} \widetilde{X}_t^{v,[\theta]} \right) \\
                        & \quad \: + \sqrt{t} \, G_l \left( \Xd_t, [\Xt_t],\widetilde{X}^{\t \theta}_t \right) \, I^2(1)(t,x,[\theta]) \, \partial_v \partial_{\mu} \widetilde{X}_t^{\t \theta,[\theta]}(v) \big] \bigg\}.
                \end{split}
        \end{align}
        
        We can check each expectation above is finite by using the growth conditions on the functions $g_l$, $G_l$ and their derivatives along with H\"older's inequality and the moment estimates on the processes involved, similar to before. In particular, note that we can obtain estimates on \eqref{eq:dmuIBPx} and \eqref{eq:dmudvUIBP} independently of $l$. This allows us to use dominated convergence to pass to the limit in these equations.
        
        Now, suppose that  \textbf{(H2)} holds instead of \textbf{(H1)}. Under \textbf{(H2)}, $g$ in the class \textbf{(IC)$_v$}.
        By Lemma \ref{lem:dmu}, we have an expression for  $\partial_{\mu}U_l$ and using the special form of $\partial_{\mu}g_l$ for initial conditions in the class \textbf{(IC)$_v$}, we get
        \begin{align}
                \label{eq:Uv}
                \begin{split}
                        & \partial_{\mu}U_l(t,x,[\theta],v) = \EE \left[ \partial g_l \left( \Xd_t, [\Xt_t] \right) \partial_{\mu} \Xd_t (v)   \right] \\
                        & \quad + \:  \EE \widetilde{\EE} \left[ \partial_{v} G_l \left( \Xd_t, [\Xt_t], \widetilde{X}^{v,[\theta]}_t \right) \partial_{v} \widetilde{X}_t^{v,[\theta]} +  \partial_{v} G_l \left( \Xd_t, [\Xt_t], \widetilde{X}^{\t \theta}_t \right) \partial_{\mu} \widetilde{X}_t^{\t \theta,[\theta]}(v)   \right].
                \end{split}
        \end{align}
        We again use that
        \[
        Id_N = \D_r X^{x,[\theta]}_t \sigma^{\top} (\sigma \sigma^{\top})^{-1} (X^{x,[\theta]}_r, [X_r^{\theta}]) \partial_x X^{x,[\theta]}_r \left(   \partial_x X^{x,[\theta]}_t\right)^{-1}.
        \]
        Of course, this identity also holds for `tilde' processes defined on $\left(\t \Omega, \t \F, \t \PP \right)$ and we denote by $\widetilde{\D}$ the Malliavin derivative on this space.
        So,  using the above identity and the Malliavin chain rule, we obtain
        \begin{align*}
                &       \partial_{v} G_l \left( \Xd_t, [\Xt_t], \widetilde{X}^{v,[\theta]}_t \right) \partial_{v} \widetilde{X}_t^{v,[\theta]} \\
                = \: & \partial_{v} G_l \left( \Xd_t, [\Xt_t], \widetilde{X}^{v,[\theta]}_t \right) \widetilde{\D}_r \widetilde{X}^{v,[\theta]}_t \sigma^{\top} (\sigma \sigma^{\top})^{-1} (\widetilde{X}^{v,[\theta]}_r, [\widetilde{X}_r^{\theta}]) \partial_x \widetilde{X}^{v,[\theta]}_r \\
                = \: & \widetilde{\D}_r \left[G_l \left( \Xd_t, [\Xt_t], \widetilde{X}^{v,[\theta]}_t \right) \right] \sigma^{\top} (\sigma \sigma^{\top})^{-1} (\widetilde{X}^{v,[\theta]}_r, [\widetilde{X}_r^{\theta}]) \partial_x \widetilde{X}^{v,[\theta]}_r
        \end{align*}
        and, applying the integration by parts formula in Proposition \ref{th:IBPx1} on the space $\left(\t \Omega, \t \F, \t \PP \right)$, we get
        \begin{align*}
                &       \EE \widetilde{\EE} \left[ \widetilde{\D}_r \left[G_l \left( \Xd_t, [\Xt_t], \widetilde{X}^{v,[\theta]}_t \right) \right] \sigma^{\top} (\sigma \sigma^{\top})^{-1} (\widetilde{X}^{v,[\theta]}_r, [\widetilde{X}_r^{\theta}]) \partial_x \widetilde{X}^{v,[\theta]}_r \right] \\
                = \: &  t^{-1/2} \,     \EE \widetilde{\EE} \left[ G_l \left( \Xd_t, [\Xt_t], \widetilde{X}^{v,[\theta]}_t \right) \widetilde{I}(1)(t,v,[\theta])\right].
        \end{align*}
        Similarly,
        \begin{align*}
                &        \partial_{v} G_l \left( \Xd_t, [\Xt_t], \widetilde{X}^{\t \theta}_t \right) \partial_{\mu} \widetilde{X}_t^{\t \theta,[\theta]}(v)\\
                = \: & \partial_{v} G_l \left( \Xd_t, [\Xt_t], \widetilde{X}^{\t \theta}_t \right)\widetilde{\D}_r \widetilde{X}^{\t \theta}_t \sigma^{\top} (\sigma \sigma^{\top})^{-1} (\widetilde{X}^{\t \theta}_r, [X_r^{\theta}]) \partial_x \widetilde{X}^{\t \theta,[\theta]}_r \left(  \partial_x \widetilde{X}^{\t \theta,[\theta]}_t\right)^{-1}  \partial_{\mu} \widetilde{X}_t^{\t \theta,[\theta]}(v) \\
                = \: & \widetilde{\D}_r \left[G_l \left( \Xd_t, [\Xt_t],\widetilde{X}^{\t \theta}_t \right) \right] \sigma^{\top} (\sigma \sigma^{\top})^{-1} (\widetilde{X}^{\t \theta}_r, [X_r^{\theta}]) \partial_x \widetilde{X}^{\t \theta,[\theta]}_r \left(  \partial_x \widetilde{X}^{\t \theta,[\theta]}_t\right)^{-1}  \partial_{\mu} \widetilde{X}_t^{\t \theta,[\theta]}(v)
        \end{align*}
        and applying the integration by parts formula in Proposition \ref{th:IBPmu1} on the space $\left(\t \Omega, \t \F, \t \PP \right)$, we get
        \begin{align*}
                &       \EE \widetilde{\EE} \left[ \widetilde{\D}_r \left[G_l \left( \Xd_t, [\Xt_t],\widetilde{X}^{\t \theta}_t \right) \right] \sigma^{\top} (\sigma \sigma^{\top})^{-1} (\widetilde{X}^{\t \theta}_r, [X_r^{\theta}]) \partial_x \widetilde{X}^{\t \theta,[\theta]}_r \left(  \partial_x \widetilde{X}^{\t \theta,[\theta]}_t\right)^{-1}  \partial_{\mu} \widetilde{X}_t^{\t \theta,[\theta]}(v) \right] \\
                = \: & t^{-1/2} \, \EE \widetilde{\EE} \left[ G_l \left( \Xd_t, [\Xt_t],\widetilde{X}^{\t \theta}_t \right) \, \widetilde{\I}^1(1)(t,\t \theta,[\theta],v) \right].
        \end{align*}
        Here we explain the reason for insisting that the coefficients $\Vs$ are bounded: the Kusuoka-Stroock process $\widetilde{\I}^1(1)(t,x,[\theta],v)$ is bounded in $L^p(\t \Omega)$ uniformly in $(x,[\theta],v)$. This allows us to evaluate at $x=\t \theta$ and take expectation with respect to $\widetilde{\EE}$. If the coefficients are not bounded, the bound we have on  $\|\widetilde{\I}^1(1)(t,x,[\theta],v)\|_p$ grows like $|x|^4$ according to Proposition \ref{prop:defwieghts} and we cannot guarantee that $\textstyle \EE \widetilde{\EE} \left[ \widetilde{\I}^1(1)(t,\t \theta,[\theta],v) \right]$ is finite.
        
        Putting the above integration by parts formulas together and using Proposition \ref{th:IBPmu1} on the space $\left( \Omega,  \F,  \PP \right)$ for the first term on the right hand side of \eqref{eq:Uv}, we see that it can be re-written as
        \begin{align}
                \label{eq:dmuIBP}
                \begin{split}
                        & \partial_{\mu}U_l(t,x,[\theta],v)  = t^{-1/2} \, \EE \bigg\{ g_l \left( \Xd_t, [\Xt_t] \right) \I^1(t,x,[\theta],v) \\
                        & + \widetilde{\EE} \left[ G_l \left( \Xd_t, [\Xt_t], \widetilde{X}^{v,[\theta]}_t \right) \widetilde{I}^1(1)(t,v,[\theta]) +
                        G_l \left( \Xd_t, [\Xt_t],\widetilde{X}^{\t \theta}_t \right) \, \widetilde{\I}^1(1)(t,\t \theta,[\theta],v) \right] \bigg\}
                \end{split}
        \end{align}
        and we note the RHS does not depend on derivatives of the functions $g$ and $G$.
        Also,
        \begin{align}
                \label{eq:dvdmuv}
                \begin{split}
                        & \partial_v \partial_{\mu}U_l(t,x,[\theta],v)  = t^{-1/2} \, \EE \bigg\{ g_l \left( \Xd_t, [\Xt_t] \right) \partial_v \I^1(t,x,[\theta],v) \\
                        & + \widetilde{\EE} \bigg[ G_l \left( \Xd_t, [\Xt_t], \widetilde{X}^{v,[\theta]}_t \right) \partial_v \widetilde{I}^1(1)(t,v,[\theta]) +
                        G_l \left( \Xd_t, [\Xt_t],\widetilde{X}^{\t \theta}_t \right) \, \partial_v \widetilde{\I}^1(1)(t,\t \theta,[\theta],v)  \bigg\} \\
                        & +  \partial_v G_l \left( \Xd_t, [\Xt_t], \widetilde{X}^{v,[\theta]}_t \right) \, \partial_v \widetilde{X}^{v,[\theta]}_t \, \widetilde{I}^1(1)(t,v,[\theta])  \bigg] \bigg\}
                \end{split}
        \end{align}
        so, applying Proposition \ref{th:IBPx1}, we get
        \begin{align}
                \label{eq:IBPdvdmuv}
                \begin{split}
                        & \partial_v \partial_{\mu}U_l(t,x,[\theta],v)  = t^{-1/2} \, \EE \bigg\{ g_l \left( \Xd_t, [\Xt_t] \right) \partial_v \I^1(t,x,[\theta],v) \\
                        & + \widetilde{\EE} \bigg[ G_l \left( \Xd_t, [\Xt_t], \widetilde{X}^{v,[\theta]}_t \right) \partial_v \widetilde{I}^1(1)(t,v,[\theta]) +
                        G_l \left( \Xd_t, [\Xt_t],\widetilde{X}^{\t \theta}_t \right) \, \partial_v \widetilde{\I}^1(1)(t,\t \theta,[\theta],v)  \bigg\} \\
                        & +  t^{-1/2} G_l \left( \Xd_t, [\Xt_t], \widetilde{X}^{v,[\theta]}_t \right) \, \widetilde{I}^1 \left(\widetilde{I}^1(1)\right)(t,v,[\theta])  \bigg] \bigg\}.
                \end{split}
        \end{align}
        
\end{proof}

\begin{remark}
Immediately from the proof of Lemma \ref{lem:ctyU} one can deduce the following gradient bounds for the function $U(t,x,[\theta]):= \EE \left[g\left(\Xd_t \left[\Xt_t\right]\right)\right]$ under the same conditions \textbf{(H1)} or \textbf{(H2)}:
There exists positive constants $C$ and $q$ such that for any $(t,x,[\theta]) \in (0,T] \times \RR^N \times \p_2(\RR^N), v \in \RR^N $  
\begin{align*}
                |\partial^i_x U(t,x,[\theta])|&,|\partial_{\mu} U(t,x,[\theta])|\le\ Ct^{-1/2} (1+|x|+\|\theta\|_2)^q,\\
                |\partial^{(i,j)}_x U(t,x,[\theta])| &, |\partial_{v}\partial_{\mu} U(t,x,[\theta])|\le Ct^{-1} (1+|x|+\|\theta\|_2)^q.
\end{align*}

\end{remark}

\noindent We now define what we mean by a classical solution to the PDE \eqref{eq:MasterPDE}.

\begin{definition}
        \label{def:classical}
        Suppose that $U: [0,T] \times \RR^N \times \p_2(\RR^N) \to \RR$ satisfies \eqref{eq:MasterPDE} and
        \begin{align*}
                (0,T]\times\RR^N\times\p_2(\RR^N) \ni(t,x,[\theta]) & \mapsto \left( \partial_x U(t,x,[\theta]), \, \partial^2_{x,x}U(t,x,[\theta]) \right) \\ 
                (0,T]\times\RR^N\times\p_2(\RR^N)\times \RR^N \ni(t,x,[\theta],v) & \mapsto \left(  \partial_{\mu}U(t,x,[\theta],v), \, \partial_v \partial_{\mu}U(t,x,[\theta],v) \right)
        \end{align*}
        exist and are continuous. Moreover, suppose that for all $(x,\theta) \in \RR^N \times L^2(\Omega)$
        \begin{equation}
        \label{eq:bdycty}
        \lim_{(t,y,[\gamma]) \to (0,x,[\theta]) } U(t,y,[\gamma]) = g(x,[\theta]).
        \end{equation}
        Then we say that $U$ is a classical solution to the PDE \eqref{eq:MasterPDE}.
\end{definition}

\begin{theorem}
        \label{th:masterPDE}
        Suppose that either \textbf{(H1)} or \textbf{(H2)} holds. Then 
        $$U(t,x,[\theta]):= \EE \left( g\left(X_t^{x, [\theta]},\left[\Xt_t \right]\right) \right)$$ is a classical solution of the PDE \eqref{eq:MasterPDE}. Moreover, $U$
        is unique among all of the classical solutions satisfying the polynomial growth condition $\left|U(t,x,[\theta])\right| \leq C (1+|x|+\|\theta\|_2)^q$ for some $q>0$ and all $(t,x,[\theta]) \in [0,T] \times \RR^N \times \p_2(\RR^N)$.
\end{theorem}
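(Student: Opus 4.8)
The plan is to establish the two assertions — that $U$ is a classical solution and that it is the unique one of polynomial growth — separately, leaning on Lemma \ref{lem:ctyU} to supply exactly the regularity that Definition \ref{def:classical} and the flow-of-measures chain rule of \cite{crischassdel} require. Lemma \ref{lem:ctyU} already yields the existence and continuity of $\partial_x U$, $\partial^2_{x,x} U$, $\partial_\mu U$ and $\partial_v \partial_\mu U$ on $(0,T]\times\RR^N\times\p_2(\RR^N)$ together with the diagonal square-integrability, so for existence it only remains to verify the equation and the boundary behaviour. To obtain the equation I would first exploit the flow property of Proposition \ref{prop:MKVSDE}: conditioning on $\F_h$ and using that $[\Xt_h]$ is deterministic while $\Xd_h$ is random, the Markov/flow structure gives the semigroup identity
\[
U(t+h,x,[\theta]) = \EE\left[ U\left(t, \Xd_h, [\Xt_h]\right) \right] \qquad (t,h>0).
\]
Applying the Itô-type chain rule of \cite{crischassdel} to $(y,[\rho]) \mapsto U(t,y,[\rho])$ along $(\Xd_s,[\Xt_s])_{0\le s\le h}$ — legitimate since $U(t,\cdot,\cdot)$ meets its hypotheses by Lemma \ref{lem:ctyU} — and taking expectations to annihilate the martingale part produces
\[
\EE\left[ U(t,\Xd_h,[\Xt_h]) \right] - U(t,x,[\theta]) = \int_0^h \EE\left[ \mathcal{L} U\left(t,\Xd_s,[\Xt_s]\right) \right]\,ds,
\]
with $\mathcal{L}$ precisely the spatial-plus-measure generator of \eqref{eq:MasterPDE}. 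Dividing by $h$ and letting $h\downarrow 0$, continuity of $s\mapsto\EE[\mathcal{L} U(t,\Xd_s,[\Xt_s])]$ at $s=0$ gives $\partial^+_t U = \mathcal{L} U$, which upgrades to $\partial_t U = \mathcal{L} U$ on $(0,T]$ because $\mathcal{L} U$ is continuous in $t$.

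For the boundary condition \eqref{eq:bdycty} I would use that $\Xd_t\to x$ in every $L^p$ and $[\Xt_t]\to[\theta]$ in $W_2$ as $(t,y,[\gamma])\to(0,x,[\theta])$, quantified by the estimates \eqref{eq:MKVLpreg} and \eqref{eq:MKVLpregtime}, together with the continuity and polynomial growth of $g$ from the class \textbf{(IC)}. The polynomial growth and the uniform moment bound \eqref{eq:decoupledmoment} then furnish the uniform integrability needed to pass the limit inside the expectation, so that $U(t,y,[\gamma])\to g(x,[\theta])$, completing the verification that $U$ is a classical solution.

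For uniqueness, let $\tilde U$ be any classical solution obeying $|\tilde U(t,x,[\theta])|\le C(1+|x|+\|\theta\|_2)^q$, fix $(t,x,[\theta])$, and set $\psi(s):=\EE[\tilde U(t-s,\Xd_s,[\Xt_s])]$ for $s\in[0,t)$. The same chain rule, combined with $(\partial_t-\mathcal{L})\tilde U = 0$, gives $\psi'(s)=\EE[(-\partial_t+\mathcal{L})\tilde U(t-s,\Xd_s,[\Xt_s])]=0$, so $\psi$ is constant and $\tilde U(t,x,[\theta])=\psi(0)=\lim_{s\uparrow t}\psi(s)$. Letting $s\uparrow t$, the boundary condition \eqref{eq:bdycty} forces $\tilde U(t-s,\Xd_s,[\Xt_s])\to g(\Xd_t,[\Xt_t])$ almost surely, and polynomial growth together with the moment estimates again supplies uniform integrability, so the limit equals $\EE[g(\Xd_t,[\Xt_t])]=U(t,x,[\theta])$; hence $\tilde U = U$.

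I expect the main obstacle to lie not in the algebra but in the rigorous justification of the chain-rule expansions: one must confirm that $U(t,\cdot,\cdot)$ and $\tilde U(t,\cdot,\cdot)$ genuinely fall within the scope of the flow-of-measures Itô formula of \cite{crischassdel}, in particular through the joint continuity and the diagonal square-integrability of the measure derivatives provided by Lemma \ref{lem:ctyU}, and that all limit passages — both $h\downarrow 0$ in the existence argument and $s\uparrow t$ near the $t=0$ singularity in the uniqueness argument — are controlled by the uniform integrability coming from the polynomial growth and the $L^p$ moment estimates of Proposition \ref{prop:MKVSDE}.
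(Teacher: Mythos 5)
Your proposal is correct and follows essentially the same route as the paper: the flow property gives the semigroup identity, the chain rule of \cite{crischassdel} plus It\^o's formula (justified by Lemma \ref{lem:ctyU}) yields $\partial_t U = \mathcal{L}U$ after dividing by $h$, the boundary condition follows from \eqref{eq:MKVLpreg} and continuity of $g$, and uniqueness comes from expanding any polynomially growing classical solution along $(X^{x,[\theta]}_s,[X^{\theta}_s])$ up to a positive distance from the terminal time and then using \eqref{eq:bdycty}. The only cosmetic difference is that the paper splits the increment $U(t+h,x,[\theta])-U(t,x,[\theta])$ into a pure-measure part (handled by the chain rule with $x$ frozen) and a pure-space part (handled by classical It\^o with the measure frozen at $[X_h^{\theta}]$), whereas you invoke the joint chain rule along $(X^{x,[\theta]}_s,[X^{\theta}_s])$ directly; both are legitimate given the regularity supplied by Lemma \ref{lem:ctyU}.
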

\begin{proof}
        \emph{Existence:}
        To prove continuity at the boundary, we use continuity of $g$ and the fact that 
        \[
        \left\| \Xt_t - \theta \right\|_2 +     \left\| \Xd_t - x \right\|_2 \to 0 \quad \text{ as } \quad t \to 0,
        \]
        which follows from \eqref{eq:MKVLpreg}.
        
        Now, we note that by the flow property we have, for $h >0$, 
        \[
        \left( X_{t+h}^{x,[\theta]}, X_{t+h}^{\theta} \right) = \left( X_t^{X_{h}^{x,[\theta]},[X_{h}^{[\theta]}]}, X_t^{X_h^{\theta}} \right)
        \]
        so that,
        \begin{align*}
                U(t+h,x,[\theta]) &= \EE \left[ g \left( X_{t+h}^{x,[\theta]},\left[ X^{\theta}_{t+h}\right] \right) \right]
                = \EE \left[ \EE  \left. \left\{ g \left( X_t^{X_h^{x,[\theta]},[X_h^{\theta}]}, \left[ X_t^{X_h^{\theta}}\right] \right)  \right\} \right| \mathcal{F}_h \right]\\
                &= \EE \, U(t,X_h^{x,[\theta]},[X_h^{\theta}]).
        \end{align*}
        Hence,
        \begin{align}
                \nonumber
                & U(t+h,x,[\theta]) - U(t,x,[\theta]) \\
                \nonumber
                =& \, \EE \, U(t,X_h^{x,[\theta]},[X_h^{\theta}]) - U(t,x,[\theta]) \\
                \label{eq:Uincrement}
                = &  \left\{  U(t,x,[X_h^{\theta}]) - U(t,x,[\theta]) \right\}  + \EE \left\{ U(t,X_h^{x,[\theta]},[X_h^{\theta}]) - U(t,x,[X_h^{\theta}]) \right\}.
        \end{align}
        The idea is to expand the first term using the chain rule introduced in \cite{crischassdel} and the second term using It\^o's formula. Then, dividing by $h$ and sending it to 0, along with continuity of the terms appearing in the expansion, will prove that $U$ indeed solves the PDE \eqref{eq:MasterPDE}. 
        
        Lemma \ref{lem:ctyU} guarantees that we can apply the chain rule proved in \cite{crischassdel}. 
        We apply it to the function $U(t,x,\cdot)$ to get
        \begin{align*}
                & U(t,x,[X_h^{\theta}]) - U(t,x,[\theta]) = \int_0^h \EE \left[ \sum_{i=1}^N V_0^i(X_r^{ \theta },[X_r^{ \theta}]) \, \partial_{\mu} U(t,x,[X_r^{ \theta}],X_r^{ \theta} )_i \right] \, dr \\
                & \quad + \frac{1}{2} \int_0^h \EE \left[  \sum_{i,j=1}^N [\sigma \sigma^{\top} (X_r^{ \theta} ,[X_r^{ \theta}])]_{i,j} \, \partial_{v_j} \partial_{\mu} U(t,x,[X_r^{ \theta}],X_r^{ \theta} )_i \right] \, dr
 .       \end{align*}
        It\^o's formula applied to $U(t,\cdot,[X_h^{\theta}])$ gives
        \begin{align*}
                & U(t,X_h^{x,[\theta]},[X_h^{\theta}]) - U(t,x,[X_h^{\theta}]) = \int_0^h \sum_{i=1}^N V_0^i(X_r^{x,[\theta]},[X_r^{\theta}]) \, \partial_{x_i} U(t,X_r^{x,[\theta]},[X_h^{\theta}]) \, dr \\
                & + \frac{1}{2} \int_0^h \sum_{i,j=1}^N [\sigma \sigma^{\top} (X_r^{x,[\theta]},[X_r^{\theta}])]_{i,j} \, \partial_{x_i} \partial_{x_j}  U(t,X_r^{x,[\theta]},[X_h^{\theta}]) \, dr \\
                & + \int_0^h \sum_{j=1}^d \sum_{i=1}^N V_j^i(X_r^{x,[\theta]},[X_r^{\theta}]) \, \partial_{x_i} U(t,X_r^{x,[\theta]},[X_h^{\theta}]) \, dB^j_r
.        \end{align*}
        We want the final term to be square integrable, so that it is a true martingale with zero expectation. We have that for some $q>0$,
        \begin{align*}
                \left| \partial_{x_i} U(t,x,[\theta]) \right| &\leq t^{-1/2} \left\| g(\Xd_t,[\Xt_t]) \right\|_2 \left\|I^1_{(i)}(1)(t,x,[\theta])\right\|_2 \\
                & \leq C\,  t^{-1/2} \left\| \left(1+ \left| \Xd_t \right| + \|\Xt_t\|_2 \right)^q \right\|_2  \left(1+ |x|+\|\theta\|_2 \right)^3 \\
                & \leq C \,  t^{-1/2}  \left(1+ |x|+\|\theta\|_2\right)^{q+3},
        \end{align*}
        so that for all $p \geq 1$,
        \begin{align*}
                \EE \left|\partial_{x_i} U \left(t,X_r^{x,[\theta]},[X_h^{\theta}]\right)\right|^p & \leq C\,t^{-1/2} \EE \left(1+ \left| X_r^{x,[\theta]} \right|+ \left\|X_h^{\theta}\right\|_2\right)^{p(q+3)} \\
                &  \leq C\,t^{-1/2} \EE \left(1+ \left| x \right|+ \left\|\theta\right\|_2\right)^{p(q+3)},
        \end{align*}
        and by the linear growth of $V_j^i$, we have
        \[
        \EE \left|      V_j^i(X_r^{x,[\theta]},[X_r^{\theta}]) \right|^p \leq C (1+|x|+\|\theta\|_2)^p.
        \]
        Hence, the final term is indeed square integrable, and has zero expectation.
        
        Putting the expansions back into \eqref{eq:Uincrement}, we get
        \begin{align*}
                & U(t+h,x,[\theta]) - U(t,x,[\theta]) \\
                = & \int_0^h \EE \left[ \sum_{i=1}^N V_0^i(X_r^{ \theta} ,[X_r^{ \theta}]) \, \partial_{\mu} U(t,x,[X_r^{ \theta}],X_r^{ \theta} )_i \right] \, dr \\
                & \quad + \frac{1}{2} \int_0^h \EE \left[  \sum_{i,j=1}^N [\sigma \sigma^{\top} (X_r^{ \theta} ,[X_r^{ \theta}])]_{i,j} \, \partial_{v_j} \partial_{\mu}  U(t,x,[X_r^{ \theta}],X_r^{\theta} )_i \right] \, dr \\
                & \quad + \EE \int_0^h \sum_{i=1}^N V_0^i(X_r^{x,[\theta]},[X_r^{\theta}]) \, \partial_{x_i} U(t,X_h^{x,[\theta]},[X_h^{\theta}]) \, dr \\
                & \quad +  \frac{1}{2} \EE \int_0^h \sum_{i,j=1}^N [\sigma \sigma^{\top} (X_r^{x,[\theta]},[X_r^{\theta}])]_{i,j} \, \partial_{x_i} \partial_{x_j}  U(t,X_h^{x,[\theta]},[X_h^{\theta}]) \, dr
      .  \end{align*}
        By the earlier results on continuity of $U$ and its derivatives and the a priori continuity of the coefficients $V_0, \ldots, V_d$ we see that the integrand on the right hand side is a continuous function of $h$. Dividing by $h$ and sending it to zero, we see that $U$ solves the PDE \eqref{eq:MasterPDE}.
        
        \emph{Uniqueness:} Fix any $t \in (0,T]$ and any classical solution $W$ with polynomial growth.  Set $\delta>0$, so 
        \begin{align*}
                & W(t,x,[\theta]) - W(0,\Xd_t,[\Xt_t]) \\
                & = W(t,x,[\theta]) - W(\delta,\Xd_{t-\delta},[\Xt_{t-\delta}]) + W(\delta,\Xd_{t-\delta},[\Xt_{t-\delta}]) - W(0,\Xd_t,[\Xt_t]).
        \end{align*}
        By the polynomial growth of $W$, this is square integrable.
        Now we expand the process $(W(t-s,\Xd_s,[\Xt_s]))_{s \in [\delta,t]}$ and use that $W$ is a solution of the PDE \eqref{eq:MasterPDE}, so that the drift is zero, to get
        \begin{align*}
                & W(t,x,[\theta]) - W(0,\Xd_t,[\Xt_t]) \\
                & = \sum_{j=1}^d \sum_{i=1}^N \int_{\delta}^t V_j^i(X_r^{x,[\theta]},[X_r^{\theta}]) \, \partial_{x_i} W(t-r,X_r^{x,[\theta]},[X_r^{\theta}]) \, dB^j_r \\
                &  \quad \quad + W(\delta,\Xd_{t-\delta},[\Xt_{t-\delta}]) - W(0,\Xd_t,[\Xt_t])
     .   \end{align*}
        As we have already noted, this is square-integrable, so the stochastic integral is a true martingale with zero expectation. So taking expectation in the above expansion, we get:
        \[
        \, W(t,x,[\theta]) -    \EE  W(0,\Xd_t,[\Xt_t])  = \EE \left[W(\delta,\Xd_{t-\delta},[\Xt_{t-\delta}]) - W(0,\Xd_t,[\Xt_t])\right] 
      .  \] 
        Now, sending $\delta \searrow 0$ and using continuity of $W$ at the boundary (condition \eqref{eq:bdycty} in the definition of classical solution), the right hand side disappears, and we get that
        \[
        W(t,x,[\theta]) =       \EE \,  W(0,\Xd_t,[\Xt_t]) = \EE \left[ g \left(\Xd_t,[\Xt_t]\right) \right],
        \]
        which  completes the proof.
\end{proof}

\section{Application to the density function}
\label{sec:densities}

In this section, we apply the integration by parts formulae to the study of the density function $p(t,x,z)$ of the McKean-Vlasov SDE started from a fixed point, $X_t^{x,\delta_x}$, at a fixed time $t \in [0,T]$. 
Throughout this section, we assume that (UE) holds and  $\Vs \in  \C^{k,k}_{b,Lip}(\RR^N \times \p_2(\RR^N);\RR^N)$. We can consider $\Xd_t$ as the solution of a classical SDE with time-dependent coefficients. Hence, under (UE), the smoothness of its density (call it $q(t,x,[\theta],\cdot)$) has been studied in the classical work of Friedman \cite{friedmanparabolic}. Since $p(t,x,z)=q(t,x,\delta_x,z)$, Friedman's results also establish the smoothness of $p(t,x,z)$ in the forward variable, $z$. However, they do not cover the smoothness of the function $p(t,x,z)$ in the backward variable, $x$. The density $p(t,x,z)$ has also been studied by Antonelli \& Kohatsu-Higa in \cite{antonellikohatsu} under a H\"ormander condition on the coefficients. In this case, they establish smoothness of the density in the forward variable, $z$, but do not establish estimates on the derivatives of this function. The theorem which follows esatblishes the smoothness of $p(t,x,z)$ in the variables $(x,z)$ and we also obtain estimates on its derivatives.

\begin{theorem}
        \label{th:densityestimate}
        Let $\alpha, \beta$ be  multi-indices on $\{1, \ldots, N\}$ and let $k \geq |\alpha|+|\beta|+N+2$. 
        Then, for all $t \in (0,T]$ and $\theta \in L^2(\Omega)$, $X_t^{x,\delta_x}$ has a density $p(t,x, \cdot)$ such that $(x,z) \mapsto  \partial_x^{\alpha} \, \partial_z^{\beta}p(t,x, z) $ exists and is continuous. Moreover, there exists a constant $C$ which depends on $T$, $N$ and bounds on the coefficients, such that for all $t \in (0,T]$
        \begin{align}
                \label{eq:densityest2}  | \partial_x^{\alpha} \, \partial_z^{\beta}  p(t,x,z) | &\leq C \, (1+ |x|)^{\mu} \,  t^{- \nu}  ,
        \end{align}
        where $ \mu = 4|\alpha|+ 3 |\beta| + 3 N$ and  $ \nu = \textstyle \frac{1}{2} (N + | \alpha| + | \beta | )$. If $\Vs$ are bounded then the following estimate holds
        \begin{align}
                \label{eq:densityest4}  | \partial_x^{\alpha} \, \partial_z^{\beta}  p(t,x;z) | &\leq C \,  t^{- \nu} \, \exp \left(- C \, \frac{|z-x|^2}{t} \right) .
        \end{align}
\end{theorem}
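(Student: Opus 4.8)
\section*{Proof proposal}

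The plan is to turn the integration by parts formula of Corollary \ref{cor:MKVIBP} into a pointwise representation of $\partial_x^\alpha \partial_z^\beta p$ and then read off both bounds from the Kusuoka--Stroock norm of the Malliavin weight. Write $F = X_t^{x,\delta_x}$, let $\mathbf{1}_N := (1,2,\dots,N)$ be the multi-index carrying one derivative in each coordinate, and set $\tilde\beta := \beta + \mathbf{1}_N$, so $|\tilde\beta| = |\beta| + N$. For a sign vector $s \in \{-1,+1\}^N$ consider the orthant indicator $f_s(y) := \prod_{i=1}^N \mathbf{1}_{\{s_i(y_i - z_i) > 0\}}$, whose distributional derivative is $\partial^{\mathbf{1}_N} f_s = (\prod_i s_i)\,\delta_z$. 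Since $\partial_z^\beta \delta_z(y) = (-1)^{|\beta|}\partial_y^\beta \delta_z(y)$ and $p(t,x,z) = \EE[\delta_z(F)]$ formally, applying Corollary \ref{cor:MKVIBP} with $\beta$ replaced by $\tilde\beta$ and with test function $f_s$ yields, for every admissible $s$,
\[
\partial_x^\alpha \partial_z^\beta p(t,x,z) = (-1)^{|\beta|}\Big(\textstyle\prod_{i} s_i\Big)\, t^{-\nu}\, \EE\!\left[ \mathbf{1}_{O^s_z}(F)\, I^2_{\tilde\beta}\big(J_\alpha(1)\big)(t,x) \right],
\]
where $O^s_z := \{y : s_i(y_i - z_i) > 0 \ \forall i\}$ and $\nu = \tfrac12(N + |\alpha|+|\beta|)$; the factor $\prod_i s_i$ makes the right-hand side independent of $s$. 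To make this rigorous I would first approximate $f_s$ by smooth bounded functions, apply Corollary \ref{cor:MKVIBP}, and pass to the limit: the bound $|\EE[(\partial^{\tilde\beta}\varphi)(F)]| \le C\|\varphi\|_\infty$ coming from the formula (valid since $\tilde\beta \ge \mathbf{1}_N$) is exactly the classical criterion guaranteeing that $F$ admits a density $p(t,x,\cdot)$, differentiable up to the order permitted by $k$, and the representation then identifies its derivatives. Continuity of $(x,z)\mapsto \partial_x^\alpha\partial_z^\beta p$ follows from this formula, using that $I^2_{\tilde\beta}(J_\alpha(1))$ is jointly $L^p$-continuous in $(t,x)$ by its Kusuoka--Stroock property, that $x\mapsto F$ is $L^p$-continuous, and that $\partial O^s_z$ is $[F]$-null because $F$ has a density.

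For the polynomial estimate \eqref{eq:densityest2} I would bound the orthant indicator by $1$ and invoke the membership $I^2_{\tilde\beta}(J_\alpha(1)) \in \K^{4|\alpha|+3|\tilde\beta|}_0(\RR,\,k-2-|\alpha|-|\tilde\beta|)$ supplied by Lemma \ref{lem:Jprops} and Proposition \ref{prop:defwieghts}; this is where the hypothesis $k \ge |\alpha|+|\beta|+N+2$ enters, keeping the smoothness index nonnegative. Since $4|\alpha| + 3|\tilde\beta| = 4|\alpha| + 3|\beta| + 3N = \mu$ and the $t$-growth index is $r = 0$, Definition \ref{KSPDefn} gives $\|I^2_{\tilde\beta}(J_\alpha(1))(t,x)\|_1 \le C(1+|x|)^\mu$, whence $|\partial_x^\alpha\partial_z^\beta p(t,x,z)| \le C\,t^{-\nu}(1+|x|)^\mu$.

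The Gaussian estimate \eqref{eq:densityest4} is the delicate part, and it is where the freedom in the sign vector $s$ is exploited. When the $\Vs$ are bounded, the bounded-coefficient clauses of Lemma \ref{lem:Jprops} and Proposition \ref{prop:defwieghts} place the weight in $\K^0_0$, so $\|I^2_{\tilde\beta}(J_\alpha(1))(t,x)\|_2 \le C$ uniformly in $x$. For $|z-x| \le \sqrt t$ the inequality is immediate from this uniform bound, since $\exp(-C|z-x|^2/t)$ is then bounded below. For $|z-x| > \sqrt t$ I would choose $s_i := \operatorname{sign}(z_i - x_i)$, so that on $O^s_z$ the coordinate $j$ realising $|z_j - x_j| \ge |z-x|/\sqrt N$ satisfies $|F_j - x_j| \ge |z_j - x_j|$, giving $\mathbf{1}_{O^s_z}(F) \le \mathbf{1}_{\{|F_j - x_j| \ge |z-x|/\sqrt N\}}$ with $F_j := (X_t^{x,\delta_x})_j$. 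Cauchy--Schwarz then yields
\[
|\partial_x^\alpha\partial_z^\beta p(t,x,z)| \le t^{-\nu}\,\big\|I^2_{\tilde\beta}(J_\alpha(1))(t,x)\big\|_2\, \PP\!\left(|F_j - x_j| \ge |z-x|/\sqrt N\right)^{1/2},
\]
and it remains to supply the sub-Gaussian tail $\PP(|F_j - x_j| \ge \lambda) \le C\exp(-c\lambda^2/t)$ for $t\in(0,T]$. This holds because, with bounded coefficients, $X^{x,\delta_x}-x$ is a bounded drift plus a continuous martingale whose bracket is controlled by $t$, so Bernstein's exponential martingale inequality applies after absorbing the drift. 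Combining the two regimes produces the factor $\exp(-C|z-x|^2/t)$.

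The main obstacle is precisely this Gaussian bound: a single fixed-orthant (Heaviside) representation fails in the far field, because the indicator is close to $1$ when $z$ lies below $x$ and then carries no decay; the resolution is to let the corner of the orthant track $\operatorname{sign}(z-x)$, so that membership in $O^s_z$ always forces the process to travel a distance of order $|z-x|$ from its start, and pairing this with the sub-Gaussian concentration of $X^{x,\delta_x}-x$ is what generates the exponential. A secondary technical point is the rigorous passage from smooth test functions to orthant indicators, which must be secured before the Dirac-mass heuristic for $p$ can be justified.
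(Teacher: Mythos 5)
Your proposal follows the paper's route for the existence of the density, the representation of $\partial_x^{\alpha}\partial_z^{\beta}p$, and the polynomial bound \eqref{eq:densityest2}: the paper likewise writes $f(z_0)=\int g(z)\mathbf{1}_{\{z_0>z\}}\,dz$ so that $\partial^{\eta}f=g$ with $\eta=(1,\dots,N)$, applies Corollary \ref{cor:MKVIBP} with the multi-index $\beta*\eta$ (your $\tilde\beta$), invokes Taniguchi's criterion $\left|\partial_x^{\alpha}\EE[(\partial^{\beta}g)(X_t^{x,\delta_x})]\right|\leq C\|g\|_{\infty}$ for existence and smoothness, and reads off $\mu=4|\alpha|+3|\beta|+3N$ and $\nu=\frac12(N+|\alpha|+|\beta|)$ from the Kusuoka--Stroock class of $I^2_{\beta*\eta}(J_{\alpha}(1))$ exactly as you do. Where you genuinely diverge is the Gaussian bound \eqref{eq:densityest4}, and your version is in fact the more robust one. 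The paper keeps the single one-sided indicator $\mathbf{1}_{\{X_t^{x,\delta_x}>z\}}$ and bounds $\min_i\PP((X_t^{x,\delta_x})^i>z^i)$ by the exponential martingale inequality; but that inequality only controls the upper tail $\PP(M^i_t>\lambda)$ for $\lambda>0$, so in the regime where $z^i<x^i$ for every $i$ (with $|z-x|$ large) each of these probabilities is close to $1$ and the claimed factor $\exp(-c'|z^i-x^i-t\|V_0\|_{\infty}|^2/t)$ does not follow --- the fixed-orthant representation simply carries no decay there, exactly as you observe. Your sign-adapted orthants $O^s_z$ with $s_i=\operatorname{sign}(z_i-x_i)$, combined with Cauchy--Schwarz, the uniform $\K^0_0$ bound on the weight for bounded $\Vs$, and the two-sided sub-Gaussian tail for $X_t^{x,\delta_x}-x$, close this gap and deliver \eqref{eq:densityest4} in all regimes. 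The only points to finish carefully are the ones you already flag: the passage from $\C^{\infty}_b$ test functions to the orthant indicators (mollify and use dominated convergence together with the uniform-in-$l$ bound on the weight), and the fact that $\partial O^s_z$ is a null set for the law of $X_t^{x,\delta_x}$, which is available once the density has been produced for one fixed $s$.
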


\begin{proof}
        Let $\eta = (1,2, \ldots, N)$ and introduce the multi-dimensional indicator function
        $
        \mathbf{1}_{ \{ z_0>z \} } := \textstyle \prod_{i=1}^N \mathbf{1}_{ \{ z_0^i>z^i \} }  .
        $ For any $g \in \C^{\infty}_0(\RR^N;\RR)$ the function $f $ defined by 
        \begin{equation}
        \label{eq:FTC}
        f(z_0) := \int_{\RR^N} g (z) \mathbf{1}_{\{z_0>z \} } \, dz 
        \end{equation}
        is in $ \C^{\infty}_p(\RR^N;\RR)$ and satisfies $\partial^{\eta} f = g$. 
        Now, we first focus on $p(t,x,\cdot)$, the density of $X_t^{x,\delta_x}$.
        \begin{align}
                \nonumber       & \partial_x^{\alpha} \, \EE[ (\partial^{\beta} g)(X_t^{x,\delta_x})] \\
                \nonumber       &= \partial_x^{\alpha} \,  \EE[(\partial^{\beta * \eta} f)(X_t^{x,\delta_x})] \\
                \nonumber       &= t^{-(|\eta| + |\beta|+|\alpha| )/2} \,  \EE[ f(X_t^{x,\delta_x}) I^2_{\beta*\eta}(J_{\alpha}(1))(t,x) ] \\
                \nonumber       & = t^{\frac{-(N + |\beta|+ | \alpha|)}{2}}  \, \EE \left[ \left( \int_{\RR^N} g(z) \mathbf{1}_{ \{X_t^{x,\delta_x} > z \} } \, dz \right) \,  I^2_{\beta*\eta}(J_{\alpha}(1))(t,x) \right ] \\
                \label{eq:IBPfordensity}        & = t^{\frac{-(N + |\beta|+ | \alpha|  )}{2}} \, \int_{\RR^N} g (z) \, \EE \left[  \mathbf{1}_{ \{X_t^{x,\delta_x} > z \} } I^2_{\beta*\eta}(J_{\alpha}(1))(t,x) \right] \, dz,  
        \end{align} where we have used at each step respectively: $\partial^{\eta} f = g$; Corollary \ref{cor:MKVIBP} ; equation \eqref{eq:FTC}, and Fubini's theorem.
        It then follows that, for any $R>0$ and $t \in(0,T]$, there exists $C=C(R,t)>0$ such that
        \begin{align*}
                \sup_{|x|\leq R} \left( \left| \partial_x^{\alpha} \, \EE[ (\partial^{\beta} g)(X^x_t)] \right| +  \left| \partial_x^{\alpha} \, \EE[ (\partial^{\beta} g)(\Xd_t)] \right| \right) \leq C \, \|g\|_{\infty}.
        \end{align*}
        Then, it is a result from Taniguchi \cite[Lemma 3.1]{taniguchi1985applications} that  $X^{x,\delta_x}_t$ has a density function, $p(t,x,\cdot)$ and that  $\partial_x^{\alpha} \, \partial_z^{\beta}p(t,x, z)$ exists. Once we know that a smooth density exists, it follows from \eqref{eq:IBPfordensity} that
        we can identify $       \partial_x^{\alpha} \, \partial_z^{\beta}  p(t,x,z)$ as
        \begin{align*}
                \partial_x^{\alpha} \, \partial_z^{\beta}   p(t,x,z) &= t^{\frac{-(N + |\beta|+ | \alpha|  )}{2}} \, (-1)^{|\beta|} \, \EE \left[ \mathbf{1}_{ \{X_t^{x,\delta_x} > z \} } I^2_{\beta*\eta}(J_{\alpha}(1))(t,x) \right] .
        \end{align*}
        Now, the following estimates come from each term's membership of the Kusuoka-Stroock class, as guaranteed by Proposition \ref{prop:defwieghts} and Corollary \ref{cor:MKVIBP}:
        \begin{align*}
                \| I^2_{\beta*\eta}(J_{\alpha}(1))(t,x) \|_{p} &\leq C\, (1+|x|)^{\mu}.
        \end{align*} 
        This proves the estimate \eqref{eq:densityest2}.
        In addition, if $\Vs$ are bounded, we can estimate
        \begin{align*}
                \left\|   \mathbf{1}_{ \{X_t^{x,\delta_x} > z \} } \right\|_{p} &= \PP \left( \cap_{i=1}^N \{(X_t^{x,\delta_x})^i > z^i \}   \right) \\
                & \leq \min_{i=1, \ldots, N} \PP \left( (X_t^{x,\delta_x})^i > z^i  \right) \\
                & = \min_{i=1, \ldots, N} \PP \left( \sum_{j=1}^d \int_0^t V_j^i(X_s^{x,\delta_x},[X_s^{x,\delta_x}]) dB^j_s > z^i - x^i - \int_0^t V_0^i(X_s^{x,\delta_x},[X_s^{x,\delta_x}]) ds \right) 
  .      \end{align*}
        Now, we have that $\textstyle \int_0^t V_0^i(X_s^{x,\delta_x},[X_s^{x,\delta_x}]) ds \leq \|V_0\|_{\infty} t$ and the term
        \[
        M^i_t= \sum_{j=1}^d \int_0^t V_j^i(X_s^{x,\delta_x},[X_s^{x,\delta_x}]) dB^j_s ,
        \]
        is a martingale with quadratic variation $ \langle M^i \rangle_t \leq \textstyle \sum_{j=1}^d \|V_j\|^2 t$. We can therefore apply  the exponential martingale inequality to obtain
        \begin{align*}
                & \left\|   \mathbf{1}_{ \{X_t^{x,\delta_x} > z \} } \right\|_{p} \leq \min_{i=1, \ldots, N} \exp\left( - c' \frac{ |z^i - x^i - t \, \| V_0 \|_{\infty} \, |^2 }{t} \right).
        \end{align*}
        Then, we use $(a+b)^2 \geq \textstyle \frac{a^2}{2} - b^2$, which is re-arrangement of Young's inequality, to get
        \begin{align*}
                \frac{ |z^i - x^i - t \, \| V_0 \|_{\infty} \, |^2 }{t} \geq \frac{ |z^i - x^i  |^2 }{2t} -  \| V_0 \|^2_{\infty}.
        \end{align*}
        So, 
        \begin{align*}
                \min_{i=1, \ldots, N} \exp\left( - c' \frac{ |z^i - x^i - t \, \| V_0 \|_{\infty} \, |^2 }{t} \right) & \leq \min_{i=1, \ldots, N} \exp\left( - C \frac{ |z^i - x^i |^2 }{t} \right) \exp(c' \| V_0 \|^2_{\infty}) \\
                & \leq C \, \exp\left( - C \frac{ |z - x |^2 }{t} \right).
        \end{align*}
        This establishes \eqref{eq:densityest4}.
\end{proof}

\appendix
\section{Appendix}

\subsection{Elements of Malliavin Calculus}
\label{sec:introMalCal}

As indicated in the introduction, we will use some tools from Malliavin Calculus to develop integration by parts formulas. Here we introduce the basic terminology. We follow the exposition in \cite{crisan2010cubature}, with all proofs contained in the book by Nualart \cite{Nualart}.
We denote $H_d:=L^2([0,T];\RR^d)$.
and use this space to define the Malliavin derivative.  \begin{definition}[Malliavin Derivative]
        Let $f \in \C_p^{\infty}(\RR^{n};\RR)$, for some $n \in \mathbb{N}$,
        $h_1,\ldots,h_n \in H_d$ and $F: \Omega
        \to\mathbb{R}$ be the functional given by:
        \begin{equation}\label{smooth}
        F(\omega) = f\left(\int_0^T h_1(t)\cdot
        dB_t(\omega),\ldots,\int_0^T h_n(t) \cdot dB_t(\omega)\right),
        \end{equation}
        where, for any $h_i=(h_{i}^1,\dots,h_{i}^d) \in H_d$ 
        \[
        \int_0^{T}h_i(t) \cdot dB_t := \sum_{j=1}^d \int_0^T
        h_{i}^j(t)\, dB^j_t. 
        \]
        Any functional of the form (\ref{smooth}) is called 
        \textit{smooth} and we denote the class of all such functionals
        by $\mathcal{S}$. Then the Malliavin derivative of $F$,
        denoted by $\D F \in L^2(\Omega ; H_d)$ is given by:
        \begin{equation}\label{MallDer}
        \D F = \sum_{i=1}^n \partial^i f\left(\int_0^{T}h_1(u) \cdot
        dB_u,\ldots,\int_0^{T}h_n(u) \cdot dB_u \right)
        h_i .
        \end{equation}
\end{definition}
\noindent We note the isometry $L^2(\Omega \times [0,T] ; \mathbb{R}^d) \simeq L^2(\Omega ; H_d)$. This allows us to identify $\D F$ with a process $\left( \D_r F\right)_{r \in [0,T]}$ taking values in $\RR^d$, which we often do. We also denote by $\left( \D^j_r F\right)_{r \in [0,T]}$, $j=1, \ldots, d,$ the components of this process.

The set of smooth functionals (random variables) 
$\mathcal{S}$ is dense in $L^p(\Omega)$, for any $p\geq 1$ and $\D$ is closable as operator from $L^p(\Omega)$ to $L^p\left( \Omega; H_d \right)$.
We define  $\DD^{1,p}$ is the closure of the set $\mathcal{S}$ within $L^p(\Omega; \mathbb{R}^d)$
with respect to the
norm:
\[
\|F\|_{\mathbb{D}^{1,p}} = \left(\EE \left|F \right|^p + \EE \left\| \D F \right\|^p_{H_d
}\right)^{\frac{1}{p}}.
\]
The higher order Malliavin derivatives are  defined in a similar manner. For smooth random variables, we denote the iterated derivative by
$\D^{(k)} F$, $k\ge 2$, which is a random variable with values in $H_d^{\otimes k}$
defined as
\[
\D^{(k)} F := \sum_{i_1,\ldots,i_k = 1}^n
\partial^{(i_1,\ldots,i_k)}f\left(\int_0^{\infty}h_1(u) \cdot
dB_u,\ldots,\int_0^{\infty}h_n(u) \cdot dB_u \right)h_{i_1} \otimes
\ldots \otimes h_{i_k}.
\]
The above expression for  
$\D^{(k)} F$ coincides  with that obtained by iteratively applying the
Malliavin derivative.
In an analogous way, one can
close the operator $\D^{(k)}$ from $L^p(\Omega)$ to $L^p(\Omega ;
H_d^{\otimes k})$. So, for any $p \geq 1$ and natural $k \geq 1$,
we define $\mathbb{D}^{k,p}$ to be the closure of $\mathcal{S}$ with respect to the
norm:
\[
\|F\|_{\mathbb{D}^{k,p}} := \left( \mathbb{E}|F|^p + \sum_{j=1}^k \mathbb{E} \left\| \D^{(j)}
F \right\|^p_{H_d^{\otimes j}} \right)^{1/p}.
\]



Moreover, there is nothing which restricts consideration to $\mathbb{R}^d$-valued
random variables. Indeed, one can consider more general
Hilbert space-valued random variables, and the theory would extend
in an appropriate way. To this end, denote $\mathbb{D}^{k,p}(E)$ to be the
appropriate space of $E$-valued random variables, where $E$ is some
separable Hilbert space. For more details, see
\cite{Nualart}, where also the proof of the following chain rule formula can be found: 

\begin{proposition}[Chain Rule for the Malliavin Derivative]
        
        If $\varphi : \mathbb{R}^m \rightarrow \mathbb{R}$ is a continuously
        differentiable function with bounded partial derivatives, and $F =
        (F_1, \ldots, F_m)$ is a random vector with components belonging to
        $\mathbb{D}^{1,p}$ for some $p\geq 1$. Then $\varphi(F) \in
        \mathbb{D}^{1,p}$, with
        \[
        \D\varphi(F) =  \nabla \varphi(F) \D F = \sum_{i=1}^m
        \partial^i \varphi(F) \, \D F_i,
        \]
        where $\nabla \varphi$ is the row vector 
        $(\partial^1 \varphi, \ldots, \partial^m
        \varphi)$ and $DF$ is the matrix 
        $(\D^j F_i)_{1 \leq i \leq m,1 \leq j \leq d}$.
\end{proposition}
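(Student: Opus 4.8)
The plan is to establish the formula first on the dense class $\cS$ of smooth functionals, then to transfer it to all of $\mathbb{D}^{1,p}$ by approximation using the closability of $\D$, and finally to relax the regularity of $\varphi$ by mollification.

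First I would treat the case where $\varphi$ is smooth with bounded derivatives of all orders and each $F_i \in \cS$. Then $\varphi(F_1,\ldots,F_m)$ is again a smooth functional in $\cS$ (the composition of a $\C^\infty_b$ map with the $\C^\infty_p$ maps defining the $F_i$ has derivatives of polynomial growth), and the identity $\D\varphi(F)=\sum_{i=1}^m\partial^i\varphi(F)\,\D F_i$ follows at once from the defining expression \eqref{MallDer} for $\D$ on $\cS$ combined with the ordinary chain rule for $\varphi$ composed with the finite-dimensional maps $\int_0^T h_j\cdot dB$.

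Next, keeping $\varphi$ smooth with bounded derivatives but now taking arbitrary $F_i\in\mathbb{D}^{1,p}$, I would pass from $\cS$ to $\mathbb{D}^{1,p}$. By definition of $\mathbb{D}^{1,p}$ as the closure of $\cS$, choose $F_i^{(k)}\in\cS$ with $F_i^{(k)}\to F_i$ in $\mathbb{D}^{1,p}$, so that $F^{(k)}\to F$ in $L^p(\Omega;\RR^m)$ and $\D F_i^{(k)}\to \D F_i$ in $L^p(\Omega;H_d)$. Since $\nabla\varphi$ is bounded, $\varphi$ is globally Lipschitz and hence $\varphi(F^{(k)})\to\varphi(F)$ in $L^p(\Omega)$. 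For the derivatives I would write $\D\varphi(F^{(k)})=\sum_i\partial^i\varphi(F^{(k)})\,\D F_i^{(k)}$ and show this converges to $\sum_i\partial^i\varphi(F)\,\D F_i$ in $L^p(\Omega;H_d)$: the factors $\partial^i\varphi(F^{(k)})$ are uniformly bounded and, by continuity of $\partial^i\varphi$ together with $F^{(k)}\to F$ in probability, converge to $\partial^i\varphi(F)$ in probability, while $\D F_i^{(k)}\to\D F_i$ in $L^p$; a dominated-convergence argument for the products then gives the claim. Closability of $\D$ now yields $\varphi(F)\in\mathbb{D}^{1,p}$ together with the stated formula.

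Finally I would remove the smoothness of $\varphi$. Given $\varphi\in\C^1$ with bounded partial derivatives, mollify to obtain $\varphi_\epsilon=\varphi*\rho_\epsilon$, which for each fixed $\epsilon>0$ is smooth with bounded derivatives of all orders, satisfies $\sup_{\epsilon}\|\nabla\varphi_\epsilon\|_\infty\le\|\nabla\varphi\|_\infty$, and obeys $\varphi_\epsilon\to\varphi$ and $\partial^i\varphi_\epsilon\to\partial^i\varphi$ uniformly on compacts as $\epsilon\to0$. Applying the previous step to each $\varphi_\epsilon$ and then letting $\epsilon\to0$ — invoking closability of $\D$ once more, with the uniform gradient bound supplying the domination for the product terms — completes the argument. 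The main obstacle throughout is the limit passage in the derivative term, where one factor converges only in probability; the uniform bound on the partial derivatives of $\varphi$, respectively of $\varphi_\epsilon$, is exactly what controls this product and explains why the boundedness hypothesis is essential.
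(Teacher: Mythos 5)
Your proof is correct. The paper does not prove this proposition itself --- it is stated as a quoted result with the proof deferred to Nualart's book --- and your three-step argument (exact identity on the class $\mathcal{S}$ of smooth functionals via the finite-dimensional chain rule, extension to $\mathbb{D}^{1,p}$ by density and closability of $\D$, then removal of the smoothness of $\varphi$ by mollification with the uniform gradient bound supplying the domination) is essentially the standard proof found in that reference.
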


\begin{lemma}[The Malliavin derivative and integration]\label{DiffInts}
        Consider an $\mathbb{F}$-adapted process
        $f:[0,T] \times \Omega \to \RR^{d}$, and suppose that for each
        $t \in [0,T]$ and $i \in \{0 \ldots, d\}$, we have $f_i(t) \in \mathbb{D}^{1,2}$. Moreover, suppose that:
        \begin{equation}\label{intcriteria}
        \mathbb{E} \int_0^T | f(t) |^2 \, dt < \infty   \quad\quad \mathbb{E} \int_0^T \|\D f(t)\|^2_{ H_d} \, dt < \infty.
        \end{equation}
        Then $F_t := \sum_{i=1}^d \int_0^t f_i(s) dB^i_s \in \mathbb{D}^{1,2}$, with
        \begin{align*}
                \D_r F_t = \left\{ f(r) + \sum_{i=1}^d \int_r^t \D_r f_i(s) \, dB^i_s \right\} \mathbf{1}_{ \{0 \leq r \leq t \} }.
        \end{align*}
        Similarly, for any $i \in \{1, \ldots, d \} $, $G^i_t:= \int_0^t f_i(s) ds$ is an element of  $\, \mathbb{D}^{1,2}$, with
        \begin{align*}
                \D_r G^i_t =  \left\{ \int_r^t \D_r f_i(s) \, ds \right\} \mathbf{1}_{ \{0 \leq r \leq t \} }.
        \end{align*}
\end{lemma}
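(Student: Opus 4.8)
The plan is to establish the differentiation formula first for elementary (step) integrands and then pass to a general integrand by the closability of $\D$, following the classical scheme in Nualart \cite{Nualart}.

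First I would treat elementary processes of the form $f_i(s) = \sum_{k=0}^{m-1} F^i_k \mathbf{1}_{(t_k, t_{k+1}]}(s)$, where $0 = t_0 < \cdots < t_m = T$ and each $F^i_k$ is $\F_{t_k}$-measurable and lies in $\DD^{1,2}$ (indeed one may first take the $F^i_k$ smooth, so that the product rule is unproblematic, and reach general $\DD^{1,2}$ coefficients by density). For such an integrand the stochastic integral is the finite sum $F_t = \sum_{i=1}^d \sum_k F^i_k (B^i_{t_{k+1}\wedge t} - B^i_{t_k \wedge t})$, so the Leibniz rule for $\D$ applies termwise. Two elementary facts then produce the claimed expression: first, $\D^j_r B^i_s = \delta_{ij}\,\mathbf{1}_{[0,s]}(r)$; second, since each $F^i_k$ is $\F_{t_k}$-measurable (and the $\mathcal{G}$-measurable contributions carry no Malliavin derivative, the calculus being taken with respect to $B$), one has $\D_r F^i_k = 0$ for $r > t_k$. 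The first fact yields the diagonal term $f(r)\mathbf{1}_{[0,t]}(r)$, while the adaptedness fact collapses the remaining sum $\sum_k (\D_r F^i_k)(B^i_{t_{k+1}\wedge t}-B^i_{t_k\wedge t})$ exactly into $\int_r^t \D_r f_i(s)\,dB^i_s$.

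Next I would approximate a general integrand $f$ obeying \eqref{intcriteria} by elementary processes $f^n$ chosen so that \emph{simultaneously} $f^n \to f$ in $L^2(\Omega \times [0,T];\RR^d)$ and $\D f^n \to \D f$ in $L^2(\Omega\times[0,T];H_d)$. This is where the second condition $\EE\int_0^T \|\D f(t)\|^2_{H_d}\,dt < \infty$ is used: viewing $f$ as a $\DD^{1,2}$-valued process, it lets one approximate $f$ in the graph norm of $\D$ by adapted step processes (built from subinterval averages of $f$, which remain adapted and inherit membership in $\DD^{1,2}$). By the It\^o isometry, $F^n_t \to F_t$ in $L^2(\Omega)$; and the two convergences, combined with the isometry applied to the inner integral $\int_r^t \D_r f_i(s)\,dB^i_s$, show that the derivatives $\D F^n_t$ computed in the elementary case converge in $L^2(\Omega;H_d)$ to the candidate $r\mapsto \big(f(r) + \sum_{i=1}^d \int_r^t \D_r f_i(s)\,dB^i_s\big)\mathbf{1}_{[0,t]}(r)$. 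Since $\D$ is closable, this simultaneously yields $F_t \in \DD^{1,2}$ and the stated formula. The Lebesgue-integral claim for $G^i_t = \int_0^t f_i(s)\,ds$ is handled by the same scheme but is strictly easier: there is no diagonal contribution, so only $\int_r^t \D_r f_i(s)\,ds$ survives, the convergence of the approximating Riemann sums being controlled directly through Cauchy--Schwarz and Fubini, after which closability of $\D$ again delivers both $G^i_t\in\DD^{1,2}$ and its derivative.

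I expect the main obstacle to be precisely this approximation step: producing elementary integrands whose Malliavin derivatives converge to $\D f$ in $L^2(\Omega\times[0,T];H_d)$, not merely integrands converging to $f$ in $L^2$. The bookkeeping in the elementary case (the indicator $\mathbf{1}_{[0,t]}$ and the vanishing of $\D_r F^i_k$ for $r>t_k$) is routine, and the passage to the limit is immediate once closability is invoked; the genuine content is that \eqref{intcriteria} supplies convergence in the stronger graph norm of $\D$.
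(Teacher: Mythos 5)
Your proposal is correct, and it coincides with the proof the paper relies on: the paper simply cites Nualart \cite{Nualart} (Proposition 1.3.8), and your argument --- product rule for elementary adapted integrands with $\F_{t_k}$-measurable coefficients, followed by approximation in the graph norm of $\D$ and closability --- is exactly the standard proof given there. You also correctly flag the one genuinely delicate point, namely that the approximating step processes (built from averages over \emph{preceding} subintervals, to preserve adaptedness) must converge together with their Malliavin derivatives, which is precisely what hypothesis \eqref{intcriteria} provides.
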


\begin{proof} See Nualart \cite[Proposition 1.3.8]{Nualart} for details. \end{proof}

The divergence operator - which is the adjoint of the Malliavin derivative - plays a vital role in the construction of our integration by parts
formula. This operator is also called the Skorohod integral.
It coincides with a generalisation of the It\^{o} integral to
anticipating integrands. A detailed discussion of the divergence operator can be found in Nualart \cite{Nualart}.
\begin{definition}[Divergence operator]\label{divergenceoperator}
        Denote by $\delta$ the adjoint of the operator $\D$. That is,
        $\delta$ is an unbounded operator on $L^2(\Omega \times
        [0,T];\mathbb{R}^d)$ with values in $L^2(\Omega;\RR)$ such that:
        \begin{enumerate}
                \item Dom $\delta$  $= \{u \in L^2(\Omega \times
                [0,T];\mathbb{R}^d); |\mathbb{E}(\ip{\D F}{u}_{H_d})| \leq c
                \|F\|_{L^2(\Omega)},\ \  \forall F \in \mathbb{D}^{1,2}\}$.
                
                \item For every $u \in \textrm{Dom } \delta$, then $\delta(u) \in
                L^2(\Omega)$ satisfies:
                \[
                \mathbb{E}(F \delta(u)) = \mathbb{E}(\ip{\D F}{u}_{H_d}).
                \]
        \end{enumerate}
\end{definition}
%
%
%
\begin{remark} \label{adjointito}If $u =(u^1,...,u^d)\in \textrm{Dom } \delta$ is $\mathbb{F}$-adapted, then the adjoint $\delta(u)$, is nothing more than the It\^{o} integral of $u$ with respect to the d-dimensional Brownian motion $B_t = (B_t^1,\ldots,B_t^d)$. i.e.
        \[
        \delta(u)= \sum_{i=1}^d \int_0^T u^i( s) \, dB^i_s.
        \]
\end{remark}

\subsection{Proofs from Section \ref{sec:regsoln} }
\label{sec:KSpfs}

The first goal of this section is to prove Theorem \ref{th:XisKSP}. Since each type  of derivative (w.r.t. $x$, $\mu$ or $v$) of $\Xd_t$ satisfies a linear equation, we will introduce a general linear equation and, first, derive some a priori $L^p$ estimates on the solution. Then, we will show this linear equation is again differentiable under certain assumptions on the coefficients.
In the following, we consider an equation with coefficients  $a_1, a_2, a_3$, which depend on $(t,x,[\theta],\bv) \in [0,T] \times \RR^N \times \p_2(\RR^N) \times (\RR^N)^{\# \bv} $ with initial condition given by a constant value $a_0$ \footnote{When applying Lemma \ref{lem:linearestimate} to control the derivatives of $\Xd_t$, $a_0$ will be either 1 in the case of the $\partial_{x_i}\Xd_t$ or $0$ in all other cases.} Below, we denote $v_r$ as one element of the tuple $\bv=(v_1, \ldots, v_{\# \bv})$.

\begin{lemma}
        \label{lem:linearestimate}
        Let $Y^{x, [\theta]}(\bv)$ solve the following SDE
        \begin{align}
                \label{eq:LinearY}
                \begin{split}
                        Y^{x, [\theta]}_t(\bv) = a_0 + \sum_{i=0}^d \int_0^t \bigg\{ & a^i_1(s,x,[\theta]) \, Y^{x, [\theta]}_s(\bv) + a^i_2(s,x,[\theta],\bv) \\
                        &  + \widetilde{\EE} \left[  a^i_3(s,x,[\theta],\t \theta) \, \widetilde{Y}^{\t \theta, [\theta]}_s(\bv) + \sum_{r=1}^{\# \bv}  a^i_3(s,x,[\theta],v_r) \, \widetilde{Y}^{v_r, [\theta]}_s(\bv) \right] \bigg\} \, dB^i_s,
                \end{split}
        \end{align}
        where, for all $i =1 , \ldots, d$, the coefficients $(t,x,[\theta],\bv) \mapsto a_k(t,x,[\theta],\bv)$ are continuous in $L^p(\Omega)$ $\forall p \geq 1$, $k=1,2,3$, and
        \begin{align*}
                & a_0 \in \RR^N,\footnote{when Lemma \ref{lem:linearestimate} to control the derivatives of $\Xd_t$, $a_0$ will be either 1 in the case of the $\partial{x_i}\Xd_t$ and $0$ in all other cases.}\\
                 & a^i_1  : \Omega \times [0,T] \times \RR^N \times \p_2(\RR^N) \to \RR^{N \times N}\\
                & a_2^i : \Omega \times [0,T] \times \RR^N \times \p_2(\RR^N) \times (\RR^N)^{\# \bv} \to \RR^N, \\
                & a_3^i   : \widetilde{\Omega} \times \Omega \times [0,T] \times \RR^N \times \p_2(\RR^N) \times \RR^N \to \RR^{N \times N}.
        \end{align*}
In \eqref{eq:LinearY}, $\widetilde{Y}^{\t \theta, [\theta]}$ is a copy of $Y^{x, [\theta]}$ on the probability space $(\t \Omega, \t \F, \t \PP)$ driven by the Brownian motion $\t B$ and with $x=\t \theta.$ Similarly,  $\widetilde{Y}^{v_r, [\theta]}$ is a copy of $Y^{x, [\theta]}$ on the probability space $(\t \Omega, \t \F, \t \PP)$ driven by the Brownian motion $\t B$ and with $x=v.$
        If we make the following boundedness assumptions
        \begin{enumerate}
                \item $\sup_{x \in \RR^N, [\theta] \in \p_2(\RR^N), \bv \in (\RR^N)^{\# \bv}}   \|a_2(\cdot,x,[\theta],\bv) \|_{\cS^p_T} < \infty$,
                \item $a_1$ and $a_3$ are uniformly bounded,
                \item $\sup_{x \in \RR^N, [\theta] \in \p_2(\RR^N), \bv \in (\RR^N)^{\# \bv}}     \|a_2(\cdot,\theta,[\theta],\bv) \|_{\cS^2_T} < \infty$,
        \end{enumerate}
        then we have the following estimate for $C=C(p,T,a_1,a_3)$
        \begin{align}
                \label{eq:linbd}
                \begin{split}
                        \left\|Y^{x,[\theta]} (\bv)\right\|_{\cS^p_T} \leq & C \left( \left|a_0\right|+  \, \left\| a_2 (\cdot,x,[\theta],\bv) \right\|_{\cS^p_T}   +  \, \left\| a_2(\cdot,\theta,[\theta],\bv) \right\|_{\cS^2_T} \right)   .
                \end{split}
        \end{align}
        Moreover, we also get that the mapping
        \[
        [0,T] \times \RR^N \times \p_2(\RR^N) \times (\RR^N)^{\# \bv} \ni (t,x,[\theta],v) \mapsto Y^{x, [\theta]}_t(\bv) \in L^p(\Omega)
        \]
        is continuous.
        
\end{lemma}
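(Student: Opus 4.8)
The plan is to combine a Burkholder--Davis--Gundy (BDG) estimate with Gronwall's lemma, but the obstruction is that \eqref{eq:LinearY} is \emph{not} a standard linear SDE: the mean-field feedback $\widetilde{\EE}[a_3^i(s,x,[\theta],\t\theta)\,\widetilde{Y}^{\t\theta,[\theta]}_s(\bv)]$ couples the solution started from the deterministic point $x$ to the solution started from the \emph{random} initial condition $\theta$ and to the solutions started from the finitely many fixed points $v_1,\dots,v_{\#\bv}$ making up $\bv$. Closing this coupling is the heart of the proof. First I would record, using $|a_1|,|a_3|\le C$ (assumption 2), the Cauchy--Schwarz bounds
\[
\Bigl|\widetilde{\EE}\bigl[a_3^i(s,x,[\theta],\t\theta)\,\widetilde{Y}^{\t\theta,[\theta]}_s(\bv)\bigr]\Bigr|\le C\bigl(\widetilde{\EE}|\widetilde{Y}^{\t\theta,[\theta]}_s(\bv)|^2\bigr)^{1/2},\qquad \Bigl|\widetilde{\EE}\bigl[a_3^i\,\widetilde{Y}^{v_r,[\theta]}_s(\bv)\bigr]\Bigr|\le C\,\|Y^{v_r,[\theta]}_s(\bv)\|_2,
\]
so that the whole feedback is controlled by the $\cS^2$-sizes of the solutions started from $\theta$ and from the $v_r$, which are deterministic functions of $s$ once the law $[\theta]$ and tuple $\bv$ are fixed.

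Step one is a self-consistent $\cS^2$ bound on the closed finite family of initial conditions $\{\theta,v_1,\dots,v_{\#\bv}\}$. Applying BDG and Jensen to \eqref{eq:LinearY} for each such initial datum $z$ (splitting the $i=0$ integral as a drift and the $i=1,\dots,d$ integrals as martingales), and writing $M^{(z)}(t):=\EE\sup_{s\le t}|Y^{z,[\theta]}_s(\bv)|^2$, the boundedness of $a_1,a_3$ together with the feedback bounds above yields a closed linear system of integral inequalities
\[
M^{(z)}(t)\le C|a_0|^2 + C\,T\,\|a_2(\cdot,z,[\theta],\bv)\|_{\cS^2_T}^2 + C\int_0^t\Bigl(M^{(\theta)}(s)+\textstyle\sum_r M^{(v_r)}(s)\Bigr)\,ds ,
\]
whose inhomogeneous terms are finite by assumptions 1 and 3. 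A vector-valued Gronwall inequality then bounds each $M^{(\theta)}(t)$ and $M^{(v_r)}(t)$, hence $\widetilde{\EE}|\widetilde{Y}^{\t\theta}_s(\bv)|^2$ and $\|Y^{v_r}_s(\bv)\|_2^2$; the uniform $\cS^p$ control of $a_2$ from assumption 1 is exactly what makes this finite system solvable with the claimed dependence of the constant.

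Step two is the $\cS^p$ bound for the given $x$. With the feedback now dominated by a controlled deterministic function of $s$, \eqref{eq:LinearY} for $Y^{x,[\theta]}$ is an honest linear SDE with forcing $a_2(\cdot,x,[\theta],\bv)$ plus this feedback, and a second BDG-plus-Gronwall argument in $\cS^p_T$ gives \eqref{eq:linbd}: the term $\|a_2(\cdot,x,[\theta],\bv)\|_{\cS^p_T}$ arises from the explicit forcing at $x$, while $\|a_2(\cdot,\theta,[\theta],\bv)\|_{\cS^2_T}$ enters through the feedback controlled in Step one. For the continuity of $(t,x,[\theta],\bv)\mapsto Y^{x,[\theta]}_t(\bv)$ in $L^p(\Omega)$, I would write the difference of two solutions as the solution of a linear equation of the same form \eqref{eq:LinearY}: its homogeneous part has the same bounded coefficients $a_1,a_3$, and its inhomogeneous part consists of the differences $a_k(\cdot,x,[\theta],\bv)-a_k(\cdot,x',[\theta'],\bv')$ multiplied by the already-estimated process $Y^{x',[\theta']}(\bv')$. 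These differences tend to $0$ in $L^p$ by the assumed $L^p$-continuity of $a_1,a_2,a_3$ and the $L^p$-Lipschitz dependence of the measure flow in \eqref{eq:MKVLpreg}; applying \eqref{eq:linbd} to this difference equation then yields spatial and measure continuity, while continuity in $t$ follows from the moment bound \eqref{eq:linbd} and BDG applied to the increments of the stochastic integrals.

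The main obstacle is Step one. Because the feedback refers to the solution at the random initial condition $\theta$ and at the points $v_r$, one cannot estimate $Y^{x,[\theta]}$ in isolation; the key observation is that the mean-field coupling is self-referential precisely on the finite augmented family $\{\theta,v_1,\dots,v_{\#\bv}\}$, on which a vector Gronwall can be set up and closed. Everything after that — the second Gronwall for general $x$ and the continuity via the difference equation — is routine given the boundedness of $a_1,a_3$ and the $L^p$-continuity of the data.
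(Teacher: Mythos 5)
Your proposal is correct and follows essentially the same route as the paper's proof: a first Gronwall argument closing the $\cS^2$ bound on the self-referential family $\{\theta,v_1,\dots,v_{\#\bv}\}$ (the paper packages your vector Gronwall into the single scalar quantity $\iota(t)$), a second BDG-plus-Gronwall step in $\cS^p_T$ for the given $x$, and continuity via the linear difference equation (the paper splits the increment into separate $t$, $x$, $[\theta]$, $\bv$ pieces, but the mechanism is identical).
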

\begin{proof}
        Wherever there is no confusion, we drop the arguments $(t,x,[\theta],\bv)$ to lighten notation. We will write, for example, $a_3|_{v=\t \theta}$ to denote $a_3(s,x,[\theta],\t \theta)$.
Let $\iota,\kappa:[0,T]\mapsto [0,\infty)$ be defined as 
\begin{eqnarray*}
\iota(t)&=&\left\|Y^{\t\theta,[\theta]} (\bv)\right\|_{\cS^2_t}^2+\sum_{r=1}^{\# \bv}  \left\|Y^{v_{r},[\theta]} (\bv)\right\|_{\cS^2_t}^2,\ \ t\in [0,T]\\
\kappa(t)&=&\left\|Y^{x,[\theta]} (\bv)\right\|_{\cS^p_t}^p,\ \ t\in [0,T].\end{eqnarray*}
We deduce from \eqref{eq:LinearY} and Burkholder-Davis-Gundy inequality  that there exists a constant $C$ such that for any $t\in [0,T]$ we have   
 \[
                \iota(t) \leq   C \bigg\{ |a_0|^2 +\|a_2(\cdot,\theta,[\theta],\bv) \|_{\cS^2_t} ^{2}+  \int_0^t  (\|a_1\|_{\infty}^2+\|a_3\|_{\infty}^2)\iota(s) ds \bigg\},
        \]
        so by Gronwall's inequality,
        \begin{equation*}
        \left\|Y^{\t\theta,[\theta]} (\bv)\right\|_{\cS^2_t}^2+\sum_{r=1}^{\# \bv}  \left\|Y^{v_{r},[\theta]} (\bv)\right\|_{\cS^2_t}^2\le
 C e^{\|a_1\|_{\infty}^2+\|a_3\|_{\infty}^2)T}(|a_0|^2 +\|a_2(\cdot,\theta,[\theta],\bv) \|_{\cS^2_T}  ^{2}).
       \end{equation*}
        Then, applying the Burkholder-Davis-Gundy inequality and the above estimate to $Y^{x,[\theta]}_t(\bv)$ we deduce that 
        \[
              \kappa(t) \le\  C \left( |a_0|^p+ \|a_3\|_\infty^p \iota(T)^{p\over 2}+ \|a_2(\cdot,x,[\theta],\bv) \|_{\cS^p_T}^p+\int_0^T \|a_1\|^p_{\infty} \kappa(s)ds\right).   \]
        So applying Gronwall's inequality again and our estimate on $\iota(T)$ we get \eqref{eq:linbd}. 

Now, for a quantity $G$ depending on $(t,x,[\theta],\bv)$ we introduce the notation 
        \begin{align*}
                & \Delta_t G :=G(t,x,[\theta],\bv) - G(t',x,[\theta],\bv) \\
                &  \Delta_x G :=G(t,x,[\theta],\bv) - G(t,x',[\theta],\bv) \\
                & \Delta_{\theta} G := G(t,x,[\theta],\bv) - G(t,x,[\theta'],\bv) \\
                & \Delta_{\bv} G  := G(t,x,[\theta],\bv) - G(t,x,[\theta],\bv') .
        \end{align*}
        We can split the difference $Y^{x, [\theta]}_t(\bv) - Y^{x', [\theta']}_{t'}(\bv')$ into
        \begin{align*}
                Y^{x, [\theta]}_t(\bv) - Y^{x', [\theta']}_{t'}(\bv') = 
                 \Delta_t Y^{x, [\theta]}(\bv) + \Delta_x Y_{t'}^{\theta}(\bv) + \Delta_{\theta} Y_{t'}^{x'}(\bv) + \Delta_{\bv} Y_{t'}^{x',[\theta']},
        \end{align*}
        and consider each term individually. First, 
        \begin{align*}  
                \Delta_t Y^{x, [\theta]}(\bv) & =  \sum_{i=0}^d   \int_{t'}^t \bigg\{  a^i_1 \, Y^{x, [\theta]}_s(\bv) + a^i_2
                + \widetilde{\EE} \left[  a^i_3|_{v= \t \theta} \, \widetilde{Y}^{\t \theta, [\theta]}_s(\bv) + \sum_{r=1}^{\# \bv}  a^i_3|_{v= v_r} \, \widetilde{Y}^{v_r, [\theta]}_s(\bv) \right] \bigg\} \, dB^i_s
     .   \end{align*}
        The integrand is bounded in $L^p(\Omega)$ uniformly in time, so using the Burkholder-Davis-Gundy inequality, we get
        \[
        \left \| \Delta_t Y^{x, [\theta]}(\bv) \right \|_p  \leq C \left( |t-t'|^{\frac{1}{2}} \right).
        \] 
        Using the continuity assumption on $a_0$, we see that this goes to 0 as $t \to t'$.
        Second,
        \begin{align*}
                \Delta_x Y_t^{\theta}(\bv) = \Delta_x a_0 + \sum_{i=0}^d \int_0^t \bigg\{ & a^i_1 \Delta_x Y_s^{[\theta]}(\bv) + Y_s^{x,[\theta]}(\bv) \Delta_x a_1^i  + \Delta_x a_2^i \\
                &+  \widetilde{\EE} \left[ \Delta_x a^i_3|_{v= \t \theta} \, \widetilde{Y}^{\t \theta, [\theta]}_s(\bv) + \sum_{r=1}^{\# \bv}  \Delta_xa^i_3|_{v= v_r} \, \widetilde{Y}^{v_r, [\theta]}_s(\bv) \right] \bigg\} dB^i_s
     .   \end{align*}
        This is again a linear equation. The same argument used to obtain \eqref{eq:linbd}, except using the $L^p$-norm instead of the $\cS^p_T$-norm, gives
        \begin{align*}
                & \left \| \Delta_x Y_t^{\theta}(\bv) \right \|^p_p  \leq C \,  \sup_{s \in [0,t]} \EE \left(   Y_s^{x,[\theta]}(\bv) \Delta_x a_1^i + \Delta_x a_2^i + \widetilde{\EE} \left[  \widetilde{Y}_s^{\t \theta,[\theta]}(\bv) \left. \Delta_x a_3^i\right|_{v=\t \theta} + \sum_{r=1}^{\# \bv}  \Delta_xa^i_3|_{v=v_r} \, \widetilde{Y}^{v_r, [\theta]}_s(\bv) \right]   \right)^p.
        \end{align*}
        Then, using H\"older's inequality, the fact that $Y_s^{x,[\theta]} (\bv)$ is bounded in $L^p(\Omega)$ for all $p \geq 1$ and the continuity assumptions on $a_1, a_2, a_3$, we see that the above quantity goes to 0. The arguments for $ \Delta_{\theta} Y_{t'}^{x'}(\bv)$ and $\Delta_{\bv} Y_{t'}^{x',[\theta']}$ are almost identical.
\end{proof}

Now, we consider the differentiability of the generic process $Y^{x,[\theta]}(\bv)$ satisfying the linear equation \eqref{eq:LinearY} under appropriate assumptions.
\begin{proposition}
        \label{prop:linearderivs}
        Suppose that the process $Y^{x,[\theta]}(\bv)$ is as in Lemma \ref{lem:linearestimate}.
        In addition to the assumptions of Lemma \ref{lem:linearestimate}, 
        we introduce the following differentiability assumptions:
        \begin{enumerate}
                \item[(a)] For  $k=1,2,3$, all $(s,[\theta],\bv) \in [0,T]\times \p_2(\RR^N)\times (\RR^N)^{\# \bv}$ and each $p \geq 1$, $\RR^N \ni x \mapsto a_k(s,x,[\theta],\bv) \in L^p(\Omega)$ is differentiable.
                
                \item[(b)] For $k=1,2,3$, all $(s,[\theta],x) \in [0,T]\times \p_2(\RR^N)\times (\RR^N)^{\# \bv}$ and each $p \geq 1$, $\RR^N \ni v \mapsto a_k(s,x,[\theta],\bv) \in L^p(\Omega)$ is differentiable.
                
                
                \item[(c)] For  all $(s,x,\bv) \in [0,T] \times \RR^N\times(\RR^N)^{\# \bv}$ the mapping  $L^2(\Omega) \ni \theta \mapsto a_2(s,\theta,[\theta],\bv) \in L^2(\Omega)$ is Fr\'echet differentiable. 
                
                \item[(d)] $a_k(s,x,[\theta],\bv) \in \DD^{1, \infty}$ for $k=1,2,3$ and all $(s,x,[\theta],\bv) \in [0,T] \times \RR^N \times \p_2(\RR^N)\times \RR^N$.
                Moreover, we assume the following estimates on the Malliavin derivatives hold.
                \begin{align*}
                        & \sup_{r \in [0,T]} \EE \sup_{s\in [0,T]} | \D_r a_k(s,x,[\theta],\bv) |^p < \infty, \quad k=0,1,2,3.
                \end{align*}
        \end{enumerate}
        Then, for all $t \in [0,T]$ the following hold:
        \begin{enumerate}
                \item Under assumption (a), $x \mapsto Y^{x, [\theta]}_t(\bv)$ is differentiable in $L^p(\Omega)$ for all $p \geq 1$ and 
                \[\partial_x Y_t^{x, [\theta]}(\bv):= L^p - \lim_{h \to 0} \frac{1}{|h|}  \left(Y_t^{x+h, [\theta]}(\bv) - Y_t^{x, [\theta]}(\bv) \right)
                \]
                satisfies
                \begin{align*}
                        \partial_x Y^{x, [\theta]}_t(\bv) = \sum_{i=0}^d \int_0^t \bigg\{ & \partial_x a^i_1 \, Y^{x, [\theta]}_s(\bv) +  a^i_1 \, \partial_x Y^{x, [\theta]}_s (\bv) + \partial_x a^i_2 \\
                        & 
                        + \widetilde{\EE} \left[ \left.\partial_x a^i_3\right|_{v=\t \theta} \widetilde{Y}^{\t \theta, [\theta]}_s(\bv) + \sum_{r=1}^{\# \bv} \left.\partial_x a^i_3\right|_{v=v_r} \widetilde{Y}^{v_r, [\theta]}_s(\bv) \right] \bigg\} \, dB^i_s.
                \end{align*}
                
                \item Under assumption (b), $\bv \mapsto Y^{x, [\theta]}_t(\bv)$ is differentiable in $L^p(\Omega)$ for all $p \geq 1$ and \[
                \partial_{\bv} Y_t^{x, [\theta]}(\bv):= L^p - \lim_{h \to 0} \frac{1}{|h|}  \left(Y_t^{x, [\theta]}(\bv+h) - Y_t^{x, [\theta]}(\bv)\right)
                \]
                satisfies
                \begin{align*}
                        \partial_{v_j}  Y^{x, [\theta]}_t(\bv) =  \sum_{i=0}^d  \int_0^t \bigg\{  & a^i_1 \, \partial_{v_j} Y^{x, [\theta]}_s(\bv) 
                        + \partial_{v_j} a^i_2 
                        + \widetilde{\EE} \left[  \left. \partial_v a^i_3\right|_{v=v_j}  \widetilde{Y}^{v_j, [\theta]}_s (\bv) \right] \\
                        &  + \widetilde{\EE} \left[ \left. a^i_3\right|_{v=v_j} \partial_x \widetilde{Y}^{v_j, [\theta]}_s (\bv) +  \left.  a^i_3\right|_{v=\t \theta} \partial_{v_j} \widetilde{Y}^{\t \theta, [\theta]}_s (\bv) +\sum_{r=1}^{\# \bv}\left. a^i_3\right|_{v=v_r} \partial_{v_j} \widetilde{Y}^{v_r, [\theta]}_s (\bv) \right] \bigg\} \, dB^i_s.
                \end{align*}
                
                
                \item Under assumption (a), (b) and (c), the maps $\theta \mapsto Y_t^{\theta,[\theta]}(\bv)$ and $\theta \mapsto Y^{x, [\theta]}_t(\bv)$ are Fr\'echet differentiable for all $(x,\bv) \in \RR^N \times (\RR^N)^{\# \bv}$, so $\partial_{\mu}Y^{x,[\theta]}_t(\bv)$ exists and it satisfies
                \begin{align*}
                        \partial_{\mu}  Y^{x, [\theta]}_t (\bv,v') =  \sum_{i=0}^d  \int_0^t \bigg\{ & \partial_{\mu} a^i_1 \: Y^{x, [\theta]}_s(\bv) +  a^i_1 \, \partial_{\mu} Y^{x, [\theta]}_s (\bv,v') 
                        + \partial_{\mu} a^i_2  \\
                        & + \widetilde{\EE} \left[ \partial_{\mu} a^i_3  \: \widetilde{Y}^{\t \theta, [\theta]}_s(\bv) + \partial_{v} a^i_3  \: \widetilde{Y}^{v', [\theta]}_s(\bv) + \left. a^i_3 \right|_{v=\t \theta} \: \partial_{\mu} \widetilde{Y}^{\t \theta, [\theta]}_s (\bv,v') \right]\\
                        & +   \widetilde{\EE} \left[  \left. a^i_3 \right|_{v=v'} \: \partial_{x} \widetilde{Y}^{v', [\theta]}_s (\bv)  + \sum_{r=1}^{\# \bv}   \left. a^i_3 \right|_{v=v_r} \: \partial_{\mu} \widetilde{Y}^{v_r, [\theta]}_s (\bv,v') \right] \bigg\} \, dB^i_s.
                \end{align*}
                Moreover,  we have the representation, for all $\gamma \in L^2(\Omega)$,
                \[
                D \left( Y_t^{\theta,[\theta]}(\bv)\right) (\gamma)= \left. \left(\partial_xY_t^{x,[\theta]}(\bv) \gamma + \widehat{\EE} \left[\partial_{\mu}Y_t^{x,[\theta]}(\bv,\widehat{\theta}) \, \widehat{\gamma} \right]\right) \right|_{x=\theta}.
                \]

                \item Under assumption (e),
                $Y^{x, [\theta]}_t \in \DD^{1, \infty}$ and $\D_r Y^{x, [\theta]}$ satisfies 
                \begin{align}
                        \label{eq:malDerivLinear} 
                        \begin{split}
                                \D_r Y^{x, [\theta]}_t(\bv)  &= \left(a^j_1 \, Y^{x, [\theta]}_r(v) + a^j_2  + \widetilde{\EE} \left[ a^j_3 \widetilde{Y}^{x, [\theta]}_r(\bv) \right]\right)_{j=1, \ldots, d} \\
                                & + \sum_{i=0}^d \int_r^t \bigg\{  \D_r a^i_1 \, Y^{x, [\theta]}_s(\bv) +  a^i_1 \, \D_r Y^{x, [\theta]}_s(\bv) 
                                + \D_r a^i_2   + \widetilde{\EE} \left[ \D_r a^i_3|_{v=\t \theta} \widetilde{Y}^{x, [\theta]}_s(\bv) \right] \bigg\} \, dB^i_s.
                        \end{split}
                \end{align}
                Moreover, the following bound holds:
                \begin{align}
                        \label{eq:linearMalbd}
                        \begin{split}
                                \sup_{r \leq t} \EE \left[ \sup_{r \leq t \leq T} \left| \D_r Y_t^{x, [\theta]}(\bv) \right|^p \right]
                                \leq C \, & \sup_{r \leq t} \EE \left[ \sup_{r \leq t \leq T}  \left| \D_r a_1\right|^p  \right].
                        \end{split}
                \end{align}
                
        \end{enumerate}
\end{proposition}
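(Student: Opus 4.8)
The plan is to handle all four parts by a single two-step template, exploiting that differentiating the linear equation \eqref{eq:LinearY} in any parameter again yields a linear equation of the same structure. In each case I would (i) observe that the \emph{candidate} derivative process---obtained by formally differentiating \eqref{eq:LinearY}---solves a linear SDE that again fits the template of Lemma \ref{lem:linearestimate}, so that existence, uniqueness and the a priori bound \eqref{eq:linbd} are available; and (ii) prove that the relevant difference quotient converges in $\cS^p_T$ to this candidate. The boundedness and $L^p$-continuity hypotheses needed to invoke Lemma \ref{lem:linearestimate} for the candidate equations are inherited from the assumptions (a)--(d): the new inhomogeneous term is built from the derivatives of $a_1^i,a_2^i,a_3^i$ tested against the already-$L^p$-bounded processes $Y$, $\widetilde Y^{\t\theta,[\theta]}$, $\widetilde Y^{v_r,[\theta]}$, while the homogeneous coefficient $a_1^i$ (and, where the copy terms are self-referential, $a_3^i$) is unchanged.

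For part 1, I would let $\partial_x Y$ be the solution of the stated candidate equation and set $R^h_t := h^{-1}\bigl(Y^{x+h,[\theta]}_t(\bv)-Y^{x,[\theta]}_t(\bv)\bigr)-\partial_x Y^{x,[\theta]}_t(\bv)$. Subtracting the equations for $Y^{x+h}$, $Y^{x}$ and $\partial_x Y$, one sees that $R^h$ solves a linear SDE of the form \eqref{eq:LinearY} whose source term is a finite sum of quantities $h^{-1}(\Delta_x a_k)-\partial_x a_k$ paired with $L^p$-bounded processes; by assumption (a) these sources tend to $0$ in $L^p$, and \eqref{eq:linbd} then forces $\|R^h\|_{\cS^p_T}\to 0$. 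Here the copy terms carry only $\widetilde Y$ (not its $x$-derivative), since the initial conditions $\t\theta, v_r$ of the copies do not depend on the $x$ being differentiated, so the candidate equation is in fact a standard linear SDE. Part 2 is identical in spirit, with two bookkeeping points: $\partial_{v_j}Y$ now enters the copy terms as well, so the candidate is genuinely self-referential and fits Lemma \ref{lem:linearestimate} with $a_3^i$ retained; and differentiating the copy $\widetilde Y^{v_j,[\theta]}$ in its initial condition $v_j$ produces $\partial_x\widetilde Y^{v_j,[\theta]}$, the process already constructed in part 1, which is why that term appears.

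Part 3 is the main obstacle and follows the scheme of Proposition \ref{prop:firstderivs}(b). First I would establish Fr\'echet differentiability of the lifted maps $\theta\mapsto Y^{\theta,[\theta]}_t(\bv)$ and $\theta\mapsto Y^{x,[\theta]}_t(\bv)$ by differentiating \eqref{eq:LinearY} in a direction $\gamma$ using (a)--(c), obtaining a linear SDE for $D\bigl(Y^{\theta,[\theta]}_t\bigr)(\gamma)$; and I would check via Lemma \ref{lem:linearestimate} that the candidate equation for $\partial_\mu Y$ is well-posed. The crux is then to verify, exactly as in \eqref{eq:productU}--\eqref{eq:productU2}, that the combination $\partial_x Y^{\theta,[\theta]}_t(\bv)\,\gamma+\widehat{\EE}\bigl[\partial_\mu Y^{x,[\theta]}_t(\bv,\widehat\theta)\,\widehat\gamma\bigr]\big|_{x=\theta}$ satisfies the \emph{same} linear SDE as $D\bigl(Y^{\theta,[\theta]}_t\bigr)(\gamma)$; uniqueness then yields the representation formula and, through the characterisation of the Lions derivative recalled in Section \ref{sec:diff}, the existence of $\partial_\mu Y$ solving the displayed equation. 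The delicate points, as in Proposition \ref{prop:firstderivs}, are pulling $\widehat\gamma$ inside the It\^o integral and interchanging $\widehat{\EE}$ with the stochastic integral; both are legitimate because $(\widehat\theta,\widehat\gamma)$ live on a probability space independent of $B$, the interchange being justified by the stochastic Fubini theorem \cite[Theorem 65]{protter}.

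Finally, for part 4 I would mimic Proposition \ref{prop:firstderivs}(c): approximate $Y^{x,[\theta]}$ by its Picard iterates $Y^n$, each solving a linear SDE whose coefficients lie in $\DD^{1,\infty}$ by the Malliavin hypothesis (d), so that $Y^n_t\in\DD^{1,\infty}$ by Lemma \ref{DiffInts}. The Malliavin derivative $\D_r Y^n$ then satisfies the equation obtained by differentiating \eqref{eq:LinearY}, namely \eqref{eq:malDerivLinear}, and a Gronwall argument identical to the one in Lemma \ref{lem:linearestimate}, applied to $\D_r Y^n$, gives the bound \eqref{eq:linearMalbd} uniformly in $n$. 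Since $\|Y^n_t-Y^{x,[\theta]}_t\|_p\to 0$ and $\sup_n\|Y^n_t\|_{\DD^{1,\infty}}<\infty$, the closability of $\D$ (Nualart \cite[Lemma 1.5.3]{Nualart}) yields $Y^{x,[\theta]}_t\in\DD^{1,\infty}$ with $\D_r Y^{x,[\theta]}$ solving \eqref{eq:malDerivLinear}, and the estimate \eqref{eq:linearMalbd} passes to the limit. I expect the verification of the two coinciding SDEs in part 3, with the attendant handling of the $\t{}$ and $\widehat{}$ probability spaces, to be the only genuinely non-mechanical step.
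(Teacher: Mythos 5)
Your proposal is correct and follows essentially the same route as the paper: the paper dispatches parts 1 and 2 as ``standard results'' on parameter-differentiability of SDEs (for which your remainder/difference-quotient argument via Lemma \ref{lem:linearestimate} is exactly the standard proof), handles part 3 by showing that $\partial_x Y^{\theta,[\theta]}_t(\bv)\,\gamma+\widehat{\EE}\bigl[\partial_\mu Y^{x,[\theta]}_t(\bv,\widehat\theta)\,\widehat\gamma\bigr]\big|_{x=\theta}$ satisfies the same linear SDE as the Fr\'echet derivative and concluding by uniqueness, just as you do, and handles part 4 by noting the equation fits the standard Malliavin-differentiability framework (the copy term inside $\widetilde{\EE}$ does not depend on $\omega\in\Omega$), which is precisely what your Picard-iteration-plus-closability argument establishes. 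Your identification of part 3 as the only non-mechanical step, and of the copy-term bookkeeping in parts 1 and 2, matches the paper's treatment.
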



\begin{proof} Parts 1. and 2. are standard results on differentiability of SDEs with respect to a real parameter.
        \begin{enumerate}
                
                \item[3.]
                The arguments to show that the maps $\theta \mapsto Y_t^{\theta,[\theta]}(\bv)$ and $\theta \mapsto Y^{x, [\theta]}_t(\bv)$ are Fr\'echet differentiable are essentially the same as those from Proposition \ref{prop:firstderivs} showing that $\theta \mapsto X^{\theta,[\theta]}(\bv)$ and $\theta \mapsto X^{x, [\theta]}_t(\bv)$ are Fr\'echet differentiable, so we omit them.
                
                Once we know these derivatives exist, it is fairly straightforward to see that they satisfy the equations
                \begin{align}
                        \begin{split}
                                D (Y_t^{\theta,[\theta]}(\bv))(\gamma) = \int_0^t & \bigg\{ D a_1(\gamma)|_{x=\theta} \:Y^{\theta, [\theta]}_s(\bv) +\partial_x a_1|_{x=\theta} \, \gamma \, Y^{\theta, [\theta]}_s(\bv)  
                                + a_1|_{x=\theta} D (Y_s^{\theta,[\theta]}(\bv))(\gamma) 
                                +  D a_2(\gamma)|_{x=\theta} \\ 
                                &+ \partial_x a_2|_{x=\theta} \, \gamma 
                                + \widetilde{\EE} \left[ \partial_v a_3|_{x=\theta,v=\t \theta} \,  \widetilde{\gamma} \, \widetilde{Y}^{\t \theta, [\theta]}_s(\bv) \right] + \widetilde{\EE} \left[ \partial_x a_3|_{x=\theta,v=\t \theta} \,  \widetilde{\gamma} \, \widetilde{Y}^{\t \theta, [\theta]}_s(\bv) \right] \\
                                & + \widetilde{\EE} \left[ D a_3(\gamma)|_{x=\theta,v=\t \theta} \, \widetilde{Y}^{\t \theta, [\theta]}_s(\bv) + a_3|_{x=\theta,v=\t \theta} \: D (\widetilde{Y}_s^{\t \theta,[\theta]}(\bv))(\gamma) \right]\\
                                & + \sum_{r=1}^{\# \bv} \widetilde{\EE} \left[  \left( D a^i_3(\gamma)|_{x=\theta,v=v_r} \,  + \partial_x a^i_3|_{x=\theta,v=v_r} \, \gamma \right) \widetilde{Y}^{v_r, [\theta]}_s(\bv) \right] \\
                                & + \widetilde{\EE} \left[  a^i_3(\gamma)|_{x=\theta,v=v_r}\, D (\widetilde{Y}^{v_r, [\theta]}_s(\bv)) (  \gamma)\right]  \bigg\} \, dB_s,
                        \end{split}
                \end{align}
                and
                \begin{align}
                        \begin{split}
                                D (Y_t^{x,[\theta]}(\bv))(\gamma) = \int_0^t  \bigg\{ & D a_1(\gamma) Y^{x, [\theta]}_s(\bv) + a_1 D (Y_s^{x,[\theta]}(\bv))(\gamma) 
                                +  D a_2(\gamma) + \widetilde{\EE} \left[ \partial_v a_3|_{v=\t \theta} \, \widetilde{\gamma} \, \widetilde{Y}^{\t \theta, [\theta]}_s(\bv) \right] \\
                                & + \widetilde{\EE} \left[ D a_3(\gamma) |_{v=\t \theta} \, \widetilde{Y}^{\t \theta, [\theta]}_s(\bv) + a_3|_{v=\t \theta} D (\widetilde{Y}_s^{\t \theta,[\theta]}(\bv))(\gamma) \right] \\
                                & + \sum_{r=1}^{\# \bv} \widetilde{\EE} \left[  D a^i_3(\gamma)|_{v=v_r} \,   \widetilde{Y}^{v_r, [\theta]}_s(\bv) + a^i_3(\gamma)|_{v=v_r}\, D (\widetilde{Y}^{v_r, [\theta]}_s(\bv)) (  \gamma) \right]  \bigg\} \, dB_s.
                        \end{split}
                \end{align}
                
                Now, taking the equation we claim is satisfied by $\partial_{\mu} Y^{x,[\theta]}_t(\bv,v')$, evaluating at $v'=\widehat{\theta}$, multiplying by $\widehat{\gamma}$, and taking expectation with respect to $\widehat{\PP}$, we can see that $\widehat{\EE}\left[ \partial_{\mu} Y^{x,[\theta]}_t(\bv,\widehat{\theta}) \widehat{\gamma}\right]$ satisfies the same equation as $D(Y_t^{x,\theta}(\bv))(\gamma)$, so by uniqueness, they are the same.    
                Similarly, computing 
                \[
                \left. \left(\partial_xY_t^{x,[\theta]}(\bv) \gamma + \widehat{\EE} \left[\partial_{\mu}Y_t^{x,[\theta]}(\bv,\widehat{\theta}) \, \widehat{\gamma} \right]\right) \right|_{x=\theta},
                \] we can see that it satisfies the same equation as $D \left( Y_t^{\theta,[\theta]}(\bv)\right) (\gamma)$.
                
                \item[4.] Equation \eqref{eq:LinearY}, fits into the standard framework for Malliavin differentiability of SDEs, since the only unkown term appearing inside the expectation with respect to $\widetilde{\PP}$ on the right hand side is $\widetilde{Y}_s^{\t \theta, [\theta]}$ does not depend on $\omega \in \Omega$. The conclusion is therefore a standard result \cite[Lemma 2.2.2]{Nualart}. The proof of the bound \eqref{eq:linearMalbd} is along the same lines as the proof of \eqref{eq:linbd}.
        \end{enumerate}
\end{proof}
We are now in a position to prove Theorem \ref{th:XisKSP}.

\begin{proof}[Proof of Theorem \ref{th:XisKSP}:]
                To ease the burden on notation, we will prove the theorem for dimension $N=1$. In this case, $\alpha$ and $\gamma$ are integers rather than multi-indices and $\bb$ is a multi-index on $ \{ 1, \ldots, \alpha \}$. We will show, by induction on $I := \alpha + |\bb|+ \gamma$, that $ \partial^{\gamma}_x \partial^{\bb}_{\bv} \partial^{\alpha}_{\mu} \Xd_t$ exists and  solves a linear equation of the form \eqref{eq:LinearY}.
        We can then use Lemma \ref{lem:linearestimate} to obtain an $L^p(\Omega)$ estimate on $ \partial^{\gamma}_x \partial^{\bb}_{\bv} \partial^{\alpha}_{\mu} \Xd_t$ at each level. In addition, we can obtain estimates on the $\DD^{m,p}$-norm of $ \partial^{\gamma}_x \partial^{\bb}_{\bv} \partial^{\alpha}_{\mu} \Xd_t$ at each level using arguments similar to the classical SDE case.
        
        We will prove by induction that the following statements hold true for  $I=1, \ldots, k$:
        
        \begin{itemize}
                \item[(S1):] For all $\alpha, \bb, \gamma$ satisfying $\alpha + |\bb| + \gamma=I$, $\partial^{\gamma}_x \partial^{\bb}_{\bv} \partial^{\alpha}_{\mu} \Xd_t(\bv)$ exists and solves a linear equation of the form \eqref{eq:LinearY}. Moreover, $\| \partial^{\gamma}_x \partial^{\bb}_{\bv} \partial^{\alpha}_{\mu} \Xd (\bv) \|_{\cS^p_T}$ is bounded independently of $(x,[\theta],\bv)$ for all $p \geq 1$.

                \item[(S2):] $ \partial^{\gamma}_x \partial^{\bb}_{\bv} \partial^{\alpha}_{\mu} \Xd_t(\bv) \in \DD^{M-I,\infty}$ and, moreover,
                \[
                \sup_{r_1, \ldots, r_{M-I-1} \in [0,T]} \EE \left[ \sup_{r_1 \vee \ldots \vee r_{M-I-1} \leq t \leq T} \left| \D^{(M-I-1)}_{r_1, \ldots, r_{M-I-1}} \, \partial^{\gamma}_x \partial^{\bb}_{\bv} \partial^{\alpha}_{\mu} \Xd_t (\bv) \right|^p \right] \leq C \,(1+|x|+\|\theta\|_2)^m,
                \]
                for all $p \geq 1$, where $m=1$ unless the coefficients $\Vs$ are bounded, in which case $m=0$.
        \end{itemize}
        
        \underline{$I=1$:}
        
         (S1):  $\partial_x \Xd_t$      and 
           $\partial_{\mu} \Xd_t(v_1)$ exists and are continuous by Proposition \ref{prop:firstderivs}. There is no derivative with respect to $v$ at this level. We can write 
           \[
           Y^{x,[\theta]}_t (v_1):= \begin{pmatrix}
           \partial_x \Xd_t \\ \partial_{\mu} \Xd_t(v_1)
           \end{pmatrix}
           \]
           in the form of equation \eqref{eq:LinearY} and identify the coeffcients:
                \begin{align*}
                        & a_0    = \begin{pmatrix}
                        1 \\  0
                        \end{pmatrix}\\
                        & a_1^{i}(s,x,[\theta]) =\begin{pmatrix}\partial V_i(\Xd_s,[\Xt_s]) & 0\\ 0 & \partial V_i(\Xd_s,[\Xt_s])        \end{pmatrix}  \\
                        & a_2^{i}(s,x,[\theta],v_1) = \begin{pmatrix}
                        0 \\  0
                        \end{pmatrix}\\
                        &a_3^{i}(s,x,[\theta],v) =\begin{pmatrix}0 & 0\\ \partial_{\mu} V_i\left(\Xd_s,[\Xt_s],\widetilde{X}^{v, [\theta]}_s \right) \mathbf{1}_{v=v_1} & \partial_{\mu} V_i\left(\Xd_s,[\Xt_s],\widetilde{X}^{v, [\theta]}_s \right) \mathbf{1}_{v \neq v_1}     \end{pmatrix}
     .           \end{align*} 
        We can now check that the assumptions of Lemma \ref{lem:linearestimate} are satisfied by the coefficients $a_1, a_2, a_3$ above to obtain a bound on  $\| Y^{x,[\theta]} (v_1) \|_{\cS^p_T}$. 
                

        Going back to the equations satisfied by  $\partial_x \Xd_t$ and $ \partial_{\mu} \Xd_t(v)$, we see that the coefficients are $(k-1)$-times differentiable with bounded Lipschitz derivatives. Nualart \cite[Theorem 2.2.2]{Nualart} immediately tells us that $\partial_x \Xd_t,  \partial_{\mu} \Xd_t \in \DD^{k-1,\infty}$. Using the bound in \eqref{eq:linearMalbd}, we get for $Y^{x,[\theta]}_t = \partial_x \Xd_t$ or $ \partial_{\mu} \Xd_t(v)$,
                \begin{align*}
                        \sup_{r \leq t} \EE \left[ \sup_{r \leq t \leq T} \left| \D_r Y_t^{x, [\theta]}(\bv) \right|^p \right]
                        &\leq C \,  \sup_{r \leq t} \EE \left[ \sup_{r \leq t \leq T} \left| \D_r a_1(s,x,[\theta]) \right|^p \right] \\
                        &\leq C \,  \sup_{r \leq t} \EE \left[ \sup_{r \leq t \leq T} \left| \partial^2 V_i(\Xd_s, [\Xt_s]) \D_r \Xd_s \right|^p \right].
                \end{align*}
                Now, $\partial^2 V_i$ is bounded and it is easy to prove that
\[
\sup_{t  \in [0,T]} \EE \left| \sup_{r \in [0,T]} \D_r X_t^{x, [\theta]} \right|^p \leq C \,(1 +|x| + \|\theta\|_2 )^{mp},       
\]      
(where $m=1$ unless the coefficients $\Vs$ are bounded, in which case $m=0$) using a similar argument to deriving the bound \eqref{eq:linbd} for the solution of a linear equation.
                So, we get the required bound on the first Malliavin derivative of $Y^{x,[\theta]}(\bv)$. For the higher order Malliavin derivatives, following the proof in \cite[Theorem 2.2.2]{Nualart}, we see that each order Malliavin derivative satisfies a linear equation. Importantly in the equation satisfied by higher-order Malliavin derivatives, the coefficient $a_1^i$ in each equation is always $\partial V_i(\Xd_s, [\Xt_s])$. From the bound on the Malliavin derivative of a general linear equation in \eqref{eq:linearMalbd}, we see that this is the only term which contribute to the estimate. Hence, the same bound holds as above for each different order Malliavin derivative. Moreover, if all of the coefficients are bounded, the estimate is uniform in $(x,[\theta],\bv)$.

        \noindent \underline{$2 \leq I \leq k$:}
        
(S1): By the induction hypothesis, for any $\alpha, \bb, \gamma$ satisfying $\alpha + |\bb| + \gamma = I$, we can write $Y^{x,[\theta]}_t(\bv):=\partial^{\gamma}_x \partial^{\bb}_{\bv} \partial^{\alpha}_{\mu} \Xd_t(\bv)$ in the form of equation \eqref{eq:LinearY}.
Now, denote 
\[
Z^{x,[\theta]}_t (\bv,v'):= \begin{pmatrix}
\partial_x Y^{x,[\theta]}_t(\bv)  \\
 \partial_{\mu} Y^{x,[\theta]}_t(\bv,v')
 \\  \partial_{v_j}Y^{x,[\theta]}_t(\bv)
\end{pmatrix}.
\]
We will write this in the form of equation \eqref{eq:LinearY} with coefficients $b_1,b_2,b_3$. Using Proposition \ref{prop:linearderivs}, we identify these coefficients as
\begin{align*}
& b_1(s,x,[\theta]) =\partial V_i(\Xd_s,[\Xt_s])  \: \text{Id}_{3} \\
& b_2(s,x,[\theta],\bv) = \begin{pmatrix}
\partial_x a^{i}_1 \: Y^{x,[\theta]}_s(\bv) + \partial_x a^{i}_2 + \widetilde{\EE} \left[ \partial_x a_3^{i}|_{v=\t \theta} \:  \widetilde{Y}^{\t \theta,[\theta]}_s(\bv) + \sum_{r=1}^{\# \bv} \partial_x a_3^{i}|_{v=v_r} \:  \widetilde{Y}^{v_r,[\theta]}_s(\bv)  \right]
\\ \partial_{\mu} a^{i}_1 \: Y^{x,[\theta]}_s(\bv) + \partial_{\mu} a^{i}_2 + \widetilde{\EE} \left[ \partial_v a_3|_{v=v'} \:  Y^{v',[\theta]}_s(\bv) + \partial_{\mu} a_3|_{v=\t \theta} \: Y^{\t \theta,[\theta]}_s(\bv) \right]
\\   \partial_{v_j} a^{i}_2 + \widetilde{\EE} \left[ \partial_{v_j} a_2^{i} \: \widetilde{Y}^{v_j,[\theta]}_s(\bv) \right]
\end{pmatrix}\\
& b_3(s,x,[\theta],v) =\begin{pmatrix}0 & 0 & 0
\\ a_3^{i}(s,x,[\theta],v) \mathbf{1}_{v=v'} & a_3^i(s,x,[\theta],v) & 0
\\a_3^{i}(s,x,[\theta],v) \mathbf{1}_{v=v_j} & 0 & a_3^{i}(s,x,[\theta],v)      \end{pmatrix}
. \end{align*} 
Now, to obtain a bound on the $\cS^p_T$-norm of $Z^{x,[\theta]} (\bv,v')$ one just has to check that the coefficients $b_1, b_2, b_3$ satisfy the assumptions of Lemma \ref{lem:linearestimate}, which is straightforward. 

\noindent (S2): This is the same as the case $I=1$.
\end{proof}

The functions belonging to the set $\KK^q_r(E,M)$ satisfy the following properties, which we make use of when developing integration by parts formulas in Section \ref{sec:IBPdecoupled}.

\begin{lemma}[Properties of local Kusuoka-Stroock processes]\label{KSP}
        The following hold 
        \begin{enumerate}
                \item Suppose $\Psi \in \KK^q_{r}(\RR,M)$ and $ \Psi$ is $\mathbb{F}$-adapted. For
                $i=1,\hdots,d$, define
                \[
                g_i(t,x,\mu ) :=\int_0^t \Psi(s,x, \mu) \, dB^i_s  \quad \textit{and}
                \quad g_0(t,x, \mu) := \int_0^t \Psi(s,x, \mu)\, ds .
                \]
                Then, for $i=1, \ldots, d$, $g_i \in \KK^q_{r+1}(\RR,M)$ and $g_0 \in \KK^q_{r+2}(\RR,M)$.
                \item If $\Psi_i \in \KK^{q_i}_{r_i}(E,M_i)$ for $i=1, \hdots, n$, then
                \[
                \prod_{i=1}^n \Psi_i \in \KK^{q_1+ \cdots + q_n}_{r_1+ \hdots + r_n}(E, \min_{i}M_i) \quad
                \textit{and} \quad \sum_{i=1}^n \Psi_i \in
                \KK^{\max_i q_i}_{\min_i r_i}(E,\min_i M_i).
                \]
                \item If $\Psi \in \KK^q_r(H_d,M)$, then $g(t,x,\mu): = \textstyle \int_0^{t} \Psi(t,x,\mu)(r) \, dr \in \KK^q_r(\RR^d,M)$. Conversely, if $ \t \Psi \in \KK^q_r(\RR^d,M)$, then $ \t g(t,x,\mu): = \t \Psi(\cdot,x,\mu) \mathbf{1}_{[0,t]}(\cdot) \in \KK_{r+1}^q(H_d,M)$.
                \item If $\Psi \in \KK^q_r(\RR,M)$, then $\D \Psi \in \KK^q_r(H_d,M-1)$.
                \item If $\Psi \in \KK^{q_1}_{r_1}(\RR,M_1)$ and $u \in \KK^{q_2}_{r_2}(H_d,M_2)$ then, $\langle \D \Psi,u \rangle_{H_d} \in \KK^{q_1+q_2}_{r_1+r_2}(\RR, (M_1-1) \wedge M_2)$.
                \item If $\Psi \in \KK^{q_1}_{r_1}(\RR^N,M_1)$ and $u \in \KK^{q_2}_{r_2}(H_{d \times N},M_2)$ is $\mathbb{F}$-adapted then, \newline $\delta \left(  u \Psi \right) \in \KK^{q_1+q_2}_{r_1+r_2}(\RR, (M_1-1) \wedge M_2)$.
                \item If $\, \Psi \in \KK^q_{r}(\RR,M)$ then, $\partial_x \Psi \in \KK^q_r(\RR, M-1)$ and $(x,v,\mu) \mapsto\partial_{\mu} \Psi(x,\mu,v)$ is a Kusuoka-Stroock process on $\RR^{2N} \times \p_2(\RR^N)$ in the class $\KK^q_r(\RR, M-1)$.
        \end{enumerate}
        
\end{lemma}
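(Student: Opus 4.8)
The plan is to treat each of the seven assertions as a direct verification of the two defining conditions of Definition \ref{KSPDefn}: existence and $L^p$-continuity in $(t,x,[\theta])$ of every parameter derivative $\partial^{\gamma}_x\partial^{\bb}_{\bv}\partial^{\alpha}_{\mu}$ of the process, and the quantitative bound \eqref{ksineq} on the corresponding $\DD^{m,p}$-norms with the advertised values of $q$, $r$ and $M$. The whole argument rests on one structural observation: the parameter derivatives $\partial_x,\partial_{\mu},\partial_{v}$ act on the deterministic arguments $(x,[\theta],\bv)$, whereas the Malliavin operations $\D,\delta$ and the integrals $\int\cdot\,dB^i$, $\int\cdot\,dr$ act on the probability variable $\omega$. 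Hence all these operations commute — this is where Lemma \ref{DiffInts} and the Malliavin chain rule are used to justify differentiation under the integral sign and under $\D$ — so that every estimate reduces to combining scalar bounds through Hölder's inequality, Minkowski's integral inequality, the Burkholder--Davis--Gundy inequality, and, for the Skorohod integral, the Meyer inequalities.

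For the algebraic parts this is immediate. Part 2 follows from the Leibniz rule applied to all three families of derivatives together with the generalised Hölder inequality: for a product the polynomial-growth exponents and the $t$-exponents add, since each factor contributes $t^{r_i/2}(1+|x|+\|\theta\|_2)^{q_i}$, while the number of available derivatives drops to $\min_i M_i$; for a sum the triangle inequality gives growth exponent $\max_i q_i$, and, because on the bounded interval $(0,T]$ the most singular power dominates, the $t$-exponent is fixed at $\min_i r_i$. Part 7 is nothing but the definition read one order lower: $\partial_x\Psi$ and $\partial_{\mu}\Psi$ inherit all the estimates of $\Psi$ with the budget $M$ reduced by one, the only point being that $\partial_{\mu}\Psi$ now carries the extra variable $v$, so it is naturally viewed as a Kusuoka--Stroock process on the enlarged space $\RR^{2N}\times\p_2(\RR^N)$, the required uniformity in $v$ being supplied by the $\sup_{\bv}$ in \eqref{ksineq}.

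For part 1 I would differentiate the two integrals under the integral sign (justified by the $L^p$-continuity and bounds on the integrand) and apply Lemma \ref{DiffInts} to compute their Malliavin derivatives, obtaining $g_i,g_0\in\DD^{M,\infty}$ with iterated derivatives again given by stochastic and Lebesgue integrals, so that the differentiability index $M$ is preserved. The $t$-scaling is the only nontrivial bookkeeping: Burkholder--Davis--Gundy applied to $\int_0^t\Psi\,dB^i$ produces $(\int_0^t\|\Psi(s)\|^2\,ds)^{1/2}\sim(\int_0^t s^{r}\,ds)^{1/2}\sim t^{(r+1)/2}$, raising the index by one, whereas Minkowski's integral inequality applied to $\int_0^t\Psi\,ds$ gives $\int_0^t s^{r/2}\,ds\sim t^{(r+2)/2}$, raising it by two. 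Parts 3--5 are then short: part 4 is the definition at order $M-1$, using $\|\D\Psi\|_{\DD^{m,p}(H_d)}\le\|\Psi\|_{\DD^{m+1,p}}$ and the commutation of $\D$ with the parameter derivatives; part 3 relates $H_d$- and $\RR^d$-valued processes by integration against $\mathbf 1_{[0,t]}$, Cauchy--Schwarz transferring the $\DD^{m,p}$-bounds with the $t$-index adjusted as stated, the factor $\sqrt t$ of the indicator accounting for the increase in the converse direction; and part 5 combines part 4 with the product estimate of part 2, the pairing $\ip{\D\Psi}{u}_{H_d}$ being handled like a product, whence the exponents $q_1+q_2$, $r_1+r_2$ and the order $(M_1-1)\wedge M_2$.

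The main obstacle is part 6, the Skorohod integral $\delta(u\Psi)$. The crude route — placing $u\Psi$ in $\KK^{q_1+q_2}_{r_1+r_2}(H_d,M_1\wedge M_2)$ by part 2 and applying the Meyer inequality $\|\delta(w)\|_{\DD^{m,p}}\le C\,\|w\|_{\DD^{m+1,p}(H_d)}$ (see \cite{Nualart}), after checking that the parameter derivatives pass through $\delta$ — loses one degree of Malliavin regularity from the shared budget $M$ of Definition \ref{KSPDefn} and yields only the order $(M_1\wedge M_2)-1$. To reach the sharper order $(M_1-1)\wedge M_2$ claimed, I would instead use the divergence product rule $\delta(\Psi u)=\Psi\,\delta(u)-\ip{\D\Psi}{u}_{H_d}$ together with the hypothesis that $u$ is $\mathbb F$-adapted: by Remark \ref{adjointito}, $\delta(u)$ is then an Itô integral, so parts 1 and 3 apply and keep $\delta(u)$ in the class without loss of differentiability index, giving $\Psi\,\delta(u)\in\KK(\cdot,M_1\wedge M_2)$ by part 2, while the correction term $\ip{\D\Psi}{u}_{H_d}$ lies in $\KK(\cdot,(M_1-1)\wedge M_2)$ by part 5; since $(M_1-1)\wedge M_2\le M_1\wedge M_2$, the sum is exactly of the stated order. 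Tracking this budget, and verifying once and for all that the measure- and auxiliary-variable derivatives genuinely commute with $\D$ and $\delta$, is the only place where the argument requires real care; everything else is a routine combination of the standard inequalities.
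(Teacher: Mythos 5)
Your proposal is correct. For the record, the paper does not actually prove this lemma: its ``proof'' is the single sentence that the results are straightforward generalisations of those in \cite{KusStrIII} and \cite{crisan2010cubature}, so what you have written is precisely the argument being deferred to. You have also correctly isolated the one genuinely delicate point, part 6: the naive route through Meyer's inequality applied to $u\Psi\in\KK^{q_1+q_2}_{r_1+r_2}(H_d,M_1\wedge M_2)$ only yields order $(M_1\wedge M_2)-1$, which is strictly weaker than the stated $(M_1-1)\wedge M_2$ whenever $M_2<M_1$; your use of the divergence product rule $\delta(\Psi u)=\Psi\,\delta(u)-\ip{\D\Psi}{u}_{H_d}$ together with Remark \ref{adjointito} (adaptedness of $u$ turns $\delta(u)$ into an It\^o integral, handled by BDG with no loss of Malliavin index) is exactly the device needed to recover the sharp order, and it is consistent with how the operators $I^1_\alpha$, $\I^1_\alpha$ are actually estimated in the proof of Proposition \ref{prop:defwieghts}. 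One cosmetic caveat, inherited from the paper's own statement rather than introduced by you: the $t$-bookkeeping in part 1 (and in the converse direction of part 3) implicitly requires $\int_0^t s^{r}\,ds<\infty$, i.e.\ $r>-1$, even though Definition \ref{KSPDefn} allows arbitrary $r\in\RR$; in all applications in the paper $r\geq 0$, so nothing is lost.
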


\begin{proof}
        These results are straightforward generalisations of results in \cite{KusStrIII} and \cite{crisan2010cubature}.
\end{proof}

Now, we show that certain processes, which will make up the Malliavin weights in our integration by parts formulas, belong to specific Kusuoka-Stroock classes. The arguments make extensive use of the properties of generic Kusuoka-Stroock processes on $\RR^N \times \p_2(\RR^N)$ in Lemma \ref{KSP}.     

        \begin{proposition}
                \label{prop:KSprocesses}
                If $\Vs \in \C^{k,k}_{b,Lip}(\RR^N \times \p_2(\RR^N);\RR^N)$ and (UE) holds, then the following are true:
                \begin{enumerate}
                        \item Let $|\alpha|=1$, and $ \,\Phi_1 = \sigma^{\top} (\sigma \sigma^{\top})^{-1} (X^{x,[\theta]}_{.}, [X_.^{\theta}]) \partial^{\alpha}_x X^{x,[\theta]}_. \mathbf{1}_{[0,t]}(\cdot)$. Then, $\Phi_1   \in \KK^2_1(H_{d},k-1)$ and if $\Vs$ are uniformly bounded then $\Phi_1 \in \KK^0_1(H_{d},k-1)$.
                        \item For all $i,j \in \{1, \ldots, N\}$,
                        $(\partial_x X^{x,[\theta]}_t)^{-1}_{i,j} \in \KK^1_0(\RR,k-2)$ and if $\Vs$ are uniformly bounded then $(\partial_x X^{x,[\theta]}_t)^{-1}_{i,j} \in \KK^0_0(\RR,k-2)$.
                        \item $(\partial_x X^{x,[\theta]}_t)^{-1}  \partial_{\mu} \Xd_t  \in \KK_{0}^{2}(\RR^{N \times N},k-2)$ and if $\Vs$ uniformly bounded then $(\partial_x X^{x,[\theta]}_t)^{-1}  \partial_{\mu} \Xd_t  \in \KK_{0}^{0}(\RR^N,k-2)$.
                \end{enumerate}
        \end{proposition}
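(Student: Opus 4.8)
The plan is to deduce all three statements from Theorem~\ref{th:XisKSP}, which places $X^{x,[\theta]}$ together with all of its derivatives up to order $k$ in the class $\KK^1_0(\RR^N,k)$ (respectively $\KK^0_0(\RR^N,k)$ when the $V_i$ are bounded), together with the algebraic calculus of Kusuoka--Stroock processes collected in Lemma~\ref{KSP}. In particular, by Lemma~\ref{KSP}(7) a single differentiation costs one level of smoothness, so $\partial^{\alpha}_x X^{x,[\theta]}\in\KK^1_0(\RR^N,k-1)$ for $|\alpha|=1$ and $\partial_{\mu} X^{x,[\theta]}\in\KK^1_0(\RR^N,k-1)$, the exponent $q=1$ arising only from the linear growth of the Malliavin derivative $\D X^{x,[\theta]}$ in the general case (and $q=0$ when the coefficients are bounded). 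The role of \textbf{(UE)} is to control the matrix $\sigma^{\top}(\sigma\sigma^{\top})^{-1}$ entering part~1.

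For part~1, the key observation is that, although $\sigma$ may grow linearly, the matrix $P(z,\mu):=\sigma^{\top}(\sigma\sigma^{\top})^{-1}(z,\mu)$ is \emph{bounded with bounded derivatives of every order}: by \textbf{(UE)} one has $\|P\|^2=\|P^{\top}P\|=\|(\sigma\sigma^{\top})^{-1}\|\le\epsilon^{-1}$, and using $\partial(\sigma\sigma^{\top})^{-1}=-(\sigma\sigma^{\top})^{-1}\partial(\sigma\sigma^{\top})(\sigma\sigma^{\top})^{-1}$ every derivative of $P$ can be rewritten so that each factor of $\sigma$ is paired either into $P$ itself or into the orthogonal projection $\sigma^{\top}(\sigma\sigma^{\top})^{-1}\sigma$ of norm $\le 1$; since $V_i\in\C^{k,k}_{b,Lip}$ all occurring derivatives of $\sigma$ are bounded, so the apparent growth cancels. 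A standard chain-rule argument (iterating Lemma~\ref{KSP}(2),(7)) then gives $P(X^{x,[\theta]}_{\cdot},[X^{\theta}_{\cdot}])\in\KK^1_0(\RR^{d\times N},k)$, with $q=1$ coming only from the Malliavin derivatives of $X$. Multiplying by $\partial_x X^{x,[\theta]}\in\KK^1_0(\RR^N,k-1)$ via Lemma~\ref{KSP}(2) lands in $\KK^2_0$, and cutting off with $\mathbf{1}_{[0,t]}$ to regard the result as an $H_d$-valued process raises the $t$-exponent by one through Lemma~\ref{KSP}(3), yielding $\Phi_1\in\KK^2_1(H_d,k-1)$; in the bounded case every exponent above is $0$ and we obtain $\KK^0_1(H_d,k-1)$.

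For part~2 I would control the inverse Jacobian $K_t:=(\partial_x X^{x,[\theta]}_t)^{-1}$ by noting that, since $J_t:=\partial_x X^{x,[\theta]}_t$ solves the linear equation~\eqref{eq:jac}, It\^o's formula for the inverse of a matrix-valued semimartingale shows $K_t$ solves a linear equation of the type~\eqref{eq:LinearY} whose coefficients are built from $\partial V_i(X^{x,[\theta]}_{\cdot},[X^{\theta}_{\cdot}])$ and $(\partial V_i)^2$. This already delivers uniform $L^p$ bounds on $K_t$ of all orders (so $J_t$ is uniformly invertible in $L^p$), and feeding this equation into Lemma~\ref{lem:linearestimate} and Proposition~\ref{prop:linearderivs}, exactly as in the proof of Theorem~\ref{th:XisKSP}, shows $K$ is a Kusuoka--Stroock process. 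Equivalently, differentiating $J_tK_t=\mathrm{Id}_N$ gives $\partial K=-K(\partial J)K$; since a single $x$- or $\mu$-derivative of $K$ already brings in a \emph{second}-order derivative $\partial^2 X^{x,[\theta]}\in\KK^1_0(\RR^N,k-2)$ of the flow, the available order of differentiability drops to $k-2$, so $K\in\KK^1_0(\RR,k-2)$ (and $\KK^0_0(\RR,k-2)$ when the $V_i$ are bounded).

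Part~3 is then immediate: $\partial_{\mu}X^{x,[\theta]}_t\in\KK^1_0(\RR^N,k-1)$ by Theorem~\ref{th:XisKSP}, and multiplying by $K_t\in\KK^1_0(\RR,k-2)$ from part~2 via the product rule Lemma~\ref{KSP}(2) gives $(\partial_x X^{x,[\theta]}_t)^{-1}\partial_{\mu}X^{x,[\theta]}_t\in\KK^2_0(\RR^{N\times N},k-2)$, reducing to $\KK^0_0$ in the bounded case. The main obstacle is part~2: one must verify the \emph{uniform} $L^p$-invertibility of the Jacobian and track carefully how each differentiation of the inverse consumes one extra derivative of the flow, which is what forces the order down to $k-2$. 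By contrast the mildly surprising boundedness of $\sigma^{\top}(\sigma\sigma^{\top})^{-1}$ in part~1, though essential for obtaining $q=2$ rather than a larger exponent, follows cleanly from the projection structure once \textbf{(UE)} is invoked.
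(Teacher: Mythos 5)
Your proposal is correct and follows essentially the same route as the paper: part 1 composes the (UE)-controlled matrix $\sigma^{\top}(\sigma\sigma^{\top})^{-1}$ with the flow and multiplies by $\partial_x X^{x,[\theta]}$ via the calculus of Lemma \ref{KSP}, part 2 writes the linear SDE for the inverse Jacobian and feeds it into the machinery behind Theorem \ref{th:XisKSP}, and part 3 is the product rule applied to the outputs of Theorem \ref{th:XisKSP} and part 2. Your observation that $\sigma^{\top}(\sigma\sigma^{\top})^{-1}$ is actually bounded with bounded derivatives (the paper only invokes boundedness of $(\sigma\sigma^{\top})^{-1}$ and linear growth of the product) is a correct but inessential sharpening, since the stated exponent $q=2$ already accommodates the paper's cruder bound.
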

        
        \begin{proof}
                
                1. First, note that from Assumption \ref{ass:UE}, it follows that the matrix $(\sigma \sigma^{\top})^{-1}(x, \mu)$ has a an operator norm bounded uniformly in $(x, \mu)$. Therefore $\sigma^{\top} (\sigma \sigma^{\top})^{-1} (\cdot, \cdot) $ has linear growth. Also, its elements are $k$-times differentiable in $(x, [\theta])$, so $\sigma^{\top} (\sigma \sigma^{\top})^{-1} (X^{x,[\theta]}_{t}, [X_t^{\theta}]) \in \KK_0^1(\RR^{d \times N}, k)$. When $|\alpha|=1$, $\partial^{\alpha}_x X^{x,\mu}_t \in \KK_0^1(\RR^N, k-1)$ by part 7 of Lemma \ref{KSP}, so the product
                $\sigma^{\top} (\sigma \sigma^{\top})^{-1} (X^{x,[\theta]}_{t}, [X_t^{\theta}]) \partial^{\alpha}_x X^{x,\mu}_t \in \KK_1^2(\RR^d, k-1)$. 
                Hence, by Lemma \ref{KSP} part 3., $\Phi_1  \in \KK^2_1(H_{d},k)$. 
                
                2.  $(\partial_x X^{x ,[\theta]}_t)^{-1}$ satisfies the following linear equation
                \begin{align}
                        \label{eq:jacinv}
                        (\partial_x \Xd_t)^{-1} &= \text{Id}_N  - \sum_{i=1}^d \int_0^t (\partial_x \Xd_s)^{-1} \, \partial V_i \left(\Xd_s, [\Xt_s ] \right)   \, dB^i_s \\
                        \nonumber       & \phantom{XXX} - \int_0^t (\partial_x \Xd_s)^{-1} \, \partial \bar{V}_0 \left(\Xd_s, [\Xt_s ] \right)   \, ds,
                \end{align}
                where $\bar{V}_0 =V_0 -  \textstyle \frac{1}{2} \sum_{j=1}^d \partial V_j V_j$. This can be seen by applying It\^{o}'s formula to the product $(\partial_x X^{x ,[\theta]}_t)^{-1} \partial_x \Xd_t$. 
                The proof of Theorem \ref{th:XisKSP} works just as well for this equation. The only thing to note is that the above equation contains second derivatives of the vector fields. This leads to the conclusion $(\partial_x \Xd_t)^{-1} \in \KK^1_0(\RR^{N \times N},k-2)$.
                
                3. To prove the claim, it is enough to note $(\partial_x X^{x,\mu}_t)^{-1} \in \KK_0^1(\RR^{N \times N}, k-2)$ from part 2 of this lemma and $\partial_{\mu} \Xd_t \in \KK_{0}^1(\RR^{N \times N },k-1)$, which comes from Lemma \ref{KSP} part 7.
                
        \end{proof}
        
        We can now prove Proposition \ref{prop:defwieghts}.
        
        \begin{proof}[Proof of Proposition \ref{prop:defwieghts}]
                \begin{enumerate}
                        \item[$I^1_{\alpha}$:]
                        First, fix $|\alpha|=1$.
                        We want to apply Lemma \ref{KSP} part 6. with $f=\Psi$ and $u = \left(\sigma^{\top} (\sigma \sigma^{\top})^{-1} (X^{x,\mu}_{.}, [X_.^{\theta}]) \partial_x X^{x,\mu}_.\right)_{\alpha} \mathbf{1}_{[0,t]} $. We recall Proposition \ref{prop:KSprocesses} part 1. to see that $u  \in \KK^2_1(H_{d},k-1)$ or $\KK^0_1(H_{d},k-1)$ if $V_i$ is uniformly bounded, which proves that
                        \[
                        \delta \left(r \mapsto \Psi(t,x, [\theta]) \,   \left( \sigma^{\top} (\sigma \sigma^{\top})^{-1} (X^{x,\mu}_r, [X_r^{\theta}]) \partial_x X^{x,\mu}_r \right)_{\alpha} \right) \in \KK^{q+2}_{r+1}(\RR,(k \wedge n)-1)
                        \]
                        (or $\KK^q_{r+1}(\RR,k-1)$ if $V_i$ is bounded) and hence, dividing by $\sqrt{t}$, we get that $I^1_{\alpha}(\Psi) \in \KK^{q+2}_r(\RR,(k \wedge n)-1)$ for $|\alpha|=1$. For $|\alpha|>1$, we iterate this argument and get $I^1_{\alpha}(\Psi) \in \KK^{q+2|\alpha|}_r(\RR,(k \wedge n)-|\alpha|)$.
                        
                        \item[$I^2_{\alpha}$:]
                        We recall from Proposition \ref{prop:KSprocesses} part 2. that:
                        For all $i,j \in \{1, \ldots, N\}$,
                        $(\partial_x X^{x,\mu}_t)^{-1}_{i,j} \in \KK^1_0(\RR,k-2)$ and if $V_i$ are uniformly bounded, $(\partial_x X^{x,\mu}_t)^{-1}_{i,j} \in \KK^0_0(\RR,k-2)$.
                        So, the product $(\partial_x X^{x,\mu}_t)^{-1}_{j,i}  \Psi(t,x, [\theta]) \in \KK^{q+1}_r(\RR,n \wedge (k-2))$ and hence the sum  $\textstyle\sum_{j=1}^N (\partial_x X^{x,\mu}_t)^{-1}_{j,i}  \Psi(t,x, [\theta]) \in \KK^{q+1}_r(\RR,n \wedge (k-2))$.  When the vector fields are uniformly bounded,
                        \[ \sum_{j=1}^N (\partial_x X^{x,\mu}_t)^{-1}_{j,i}  \Psi(t,x, [\theta]) \in \KK^q_r(\RR,n \wedge (k-2)).\]
                        Hence, by applying $I^1$ to these terms and using the first result of this proposition, we get that $I^2_{(i)}(\Psi) \in \KK^{q+3}_r(\RR,[n \wedge (k-2)]-1)$. For $|\alpha|>1$, we iterate this argument and get $I^2_{\alpha}(\Psi) \in \KK^{q+3|\alpha|}_r(\RR,[n \wedge (k-2)]-|\alpha|)$.
                        
                        \item[$I^3_{\alpha}$:]
                        Note that $\sqrt{t} \partial^i\Psi(t,x, [\theta]) \in \KK^q_{r+1}(\RR,n-1)$ so that $I^1_{(i)}(\Psi) + \sqrt{t} \partial^i\Psi \in \KK^{q+2}_r(\RR, (n \wedge k)-1) $ . For $|\alpha|>1$, we iterate this argument and get $I^3_{\alpha}(\Psi) \in \KK^{q+2|\alpha|}_r(\RR,(k \wedge n)-|\alpha|)$.
                        
                        \item[$\I^1_{\alpha}$:]
                        We recall from Proposition \ref{prop:KSprocesses} that $(\partial_x X^{x,\mu}_t)^{-1}  \partial_{\mu} \Xd_t  \in \KK_{0}^{2}(\RR^{N \times N},k-2)$, so $(\partial_x X^{x,\mu}_t)^{-1} \partial_{\mu} \Xd_t \Psi(t,x, [\theta]) \in \KK_{r}^{q+2}(\RR^{N \times N},n \wedge (k-2))$, then we apply Lemma \ref{KSP} part 6. with $u = \left(\sigma^{\top} (\sigma \sigma^{\top})^{-1} (X^{x,\mu}_{.}, [X_.^{\theta}]) \partial_x X^{x,\mu}_.\right)_{\alpha} \mathbf{1}_{[0,t]} $ which is in $\KK^2_1(H_{d},k-1)$ as before, and $f:= (\partial_x X^{x,\mu}_t)^{-1}  \partial_{\mu} \Xd_t   \Psi(t,x, [\theta]) \in \KK_{r}^{q+2}(\RR^{N \times N},n \wedge (k-2))$. So $\delta(uf) \in \KK^{q+4}_{r+1}(\RR;[n \wedge (k-2)-1])$. Hence, $ \I^1_{\alpha}(\Psi) \in \KK^{q+4}_{r}(\RR;[n \wedge (k-2)-1])$. 
                        For $|\alpha|>1$, we iterate this argument and get $\I^1_{\alpha}(\Psi) \in \KK^{q+4|\alpha|}_r(\RR,[n \wedge (k-2) ]-|\alpha|)$.
                        
                        \item[$\I^3_{\alpha}$:]
                        Note that $\sqrt{t} \partial_{\mu}\Psi(v) \in \KK^q_{r+1}(\RR^{N \times N},n-1)$ so that \newline $\I^1_{\gamma_1}(\Psi)(v) +( \partial_{\mu}\Psi(v))_{\beta_1} \in \KK^{q+4|\alpha|}_r(\RR,[n \wedge (k-2) ]-|\alpha|)$ .
                \end{enumerate}
        \end{proof}

\bibliography{mybiblio}

\end{document}